\newtheorem{teo}{Theorem}[section]
\newtheorem{prop}[teo]{Proposition}
\newtheorem{lem}[teo]{Lemma}
\newtheorem{conj}[teo]{Conjecture}
\newtheorem{defini}[teo]{Definition}
\newtheorem{rem}[teo]{Remark}
\newcommand{\GL}{{\mathbf{GL}}}
\newcommand{\Sh}{{\rm Sh}}
\newcommand{\Gal}{{\rm Gal}}
\newcommand{\Res}{{\rm Res}}
\newcommand{\der}{{\rm der}}
\newcommand{\ad}{{\rm ad}}
\newcommand{\FF}{{\mathbb F}}
\newcommand{\CC}{{\mathbb C}}
\newcommand{\RR}{{\mathbb R}}
\newcommand{\ZZ}{{\mathbb Z}}
\newcommand{\QQ}{{\mathbb Q}}
\newcommand{\NN}{{\mathbb N}}
\newcommand{\GG}{{\mathbb G}}
\newcommand{\AAA}{{\mathbb A}}
\newcommand{\un}{{\rm un}}
\newcommand{\wt}{\widetilde}
\title[Degrees of maximal special subvarieties]{Effective estimates for the degrees of maximal special subvarieties}
\author{Christopher Daw}
\address{Daw: Department of Mathematics and Statistics, University of Reading,
    White\-knights,  PO Box 217,  Reading,  Berkshire RG6 6AH,  United Kingdom}
\email{chris.daw@reading.ac.uk}
\author{Ariyan Javanpeykar}
\address{Javanpeykar: Institut f\"ur Mathematik, Johannes Gutenberg-Universit\"at, Staudingerweg 9, 55128 Mainz, Germany}
\email{peykar@uni-mainz.de}
\author{Lars K\"uhne}
\address{K\"uhne: Departement Mathematik und Informatik, Spiegelgasse 1, 4051 Basel, Switzerland}
\email{lars.kuehne@unibas.ch}
\subjclass[2010]{14G35, 11G18, 14Q20}
\keywords{Shimura varieties, Andr\'e-Oort conjecture, effectivity, degrees}
\begin{document}
\maketitle

\begin{abstract}
Let $Z$ be an algebraic subvariety of a Shimura variety. We extend results of the first author to prove an effective upper bound for the degree of a non-facteur maximal special subvariety of $Z$. 
\end{abstract}

\section{Introduction}

The motivation for this paper has been the following two conjectures. 

\begin{conj}[Andr\'e-Oort Conjecture]\label{AO}
Let $Z$ be a subvariety of a Shimura variety $S$. Then $Z$ contains only finitely many maximal special subvarieties.
\end{conj}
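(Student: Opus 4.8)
The conjecture is now a theorem; the strategy, introduced by Pila and Zannier, runs as follows, and it is the approach I would take. A first, classical reduction (Ullmo; Klingler--Yafaev; Ullmo--Yafaev) brings the general case to that of special \emph{points}: since a Shimura variety $S$ carries only finitely many conjugacy classes of Shimura subdata, any infinite family of maximal special subvarieties of $Z$ may, after passing to a subfamily, be assumed to consist of Hecke translates of a single sub-datum, and such translates are themselves parametrised by the special points of an auxiliary Shimura variety; thus it suffices to show that $Z$ contains only finitely many special points lying on no positive-dimensional special subvariety of $Z$. One reduces at once to the case $S=\Gamma\backslash X$ with $X$ a bounded Hermitian symmetric domain and $\Gamma$ an arithmetic subgroup.

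The core of the argument works with the uniformization $\pi\colon X\to S$ and a semialgebraic fundamental set $\mathcal{F}\subset X$ on which $\pi$ is definable in $\RR_{\mathrm{an,exp}}$ (Peterzil--Starchenko). Two deep ingredients enter. The first is the hyperbolic Ax--Lindemann theorem (Pila; Ullmo--Yafaev; Klingler--Ullmo--Yafaev): the Zariski closure in $S$ of $\pi(Y)$, for any irreducible algebraic $Y\subset X$, is weakly special; equivalently, if $Z$ is not special then $\pi^{-1}(Z)\cap\mathcal{F}$ contains no positive-dimensional semialgebraic set of the shape cut out by a proper pre-special subdatum. The second, and decisive, is a lower bound $\#\bigl(\Gal(\overline{\QQ}/E)\cdot s\bigr)\gg_{\varepsilon}\Delta(s)^{\delta}$ for the Galois orbit of a special point $s$ over a field of definition $E$, where $\Delta(s)$ is a complexity parameter (e.g.\ the discriminant of the attached CM order) and $\delta>0$. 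Granting these: a pre-image $\tilde{s}\in\mathcal{F}$ of a special point can be chosen with height polynomially bounded in $\Delta(s)$, so infinitely many special points in $Z$ would, via the Galois bound, yield many more algebraic points of bounded degree and height on $\pi^{-1}(Z)\cap\mathcal{F}$ than the Pila--Wilkie counting theorem allows to lie off its algebraic part; hence $\pi^{-1}(Z)\cap\mathcal{F}$ contains a positive-dimensional semialgebraic subset of the excluded type, and Ax--Lindemann forces all but finitely many of those special points to lie on a positive-dimensional special subvariety of $Z$, a contradiction.

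The main obstacle is the Galois lower bound. For $S=\mathcal{A}_g$ this is Tsimerman's theorem: one bounds the Faltings height of a CM abelian variety polynomially in the discriminant of its CM order, using the averaged Colmez conjecture (Andreatta--Goren--Howard--Madapusi Pera; Yuan--Zhang) together with Masser--W\"ustholz-style isogeny estimates, and then converts this into a lower bound on the degree of the field of moduli via Brauer--Siegel. For Shimura varieties of abelian type one transports the estimate along the standard correspondences with $\mathcal{A}_g$, while the general case instead rests on the more recent unconditional height bounds for special points coming from the theory of $G$-functions and period estimates. By contrast the functional-transcendence and point-counting steps, though substantial, are by now routine, and essentially all of the difficulty is concentrated in this arithmetic input.
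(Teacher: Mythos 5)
Conjecture~\ref{AO} is stated here purely as motivation; the paper does not prove it, and at the time of writing it was known unconditionally only for Shimura varieties of abelian type (and under GRH in general). The paper's actual result, Theorem~\ref{maintheorem}, is an effective upper bound on the degree of a non-facteur maximal special subvariety of $Z$, which is a \emph{complement} to André--Oort rather than a proof of it. Your sketch is an accurate high-level account of the Pila--Zannier strategy (reduction to special points, o-minimal definability of the uniformization on a fundamental set, hyperbolic Ax--Lindemann, Pila--Wilkie counting) together with Tsimerman's Galois-orbit lower bound via the averaged Colmez formula, and the later $G$-function input for the general case; but none of this appears in the present paper, which deliberately follows a different route. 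The authors bound degrees of positive-dimensional special subvarieties from \emph{below} by a product of ``bad'' primes via Prasad's volume formula (Theorem~\ref{mainbound}), and then run a cutting-out iteration with Hecke correspondences (Theorems~\ref{hecke} and~\ref{criterion}, Proposition~\ref{induction}) to bound degrees of non-facteur maximal special subvarieties from above. What this buys over your proposal is effectivity: the Pila--Wilkie counting at the heart of Pila--Zannier is inherently ineffective, whereas every constant in the paper's argument is explicitly computable. Conversely, your proposal (granting its inputs) actually establishes finiteness, which the paper's degree bound does not by itself: one still needs to know that there are only finitely many non-facteur special subvarieties of bounded degree, and the paper records this as an open problem (Remark~\ref{NFrem}). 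The two arguments are thus complementary rather than competing proofs of the same statement.
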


By a maximal special subvariety of $Z$, we refer to a special subvariety of $S$ contained in $Z$ that is not properly contained in another special subvariety of $S$ also contained in $Z$.

We denote by $\mathcal{A}_g$ the moduli space of principally polarized abelian varieties of dimension $g$, which is a Shimura variety. We denote by $\mathcal{T}_g\subseteq\mathcal{A}_g$ the Torelli locus, that is, the Zariski closure of the image $\mathcal{T}^\circ_g$ of the Torelli morphism from the moduli space $\mathcal{M}_g$ of non-singular projective curves of genus $g$ to $\mathcal{A}_g$, which sends (the isomorphism class of) a curve to (the isomorphism class of) its Jacobian.

\begin{conj}[Cf. \cite{oort:AO}, \S5]\label{oort}
For $g$ sufficiently large, $\mathcal{T}_g$ contains no positive-dimensional special subvarieties that intersect $\mathcal{T}^\circ_g$.
\end{conj}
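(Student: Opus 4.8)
Conjecture~\ref{oort} is a well-known open problem, due to Coleman and Oort, and the outline below is a strategy rather than a complete proof: already for $g \le 7$ one knows many positive-dimensional special subvarieties of $\mathcal{T}_g$ meeting $\mathcal{T}^\circ_g$ (families of cyclic covers of $\PP^1$, Prym constructions, and so on), so any argument must genuinely use that $g$ is large.

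The plan is to combine our effective degree bound with the geometry of the Torelli locus. First I would record the reduction. Since the monodromy of the universal curve over $\mathcal{M}_g$ is all of $\Sp_{2g}$, the generic Jacobian has Mumford--Tate group $\GSp_{2g}$ and trivial endomorphism ring, so the smallest special subvariety of $\mathcal{A}_g$ containing $\mathcal{T}_g$ is $\mathcal{A}_g$ itself, whose adjoint Shimura datum is $\QQ$-simple. Consequently a positive-dimensional maximal special subvariety $V$ of $\mathcal{T}_g$ is non-facteur, and our main theorem bounds its degree $\deg_\lambda V$ in terms of $\deg_\lambda \mathcal{T}_g$, where $\lambda$ is the Hodge line bundle on $\mathcal{A}_g$. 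A maximal special subvariety of $\mathcal{T}_g$ that is of product type up to isogeny --- hence possibly facteur --- corresponds to curves whose Jacobian is isogenous to a product, and would be treated separately by descent to the relevant smaller Shimura varieties.

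Next I would bound $\deg_\lambda \mathcal{T}_g$ explicitly in terms of $g$. This reduces to intersection theory on $\overline{\mathcal{M}}_g$, since the pullback of $\lambda$ under the Torelli morphism is the Hodge class on $\overline{\mathcal{M}}_g$, whose self-intersections and interaction with the boundary divisors are governed by the standard tautological relations; this yields a bound explicit in $g$. Substituting it into our main theorem produces an explicit function $B(g)$ with $\deg_\lambda V \le B(g)$ for every positive-dimensional maximal special subvariety $V \subseteq \mathcal{T}_g$ meeting $\mathcal{T}^\circ_g$. Since the special subvarieties of $\mathcal{A}_g$ of $\lambda$-degree at most $B(g)$ form a bounded and effectively listable family --- finitely many sub-Shimura data up to conjugacy, and for each only finitely many Hecke translates of bounded degree --- the conjecture is, for each fixed $g$, reduced to a finite verification.

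The hard part --- and the reason this is a proposal, not a proof --- is to promote ``finite for each $g$'' to ``empty for $g$ large.'' This requires a lower bound, growing with $g$ and ultimately beating $B(g)$, for the $\lambda$-degree (or some complexity invariant) of any positive-dimensional special subvariety that can lie inside $\mathcal{T}_g$; such a bound cannot come from the Andr\'e--Oort machinery and must be extracted from the geometry of the Torelli locus. The most promising input is the second fundamental form of $\mathcal{T}^\circ_g \hookrightarrow \mathcal{A}_g$: a totally geodesic --- in particular, special --- subvariety of $\mathcal{A}_g$ contained in $\mathcal{T}^\circ_g$ forces this form to degenerate along it, a phenomenon which work of Colombo--Frediani--Ghigi and Ciliberto--van der Geer--Teixidor shows to be highly restrictive, and one would try to turn their local curvature computations into an effective, $g$-uniform bound. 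A complementary phenomenon to exploit is that $\mathcal{T}_g$ has codimension $\tfrac12(g-2)(g-3)$ in $\mathcal{A}_g$, growing quadratically in $g$, so that for $g$ large the Torelli locus becomes too thin to contain any of the special subvarieties that the bound $B(g)$ still allows. Establishing such an estimate rigorously and uniformly in $g$ is where the essential difficulty lies.
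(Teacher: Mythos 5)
The statement you were asked to ``prove'' is Conjecture~\ref{oort}, which the paper presents precisely as a conjecture of Coleman and Oort: it is an open problem, and the paper offers no proof of it. The paper's main result, Theorem~\ref{maintheorem}, is a tool that is \emph{motivated by} this conjecture (and by Andr\'e--Oort), not a resolution of it. You correctly recognize this from the outset, and your response is an honest strategy sketch rather than a purported proof, which is the only defensible thing to do here.

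On the substance of the sketch: the overall shape matches what the authors themselves gesture at in the introduction, namely that one can combine Theorem~\ref{maintheorem} with an explicit upper bound on $\deg_\lambda \mathcal{T}_g$ (they cite Grushevsky for exactly this) to get an explicit bound $B(g)$ on degrees of non-facteur maximal special subvarieties of $\mathcal{T}_g$, and you correctly identify the genuinely hard missing piece: a $g$-uniform lower bound (from the geometry of the Torelli embedding, e.g.\ via the second fundamental form or the growing codimension) that eventually beats $B(g)$. That is indeed the crux and it is open. One small logical slip worth flagging: the step ``the generic Mumford--Tate group is $\GSp_{2g}$, hence $\mathcal{A}_g$ is the smallest special subvariety containing $\mathcal{T}_g$, \emph{consequently} every positive-dimensional maximal special subvariety of $\mathcal{T}_g$ is non-facteur'' is a non sequitur. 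Whether $V$ is non-facteur is a property of the centralizer of $\mathbf{H}^{\der}$ inside $\mathbf{G}$ for the subdatum $(\mathbf{H}, X_{\mathbf{H}})$ defining $V$, and this is not controlled by the $\QQ$-simplicity of the ambient group; facteur special subvarieties (e.g.\ of product type $\mathcal{A}_{g_1}\times\{A_2\}$) can certainly sit inside a $\QQ$-simple Shimura variety. You do acknowledge the product-type case as needing separate treatment, but the word ``consequently'' overstates the reduction: Theorem~\ref{maintheorem} as stated simply does not apply to the facteur special subvarieties, and handling them is an additional gap, not a routine descent.
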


Note that, for $S=\mathcal{A}_g$, Conjecture \ref{AO} is also raised in \cite{oort:AO}. A major motivation for both conjectures was the following question of Coleman. Let $g \geq 4$ be an arbitrary integer.

\vspace{0.2cm}
\textbf{Coleman's problem.}
\textit{Are there only finitely many isomorphism classes of non-singular projective curves of genus $g$ having CM Jacobian?}
\vspace{0.2cm}

The isomorphism classes above correspond bijectively to special points in $\mathcal{T}^\circ_g$. Therefore, Conjecture \ref{AO} in the case $S=\mathcal{A}_g$, which is now a theorem due to Tsimerman \cite{tsimerman:AO}, reduces Coleman's problem to the following question. 

\vspace{0.2cm}
\textit{Does $\mathcal{T}_g$ contain positive-dimensional special subvarieties intersecting $\mathcal{T}^\circ_g$?}
\vspace{0.2cm}

To be precise, a \textit{negative} answer to this question would imply a \textit{positive} answer to Coleman's problem. However, it has been answered positively for each low genus $g \in \{4,5,6,7\}$. That is, Coleman's problem is false for $g \in \{4,5,6,7\}$.
Nevertheless, other results have indicated that Conjecture \ref{oort} may still hold (see \cite{zuo1, zuo2}, for example). For an excellent survey of the topic, we refer the reader to \cite{Moonen2013}.

The Andr\'e-Oort conjecture was first proved under the generalised Riemann hypothesis (GRH) by Klingler, Ullmo, and Yafaev \cite{uy:andre-oort},\cite{KY:AO}. It has since been proved unconditionally for Shimura varieties of abelian type (and, in particular, $\mathcal{A}_g$) by Pila and Tsimerman \cite{pt:axlindemann}, \cite{tsimerman:AO}, using the so-called Pila-Zannier method \cite{Pila2008}. These results are, however, not effective, and hence give no quantitative control on maximal special subvarieties. In fact, effective results of Andr\'e-Oort type are rather sparse \cite{Allombert2015, Bilu2016, Bilu2017, Bilu2018, BMZ:AO, kuhne:ao, Wuestholz2014}. In this article, we complement these results by giving effective upper bounds on the degrees of certain maximal special subvarieties. More precisely, we treat the so-called non-facteur special subvarieties, which are precisely those for which we expect to find only finitely many of bounded degree (see Remark \ref{NFrem}). Establishing the latter, in an effective fashion, is a theme of current work. We note that, in a recent preprint, Binyamini \cite{Binyamini2018} has obtained similar results in a product of modular curves using differential algebraic geometry. 

Throughout this article, we take degrees with respect to the Baily-Borel line bundle (see Section \ref{BBLB} for details).

\begin{teo}\label{maintheorem}
Let $S$ be a Shimura variety satisfying the assumptions described in Section \ref{statement}, and let $Z$ be an algebraic subvariety of $S$. Then there exists an effectively computable constant $c=c(\dim(Z),\deg(Z))$ such that, if $V$ is a non-facteur maximal special subvariety of $Z$, then the degree of $V$ is at most $c$.
\end{teo}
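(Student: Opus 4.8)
The plan is to follow the strategy of the first author's earlier work on degrees of special subvarieties, which reduces a bound on the degree of a special subvariety $V$ of $S$ to a bound on arithmetic data attached to $V$ — namely the discriminant of the (reductive part of the) generic Mumford--Tate group of $V$, together with complexity invariants of the associated Shimura datum and of a representative point in the relevant period domain. Concretely, one writes $V$ as the image of a sub-Shimura-datum $(H,X_H)\hookrightarrow (G,X)$ and controls $\deg(V)$ in terms of $\disc(H)$ and the degree of the field of definition, using the comparison between the Baily--Borel metric and heights on the Shimura variety.

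First I would set up the parametrisation: since $S$ satisfies the hypotheses of Section 2 (so that one has the necessary uniformity over all sub-data), every special subvariety of $S$ arises from a finite list of ``types'' of semisimple $\QQ$-groups, each appearing in a family twisted by the adjoint group. For a maximal special subvariety $V\subseteq Z$, one exploits maximality together with the \emph{non-facteur} condition: the non-facteur hypothesis guarantees that $V$ is not cut out by a proper factor of the ambient group, which is exactly the case in which one expects — and can prove — an \emph{a priori} bound on the arithmetic complexity of $V$, rather than an infinite family of bounded-degree examples (cf.\ Remark 1.3). The key geometric input is that $Z$ has bounded degree, so by a Bezout/intersection-theoretic argument the special subvariety $V$, being an irreducible component of the intersection of $Z$ with a suitable totally geodesic (hence algebraic, of controlled degree) subvariety, cannot have arbitrarily large degree without forcing $Z$ itself to be special or to contain a positive-dimensional family of special subvarieties of the same type — which maximality then rules out.

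The main technical step, and the place where I expect the real difficulty to lie, is converting this ``soft'' dichotomy into an \emph{effective} statement. This requires: (i) an effective lower bound for $\deg(V)$ in terms of the discriminant $\disc(H)$ of the generic Mumford--Tate group, obtained by relating the Baily--Borel degree to the covolume of an arithmetic lattice and invoking effective reduction theory / an effective version of the relevant isogeny estimates; and (ii) an effective upper bound showing that if $\disc(H)$ were too large, then $V$ would be ``too flat'' to be a \emph{maximal} subvariety inside a fixed-degree $Z$ — i.e.\ one could enlarge $V$ within $Z$. Step (ii) is essentially an effective version of the Pila--Zannier-type counting input, and replacing the ineffective o-minimal counting theorem by an explicit estimate (available in the cases covered by the hypotheses of Section 2, e.g.\ via the work of Binyamini and others on effective point-counting) is the crux. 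Once both bounds are in place, combining them yields $\disc(H)\le c_1(\dim Z,\deg Z)$, and then the standard effective estimates on Shimura data of bounded discriminant (finiteness of the relevant Galois orbits, effective bounds on fields of definition) give $\deg(V)\le c(\dim Z,\deg Z)$ with $c$ effectively computable, completing the proof.
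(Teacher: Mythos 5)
Your step (i) — a lower bound for $\deg V$ in terms of the arithmetic complexity of the generic Mumford--Tate group $\mathbf{H}$, obtained by comparing the Baily--Borel degree to a covolume and applying Prasad's volume formula — matches the paper's Theorem \ref{mainbound} in spirit, although the relevant invariant there is $\Pi_V$ (a product of ``bad'' primes attached to $\mathbf{H}^\der$ and its level structure), not a discriminant per se. But the route to the upper bound in your step (ii), and the geometric set-up around it, is not the paper's and would not work as written. Intersecting $Z$ with a ``totally geodesic (hence algebraic, of controlled degree) subvariety'' is circular: the totally geodesic algebraic subvarieties of a Shimura variety are exactly the special subvarieties, and bounding their degrees is the problem being solved. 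The paper instead intersects $Z$ with its own Hecke translate $T_\alpha(Z)$ at a well-chosen prime $p\notin\Sigma_V$, whose degree is controlled by $p$ and $\deg Z$ via Bezout and the projection formula; this is Proposition \ref{induction}, built on the Hecke machinery of Theorem \ref{hecke} and the geometric criterion of Theorem \ref{criterion}.

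More importantly, the effectivity does \emph{not} come from replacing ineffective o-minimal point counting by a Binyamini-style effective estimate. No Pila--Zannier counting appears anywhere in the paper's proof; Binyamini's work is mentioned in the introduction only as an independent parallel result in a special case (products of modular curves). The actual mechanism is the Edixhoven--Klingler--Yafaev Hecke-correspondence technique: one iterates the cutting step $Z\mapsto Y\subsetneq Z$ with $\deg Y\leq p^{2f+k}(\deg Z)^2$, at most $\dim Z-2$ times, giving an upper bound on $\deg V$ in terms of $p$, $\dim Z$ and $\deg Z$; combining this with the lower bound $\deg V\geq c_1\Pi_V^\delta$ from Theorem \ref{mainbound} and the Prime Number Theorem produces a suitable small prime $p\notin\Sigma_V$ and closes the argument. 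Without the Hecke-correspondence cutting step, your plan has no effective route to the required upper bound on $\deg V$.
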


The assumptions referred to are modest: we assume that the group $\mathbf G$ defining $S$ is semisimple of adjoint type and that the associated compact open subgroup $K$ of $\mathbf G(\AAA_f)$ is equal to a product of compact open subgroups $K_p\subset \mathbf G(\QQ_p)$. 

To obtain Theorem \ref{maintheorem}, we first generalize earlier work of the first author \cite{daw:degrees}, in which lower bounds for the degrees of so-called strongly special subvarieties were given. In this paper, we obtain a lower bound for the degree of any positive-dimensional special subvariety, in terms of a group-theoretic product of primes (see Theorem \ref{mainbound}). The main tool is again Prasad's volume formula for $S$-arithmetic quotients of semisimple groups \cite{prasad:volumes}.

The article \cite{daw:degrees} gave a new proof of a finiteness theorem originally obtained by Clozel and Ullmo concerning strongly special subvarieties (see \cite{CU}, Theorem 1.1). This theorem was subsequently generalized by Ullmo to non-facteur special subvareties (see \cite{ullmo:equidistribution}, Theorem 1.3), and the main result of this paper is a significant step towards an effective proof of that result (as mentioned above, it remains to give an effective procedure for obtaining the (conjecturally) finitely many non-facteur special subvarieties of bounded degree.

The purpose of \cite{daw:degrees} was to give a new proof of the Andr\'e-Oort conjecture under GRH, combining results contained therein with the arithmetic side of the Klingler-Ullmo-Yafaev approach. Indeed, this was achieved via the technique involving Hecke correspondences that was initially conceived by Edixhoven and substantially generalized by Klingler and Yafaev. In this article, we generalize the results obtained in \cite{daw:degrees} (see Theorems \ref{hecke} and \ref{criterion}), but, rather than applying them in the previous manner, in which one takes a set of special subvarieties and incrementally increases the dimension of its members, we proceed slightly differently, using Hecke correspondences to perform a cutting-out procedure.

Non-facteur special subvarieties were defined in \cite{ullmo:equidistribution}. They are those special subvarieties that do not arise in infinite families: a special subvariety $V$ of a Shimura variety $S$ is called non-facteur if there exists no finite morphism of Shimura varieties $S_1\times S_2\rightarrow S$, with $S_2$ having positive dimension, such that $V$ is equal to the image of $S_1\times\{z\}$ in $S$ for any (necessarily special) point $z\in S_2$. The reader might consider the difference between the image of the modular curve $Y_0(N)$ in $\CC^2$ and the fibre $\CC\times\{z\}$ for a CM point $z\in\CC$; the former is non-facteur, the latter is not. 

We emphasise that, in the case of $S=\mathcal{A}_g$, there exist effectively computable degree bounds for $\mathcal{T}_g$ (see, for example, \cite[(6.8)]{Grushevsky2004}), so Theorem \ref{maintheorem} can indeed be used to produce explicit bounds for the degrees of non-facteur maximal special subvarieties of $\mathcal{T}_g$.

\section*{Acknowledgements}
The first author would like to thank the EPSRC and the Institut des Hautes Etudes Scientifiques for granting him a William Hodge fellowship, during which he first began working on this topic. He would like to thank the EPSRC again, as well as Jonathan Pila, for the opportunity to be part of the project Model Theory, Functional Transcendence, and Diophantine Geometry. He would like the University of Reading for its ongoing support. He would like to both the second and third authors, and their institutions, for invitations to visit. 

The second author gratefully acknowledges support from SFB/Transregio 45. He would also like to thank Manfred Lehn and Kang Zuo for helpful discussions.

The third author acknowledges support from the Swiss National Science Foundation through an Ambizione grant (no. 168055). He also would like to thank Philipp Habegger for discussions and encouragement.

The authors would like to collectively thank Stefan M\"uller-Stach for having originally suggested that they work on this problem, and for many helpful discussions. They also thank the referee for several insightful comments and suggestions.

\section{Preliminaries}

First we establish some general conventions.

\subsection{Fields} For a number field $F$ and a place $v$ of $F$, we let $F_v$ denote the completion of $F$ with respect to $v$. In particular, for a rational prime $p$, we let $\QQ_p$ denote the $p$-adic numbers. We denote by $\AAA_f$ the finite rational ad\`eles.

\subsection{Groups} For an algebraic group ${\mathbf G}$, we denote by ${\mathbf G}^\circ$ the connected component of ${\mathbf G}$ containing the identity. We denote by ${\mathbf G}^\der$ the derived subgroup of ${\mathbf G}$ and by ${\mathbf G}^\ad$ the quotient of ${\mathbf G}$ by its center. By the rank of ${\mathbf G}$, we refer to the dimension of a maximal torus of ${\mathbf G}$.

For an algebraic subgroup ${\mathbf H}$ of ${\mathbf G}$, we denote by $\mathbf{Z(H)}$ the center of ${\mathbf H}$ and by ${\mathbf Z}_{\mathbf G}({\mathbf H})$ the centralizer of $\mathbf H$ in $\mathbf G$. When $\mathbf H$ is defined over $\QQ$ or $\RR$, we let $\mathbf H(\RR)^+$ denote the connected component of $\mathbf H(\RR)$ containing the identity, and we let $\mathbf H(\RR)_+$ denote the inverse image of $\mathbf H^{\ad}(\RR)^+$ in $\mathbf H(\RR)$. We let $\mathbf H(\QQ)^+$ denote $\mathbf H(\QQ)\cap \mathbf H(\RR)^+$ and we let $\mathbf H(\QQ)_+$ denote $\mathbf H(\QQ)\cap \mathbf H(\RR)_+$. If $\mathbf H$ is finite, we denote the (finite) cardinality of $\mathbf H(\CC)$ by $|\mathbf H|$.

\subsection{Representations}

Let $\mathbf G$ be a reductive algebraic group over a $\QQ$ and fix a faithful representation $\rho: \mathbf G\rightarrow\GL_n$. Given these data, we will often identify $\mathbf G$ and its algebraic subgroups with their images in $\GL_n$. For any $\QQ$-subgroup $\mathbf H$ of $\mathbf G$ and any prime $p$, we let $\mathbf{H}_{\ZZ_p}$ denote the Zariski closure of $\mathbf H$ in $\GL_{n,\ZZ_p}$. For a rational prime $p$ and a subtorus $\mathbf T$ of $\mathbf G_{\QQ_p}$, we denote by $X^*(\mathbf T)$ the character module of $\mathbf T$ (that is, the free $\ZZ$-module of finite rank comprising the homomorphisms $\mathbf T_{\overline{\QQ}_p}\rightarrow\GG_{m,\overline{\QQ}_p}$, where $\overline{\QQ}_p$ denotes an algebraic closure of $\QQ_p$). We say that a character $\chi\in X^*(\mathbf T)$ intervenes in $\rho$ if there exists a non-zero subspace $V_\chi\subseteq\overline{\QQ}^n_p$ such that $\rho(t)v=\chi(t)v$ for any $v\in V_\chi$ and any $t\in \mathbf T(\overline{\QQ}_p)$. 

\subsection{Neat subgroups} 

We briefly recall the definition of neat subgroups. References are Chapter 17 in \cite{borel:arithmetic-groups} and Section 4.1.4 in \cite{KY:AO}. Let $\mathbf G$ be a linear algebraic subgroup defined over $\QQ$ and $\rho: \mathbf G \rightarrow \GL_n$ a faithful representation. For an element $g$ of $\mathbf G(\QQ)$ (resp.\ $\mathbf G(\QQ_p)$), we write  $\mathrm{Eig}(\rho(g))$ for the subgroup generated in $\overline{\QQ}$ (resp.\ $\overline{\QQ}_p$) by the eigenvalues of $\rho(g)$. For $g \in \mathbf G(\QQ_p)$, we furthermore set
\begin{equation*}
\mathrm{Eig}_{\mathrm{alg}}(\rho(g)) = \{ \alpha \in \mathrm{Eig}(\rho(g)) \ | \ \text{$\alpha$ is algebraic over $\QQ$} \}.
\end{equation*}
We then say that an element $g \in \mathbf G(\QQ)$ (resp.\ $g \in \mathbf G(\QQ_p)$) is neat if $\mathrm{Eig}(\rho(g))$ (resp.\ $\mathrm{Eig}_{\mathrm{alg}}(\rho(g))$) is torsion-free. An element $g = (g_p) \in \mathbf G(\mathbb{A}_f)$ is called neat if for each integer $N>1$ there exists a rational prime $p$ such that $\mathrm{Eig}_{\mathrm{alg}}(\rho(g)_p)$ contains no element of (exact) order $N$. These notations are independent of the choice of $\rho$. 

A subgroup $\Gamma\subset \mathbf G(\QQ)$ or $K \subset \mathbf G(\mathbb{A}_f)$ is said to be \emph{neat} if every element of $\Gamma$ or $K$, respectively, is neat.

\subsection{Products of primes}
\label{subsection_primes}

Let $\mathbf G$ be a reductive algebraic group over $\QQ$ and let $K\subset \mathbf G(\AAA_f)$ be a compact open subgroup equal to the product of compact open subgroups $K_p\subset \mathbf G(\QQ_p)$. We denote by $\Sigma(\mathbf G)$ the set of all primes $p$ such that, either 
\begin{itemize}
\item $\mathbf G_{\QQ_p}$ is not quasi-split (equivalently, $\mathbf{G}_{\QQ_p}$ does not contain a $\QQ_p$-Borel subgroup), or
\item $\mathbf G_{\QQ_p}$ is quasi-split, but does not split over an unramified extension of $\QQ_p$.
\end{itemize}
We denote by $\Sigma(\mathbf G,K)$ the union of $\Sigma(\mathbf G)$ and the set of primes such that
\begin{itemize}
\item $\mathbf G_{\QQ_p}$ is quasi-split, splits over an unramified extension of $\QQ_p$, but $K_p$ is not hyperspecial (equivalently, there is no smooth reductive $\ZZ_p$-group scheme $\mathcal G$ such that ${\mathcal{G}}_{\QQ_p}= \mathbf G_{\QQ_p}$ and ${\mathcal G}(\ZZ_p)=K_p$). 
\end{itemize}

As explained in \cite{daw:degrees}, Section 6, we have $K_p=\mathbf G_{\ZZ_p}(\ZZ_p)$ for almost all $p$, from which it follows that $\Sigma(\mathbf{G},K)$ is a finite set of primes. We will denote by $\Pi(\mathbf{G},K)$ the product of the primes contained in $\Sigma(\mathbf{G},K)$, which we define to be $1$ if $\Sigma(\mathbf{G},K)$ happens to be empty. Sometimes we will insist that $K$ be neat (as defined above). 


\subsection{Degrees}

Let $S$ be a projective variety and let $Z\subseteq S$ be an irreducible subvariety. Henceforth, by a  subvariety, we will refer to an irreducible subvariety unless stated otherwise (for example, if preceded by the word Shimura). For any line bundle $L$ on $S$, we will define the degree $\deg_LZ$ of $Z$ with respect to $L$ as usual (as in \cite{Fulton1998}, for example; see also \cite{KY:AO}, Section 5.1). We will make frequent use of the projection formula (see \cite{Fulton1998}, Proposition 2.5 (c); see also \cite{KY:AO}, Section 5.1).

\section{Shimura varieties} 

We assume that the reader is somewhat familiar with the theory of Shimura varieties, and we recall only the definitions and facts essential for our arguments. We refer to \cite{milne:intro} for more details.

Let $(\mathbf{G},X)$ denote a Shimura datum and let $K$ be a compact open subgroup of $\mathbf{G}(\AAA_f)$. The double quotient space
\begin{align}\label{doublecoset}
\mathbf{G}(\QQ)\backslash X\times (\mathbf{G}(\AAA_f)/K)
\end{align}
is the analytification of a complex quasi-projective algebraic variety that we will denote $\Sh_K(\mathbf{G},X)$. (Recall that $\Sh_K(\mathbf{G},X)$ possesses a model defined over a number field $E:=E(\mathbf{G},X)$, but we will not need this fact.) Note that the action of $\mathbf{G}(\QQ)$ on the product in (\ref{doublecoset}) is the diagonal one. A variety of the form $\Sh_K(\mathbf{G},X)$ is called a Shimura variety. 

In general, the Shimura variety $\Sh_K(\mathbf{G},X)$ is not connected. However, if we choose a connected component $X^+$ of $X$, then the image $S_K(\mathbf{G},X^+)$ of $X^+\times\{1\}$ in $\Sh_K(\mathbf{G},X)$ is a connected component of $\Sh_K(\mathbf{G},X)$. Note that $S_K(\mathbf{G},X^+)$ is canonically isomorphic to $\Gamma\backslash X^+$, where $\Gamma:=\mathbf{G}(\QQ)_+\cap K$.

\subsection{Hecke correspondences}

Let $K'\subset \mathbf{G}(\AAA_f)$ be a compact open subgroup contained in $K$. The natural map on double coset spaces comes from a finite morphism
\begin{align*}
\pi_{K',K}:\Sh_{K'}(\mathbf{G},X)\rightarrow\Sh_K(\mathbf{G},X).
\end{align*}
If $\alpha\in \mathbf{G}(\AAA_f)$, then the map on double coset spaces induced by $(x,g)\mapsto(x,g\alpha)$ (where $x\in X$ and $g\in \mathbf{G}(\AAA_f)$) also comes from an isomorphism
\begin{align*}
[\alpha]:\Sh_K(\mathbf{G},X)\rightarrow\Sh_{\alpha^{-1}K\alpha}(\mathbf{G},X).
\end{align*}
We let $T_\alpha$ denote the map on algebraic cycles of $\Sh_K(\mathbf{G},X)$ coming from the algebraic correspondence
\begin{align*}
\Sh_K(\mathbf{G},X)\leftarrow\Sh_{K\cap \alpha K\alpha^{-1}}(\mathbf{G},X)\rightarrow \Sh_{\alpha^{-1}K\alpha\cap K}(\mathbf{G},X)\rightarrow\Sh_K(\mathbf{G},X),
\end{align*}
where the left and right outer arrows are $\pi_{K\cap \alpha K\alpha^{-1},K}$ and $\pi_{\alpha^{-1}K\alpha\cap K,K}$, respectively, and the middle arrow is $[\alpha]$. We refer to the map $T_\alpha$ as a Hecke correspondence.

\subsection{The Baily-Borel line bundle}\label{BBLB}

By \cite{bb:compactification}, Lemma 10.8, the line bundle of holomorphic forms of maximal degree on $X$ descends to $\Sh_K(\mathbf{G},X)$ and extends uniquely to an ample line bundle $L_K$ on the Baily-Borel compactification $\overline{\Sh_K(\mathbf{G},X)}$ of $\Sh_K(\mathbf{G},X)$. We refer to $L_K$ as the Baily-Borel line bundle on $\overline{\Sh_K(\mathbf{G},X)}$. Given a subvariety $Z$ of $\Sh_K(\mathbf{G},X)$ and its Zariski closure $\overline{Z}$ in $\overline{\Sh_K(\mathbf{G},X)}$, we will write $\deg_{L_K}Z$ instead of $\deg_{L_K}\overline{Z}$. In fact, we will simply write $\deg Z$ if it is clear to which line bundle we are referring.

As stated in \cite{KY:AO}, Proposition 5.3.2 (1), if $K'\subset \mathbf{G}(\AAA_f)$ is a compact open subgroup contained in $K$, then the pullback $\pi_{K',K}^*L_K$ of $L_K$ is equal to $L_{K'}$. 

\subsection{Special subvarieties}\label{section:spsub}

For any $\QQ$-subgroup $\mathbf H$ of $\mathbf G$, we obtain a compact open subgroup $\mathbf H(\AAA_f)\cap K$ of $\mathbf{H}(\AAA_f)$, which we denote $K(\mathbf H)$. If $(\mathbf H,X_{\mathbf H})$ is a Shimura subdatum of $(\mathbf G,X)$, the natural map
\begin{align*}
\iota_{\mathbf{H}}:\Sh_{K(\mathbf H)}(\mathbf H,X_{\mathbf H})\rightarrow\Sh_K(\mathbf G,X),
\end{align*}
on double coset spaces extends to an algebraic morphism on the Baily-Borel compactifications. In particular, it is a closed immersion.
We refer to any irreducible component of the image of such a map as a Shimura subvariety of $\Sh_K(\mathbf G,X)$. If $V$ is a Shimura subvariety of $\Sh_K(\mathbf G,X)$ and $\alpha\in \mathbf G(\AAA_f)$, we refer to any irreducible component of the algebraic cycle $T_\alpha(V)$ as a special subvariety of $\Sh_K(\mathbf G,X)$. (By abuse of notation, we identify cycles with their underlying support.)

\begin{lem}\label{independence}
Let $V$ be a special subvariety of $\Sh_K(\mathbf G,X)$. Then there exists a Shimura subdatum $(\mathbf H,X_{\mathbf H})$ of $(\mathbf G,X)$, a connected component $X^+_{\mathbf H}$ of $X_{\mathbf H}$, and $\alpha\in G(\AAA_f)$ such that $V$ is equal to the image of $X^+_{\mathbf H}\times\{\alpha\}$ in $\Sh_K(\mathbf{G},X)$. Furthermore, we can choose $(\mathbf H,X_{\mathbf H})$ such that $\mathbf H$ is the generic Mumford-Tate group of $X_{\mathbf{H}}$. That is, such that $\mathbf H$ is the smallest $\QQ$-subgroup of $\mathbf G$ such that every element of $X_{\mathbf H}$ factors through ${\mathbf H}_\RR$.
\end{lem}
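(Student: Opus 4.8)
The plan is to unwind the definitions of a special subvariety and of a Hecke correspondence, to observe that a Hecke correspondence merely translates the $\mathbf G(\AAA_f)$-coordinate, and then to replace the ambient group by its generic Mumford--Tate group. By definition, $V$ is an irreducible component of $T_\alpha(W)$ for some $\alpha \in \mathbf G(\AAA_f)$ and some Shimura subvariety $W$ of $\Sh_K(\mathbf G,X)$, and $W$ is itself an irreducible component of the image of $\iota_{\mathbf H'}$ for some Shimura subdatum $(\mathbf H',X_{\mathbf H'})$ of $(\mathbf G,X)$. Since $\iota_{\mathbf H'}$ is a closed immersion and $\Sh_{K(\mathbf H')}(\mathbf H',X_{\mathbf H'})$ is the disjoint union of its connected components, each of which is normal and hence irreducible, the variety $W$ is the image in $\Sh_K(\mathbf G,X)$ of $X_{\mathbf H'}^+\times\{h_0\}$ for a connected component $X_{\mathbf H'}^+$ of $X_{\mathbf H'}$ and an element $h_0 \in \mathbf H'(\AAA_f) \subseteq \mathbf G(\AAA_f)$.

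Next I would compute $T_\alpha(W)$. Put $K_1 = K \cap \alpha K\alpha^{-1}$ and $K_2 = \alpha^{-1}K\alpha \cap K$, so that on algebraic cycles $T_\alpha$ is obtained by pulling back along $\pi_{K_1,K}$, transporting along the isomorphism $[\alpha]$, and pushing forward along $\pi_{K_2,K}$. The projections $\pi_{K_1,K}$ and $\pi_{K_2,K}$ leave the $X$-coordinate unchanged, whereas $[\alpha]$ sends a $\mathbf G(\AAA_f)$-coordinate $g$ to $g\alpha$. Fixing a finite coset decomposition $K = \bigsqcup_j k_j K_1$, a direct computation on double coset spaces shows that $\pi_{K_1,K}^{-1}(W)$ is the union of the images of the $X_{\mathbf H'}^+\times\{h_0 k_j\}$ in $\Sh_{K_1}(\mathbf G,X)$; applying $[\alpha]$ and then $\pi_{K_2,K}$, the support of the cycle $T_\alpha(W)$ is the union over $j$ of the images of the $X_{\mathbf H'}^+\times\{h_0 k_j\alpha\}$ in $\Sh_K(\mathbf G,X)$. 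Each such image is the image of the connected set $X_{\mathbf H'}^+$, hence irreducible (and Zariski closed), so the irreducible components of $T_\alpha(W)$ are among them. In particular, $V$ is the image of $X_{\mathbf H'}^+\times\{\beta\}$ in $\Sh_K(\mathbf G,X)$ for $\beta := h_0 k_j\alpha \in \mathbf G(\AAA_f)$ and a suitable $j$.

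It remains to arrange that the defining group equals its own generic Mumford--Tate group. Let $\mathbf H$ be the smallest $\QQ$-subgroup of $\mathbf H'$ through which every element of $X_{\mathbf H'}^+$ factors, equivalently the generic Mumford--Tate group of $X_{\mathbf H'}^+$. By standard facts about Shimura data, $\mathbf H$ is reductive; for any $x_0 \in X_{\mathbf H'}^+$, the $\mathbf H(\RR)$-conjugacy class $X_{\mathbf H}$ of $x_0$ makes $(\mathbf H,X_{\mathbf H})$ a Shimura subdatum of $(\mathbf G,X)$, the Shimura axioms being inherited from those of $(\mathbf H',X_{\mathbf H'})$ by restriction (one uses here that $\mathbf H_\RR$ is stable under $\ad(x_0(i))$); and $X_{\mathbf H'}^+$ is a connected component $X_{\mathbf H}^+$ of $X_{\mathbf H}$, because every point of $X_{\mathbf H'}^+$ lies in the $\mathbf H(\RR)^+$-orbit of $x_0$. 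Setting $\alpha := \beta$, we conclude that $V$ equals the image of $X_{\mathbf H}^+\times\{\alpha\}$ in $\Sh_K(\mathbf G,X)$, and $\mathbf H$ is now, by construction, the smallest $\QQ$-subgroup of $\mathbf G$ through which every element of $X_{\mathbf H}$ factors, as required.

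I expect the step requiring the most care to be the last one: checking that replacing $\mathbf H'$ by its generic Mumford--Tate group $\mathbf H$ neither changes the connected component $X_{\mathbf H'}^+$ nor destroys the Shimura-datum axioms. This is standard (compare \cite{milne:intro} and the discussion of special subvarieties in \cite{KY:AO}); by contrast, the Hecke-correspondence computation of the second paragraph, though a little lengthy, is entirely formal.
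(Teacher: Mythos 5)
Your proposal takes a genuinely different route from the paper's. The paper picks a Hodge generic point $v\in V$, lifts it to $(x,\alpha)\in X\times\mathbf{G}(\AAA_f)$, sets $\mathbf{H}=\MT(x)$, invokes \cite{ullmo:equidistribution}, Lemme 3.3 to see that $(\mathbf{H},\mathbf{H}(\RR)x)$ is a Shimura subdatum, and then concludes by the minimality of the special subvariety through a Hodge generic point. You instead unwind the definition of a special subvariety as a component of $T_\alpha(W)$, compute $T_\alpha$ explicitly on the double coset space via a finite coset decomposition of $K$, and then pass from the ambient group $\mathbf{H}'$ to its generic Mumford--Tate group $\mathbf{H}$. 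The Hecke computation in your second paragraph is correct and in fact more informative than the paper's black-box argument, since it exhibits the $\mathbf{G}(\AAA_f)$-coordinate $\beta = h_0k_j\alpha$ explicitly.

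The soft spot is the one you already flag: the assertion that ``$X_{\mathbf H'}^+$ is a connected component $X_\mathbf H^+$ of $X_\mathbf H$, because every point of $X_{\mathbf H'}^+$ lies in the $\mathbf H(\RR)^+$-orbit of $x_0$.'' The clause after ``because'' is precisely the nontrivial claim $\mathbf H'(\RR)^+x_0 = \mathbf H(\RR)^+x_0$, not a justification of it. The argument it stands for is roughly: $\mathbf H$, being the smallest $\QQ$-subgroup through which all $x$ factor, is invariant under conjugation by the Zariski-dense subgroup $\mathbf H'(\QQ)$, hence normal in $\mathbf H'$; the Shimura axiom for $(\mathbf H',X_{\mathbf H'})$ then forces $\mathbf H$ to surject onto each $\QQ$-simple factor of $\mathbf H'^\ad$, so $\mathbf H\supseteq\mathbf H'^\der$; consequently $\mathbf H'(\RR)^+=\mathbf H(\RR)^+\cdot\mathbf{Z(\mathbf H')}^\circ(\RR)^+$, while $\mathbf{Z(\mathbf H')}^\circ(\RR)$ centralizes $x_0(\SSS)$ and so fixes $x_0$. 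The same minimality of $\mathbf H$ is what secures the Shimura axiom SV3 for $(\mathbf H,X_\mathbf H)$, which your parenthetical ``(one uses here that $\mathbf H_\RR$ is stable under $\ad(x_0(i))$)'' does not by itself address. Since this is the step the whole argument turns on, it should be spelled out rather than waved at; the paper's route through ``smallest special subvariety containing a Hodge generic point'' sidesteps the need to match the two domains directly (though the paper's own line ``both of dimension $\dim X_\mathbf H$'' implicitly uses a comparable fact).
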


\begin{proof}
The lemma is a modest generalization of \cite{uy:andre-oort}, Lemma 2.1. Hence, we imitate the proof of the latter.

Let $v\in V$ be a Hodge generic point of $V$ and let $(x,\alpha)\in X\times\mathbf G$ be a point lying above $v$. Let $\mathbf H$ be the Mumford-Tate group of $x$, let $X_{\mathbf H}=\mathbf H(\RR)x$, and let $X^+_{\mathbf H}=\mathbf H(\RR)^+x$ be the connected component of $X_{\mathbf H}$ containing $x$. Then $(\mathbf H,X_{\mathbf H})$ is a Shimura subdatum of $(\mathbf G,X)$ (see \cite{ullmo:equidistribution}, Lemme 3.3) and so the image $V'$ of $X^+_{\mathbf H}\times\{\alpha\}$ in $\Sh_K(\mathbf G,X)$ is a special subvariety containing $v$. As $v$ is Hodge generic in $V$, it follows that $V$ is the smallest special subvariety of $\Sh_K(\mathbf G,X)$ containing $v$. Therefore, $V\subseteq V'$. Since $V$ and $V'$ are irreducible and both of dimension $\dim X_{\mathbf H}$, we conclude that $V=V'$.
\end{proof}

\begin{defini}
For $V$, $(\mathbf H,X_{\mathbf H})$, $X^+_{\mathbf H}$, and $\alpha\in G(\AAA_f)$ as above, we say that $V$ is defined by $(\mathbf H,X_{\mathbf H})$, $X^+_{\mathbf H}$, and $\alpha\in G(\AAA_f)$, and we say that $V$ is associated with $(\mathbf H,X_{\mathbf H})$. In particular, these notions include the condition that $\mathbf H$ is the generic Mumford-Tate group of $X_{\mathbf{H}}$. By \cite{uy:andre-oort}, Lemma 2.1,  when $V$ is contained in $S_K(\mathbf G,X^+)$, we may assume that $X^+_{\mathbf H}$ is contained in $X^+$ and $\alpha=1$.  In this situation, we will say that $V$ is a special subvariety of $S_K(\mathbf G,X^+)$, and we will say that $V$ is defined by $(\mathbf H,X_{\mathbf H})$ and $X^+_{\mathbf H}$.
\end{defini}

\begin{lem}\label{Qconj}
Suppose that $V$ is a special subvariety of $\Sh_K(\mathbf G,X)$ associated with $(\mathbf H_1,X_{\mathbf H_1})$ and also with $(\mathbf H_2,X_{\mathbf H_2})$. Then $\mathbf H_1=g\mathbf H_2g^{-1}$ for some $g\in\mathbf G(\QQ)$.
\end{lem}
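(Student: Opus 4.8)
The statement to prove is Lemma \ref{Qconj}: if a special subvariety $V$ is associated with both $(\mathbf H_1, X_{\mathbf H_1})$ and $(\mathbf H_2, X_{\mathbf H_2})$, then $\mathbf H_1 = g\mathbf H_2 g^{-1}$ for some $g \in \mathbf G(\QQ)$. The essential point is that, by the convention fixed in the definition, $\mathbf H_i$ is the \emph{generic Mumford-Tate group} of $X_{\mathbf H_i}$, i.e.\ the Mumford-Tate group of a Hodge-generic point of $X_{\mathbf H_i}$. So I want to reduce everything to the behaviour of Mumford-Tate groups of points.

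\textbf{Step 1: pull back to $X$.} Choose $\alpha_i \in \mathbf G(\AAA_f)$ and a connected component $X^+_{\mathbf H_i}$ so that $V$ is the image of $X^+_{\mathbf H_i} \times \{\alpha_i\}$, as in Lemma \ref{independence}. Pick a point $v \in V$ that is Hodge-generic in $V$, and for $i=1,2$ choose a point $(x_i, \alpha_i) \in X^+_{\mathbf H_i} \times \{\alpha_i\}$ lying above $v$. Since $x_i$ is Hodge generic in $X^+_{\mathbf H_i}$ (the generic point of $V$ lifts to a generic point of the component), and $\mathbf H_i$ is by hypothesis the generic Mumford-Tate group of $X_{\mathbf H_i}$, we get that $\mathbf H_i = \mathrm{MT}(x_i)$, the Mumford-Tate group of $x_i$.

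\textbf{Step 2: relate $x_1$ and $x_2$.} Both $(x_1, \alpha_1)$ and $(x_2, \alpha_2)$ map to the same point $v \in \Sh_K(\mathbf G, X)$, which means they lie in the same $\mathbf G(\QQ)$-orbit (acting diagonally) on $X \times \mathbf G(\AAA_f)/K$. Hence there is $g \in \mathbf G(\QQ)$ with $g x_1 = x_2$ (and $g\alpha_1 K = \alpha_2 K$, which we do not need). Now Mumford-Tate groups transform equivariantly: for $g \in \mathbf G(\QQ)$, one has $\mathrm{MT}(g x_1) = g\, \mathrm{MT}(x_1)\, g^{-1}$, because $h \in X$ factors through a $\QQ$-subgroup $\mathbf N \subseteq \mathbf G$ iff $gh$ (i.e.\ $\mathrm{inn}(g) \circ h$) factors through $g\mathbf N g^{-1}$, and the Mumford-Tate group is the smallest such subgroup. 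Combining, $\mathbf H_2 = \mathrm{MT}(x_2) = \mathrm{MT}(g x_1) = g\,\mathrm{MT}(x_1)\,g^{-1} = g\mathbf H_1 g^{-1}$, which is the claim (with the roles of $1$ and $2$ swapped, harmless since $\mathbf G(\QQ)$ is a group).

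\textbf{Main obstacle.} The argument is essentially bookkeeping; the one point requiring a little care is the compatibility of lifts — namely that a point Hodge-generic in $V$ can be lifted to a point Hodge-generic in the given connected component $X^+_{\mathbf H_i}$, for \emph{both} decompositions simultaneously, and that such a lift can be chosen in a \emph{single} $\mathbf G(\QQ)$-orbit for the two. Concretely: start from one Hodge-generic $v \in V$, lift it to $(x_1, \alpha_1)$ with $x_1 \in X^+_{\mathbf H_1}$ (possible since the map $X^+_{\mathbf H_1} \times \{\alpha_1\} \to V$ is surjective, as $V$ is the image); then lift the \emph{same} $v$ via the second parametrization, and use that the fibre of $\Sh_K(\mathbf G,X)$ over $v$ is a single $\mathbf G(\QQ)$-orbit to move it into $X^+_{\mathbf H_2} \times \{\alpha_2\}$. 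Hodge-genericity of the lift in the respective $X^+_{\mathbf H_i}$ is automatic because the projection $X^+_{\mathbf H_i} \to V$ is dominant and Hodge-genericity is preserved under such projections (equivalently, the non-Hodge-generic locus is a countable union of proper special subvarieties, whose image is not all of $V$). Once this is in place, the equivariance of Mumford-Tate groups under $\mathbf G(\QQ)$-conjugation — which is standard (cf.\ \cite{ullmo:equidistribution}, \cite{uy:andre-oort}) — closes the argument.
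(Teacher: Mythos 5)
Your argument is correct, but it takes a genuinely different route from the paper's. The paper deliberately avoids having to know that a Hodge-generic point $v\in V$ lifts to a Hodge-generic point of $X^+_{\mathbf H_2}$: it picks $x_1\in X^+_{\mathbf H_1}$ Hodge generic (so $\mathrm{MT}(x_1)=\mathbf H_1$), observes that its image $v$ also has a lift $x_2\in X^+_{\mathbf H_2}$ with $x_1=gx_2$ for some $g\in\mathbf G(\QQ)$, but makes no claim that $x_2$ is generic, concluding only $\mathbf H_1=\mathrm{MT}(x_1)\subseteq g\mathbf H_2 g^{-1}$; it then runs the symmetric argument and upgrades the two inclusions to equality via a dimension/Noetherian step. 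You instead assert that a single Hodge-generic $v$ lifts to Hodge-generic points in \emph{both} $X^+_{\mathbf H_1}$ and $X^+_{\mathbf H_2}$, obtaining the conjugacy in one stroke. That lifting claim is true, but it is the nontrivial step, and your justification is slightly off: the phrase ``Hodge-genericity is preserved under such projections'' asserts the wrong direction (from $X^+_{\mathbf H_i}$ down to $V$), whereas what you need is the converse, namely that genericity of $v$ forces genericity of every lift; this follows either from the Baire-category count you give parenthetically, or from a dimension argument (if $\mathrm{MT}(x_i)\subsetneq\mathbf H_i$, the image of $\mathrm{MT}(x_i)(\RR)^+x_i$ would be a proper special subvariety of $V$ through $v$, contradicting genericity, once one checks the dimension drop). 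The paper's double-inclusion is precisely a device to sidestep this lemma; your route is more direct conceptually but requires it. Both are valid.
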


\begin{proof} By definition, $V$ is simultaneously equal to the images of $X^+_{\mathbf H_1}\times\{\alpha_1\}$ and $X^+_{\mathbf H_2}\times\{\alpha_2\}$ in $\Sh_K(\mathbf G,X)$, where $X^+_{\mathbf H_1}$ and $X^+_{\mathbf H_2}$ are connected components of $X_{\mathbf H_1}$ and $X_{\mathbf H_2}$, respectively, and $\alpha_1,\alpha_2\in\mathbf G(\AAA_f)$. By assumption, there exists a point $x_1\in X^+_{\mathbf H_1}$ whose Mumford-Tate group is equal to $\mathbf H_1$. As the image of $(x_1,\alpha_1)$ in $\Sh_K(\mathbf G,X)$ is also contained in the image of $X_{\mathbf H_2}^+ \times \{ \alpha_2\}$, there exists $g\in \mathbf G(\QQ)$ and a point $x_2\in X^{+}_{\mathbf H_2}$ such that $x_1=gx_2$. It follows that the Mumford-Tate group $\mathbf H_1$ of $x_1$ is equal to the Mumford-Tate group of $gX_2$, which is contained in $g\mathbf H_2g^{-1}$. In other words, $H_1$ is contained in $gH_2g^{-1}$. Similarly, we deduce that there exists $q\in \mathbf G(\QQ)$ such that $\mathbf H_2$ is contained in $q\mathbf H_1q^{-1}$. Therefore, $\mathbf H_1$ is contained in $gq\mathbf H_1(gq)^{-1}$ and, therefore, they are equal. The result follows immediately.
\end{proof}

\begin{lem}\label{Gammaconj}
Suppose that $V$ is a special subvariety of $S_K(\mathbf G,X^+)$ defined by $(\mathbf H_1,X_{\mathbf H_1})$ and $X^+_{\mathbf H_1}$ and also by $(\mathbf H_2,X_{\mathbf H_2})$ and $X^+_{\mathbf H_2}$. Then $\mathbf H_1=\gamma\mathbf H_2\gamma^{-1}$ for some $\gamma\in\Gamma$.
\end{lem}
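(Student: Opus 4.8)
The plan is to imitate the proof of Lemma~\ref{Qconj}, but to keep track of the more refined data available in the connected setting, namely that both $X^+_{\mathbf H_1}$ and $X^+_{\mathbf H_2}$ are contained in $X^+$ and that the relevant $\alpha$ can be taken to be $1$. Thus $V$ is simultaneously the image of $X^+_{\mathbf H_1}\times\{1\}$ and of $X^+_{\mathbf H_2}\times\{1\}$ in $S_K(\mathbf G,X^+)=\Gamma\backslash X^+$, where $\Gamma=\mathbf G(\QQ)_+\cap K$. Pulling back to $X^+$, this says that the $\Gamma$-orbits of $X^+_{\mathbf H_1}$ and $X^+_{\mathbf H_2}$ coincide as subsets of $X^+$.

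First I would choose a point $x_1\in X^+_{\mathbf H_1}$ whose Mumford--Tate group is exactly $\mathbf H_1$; such a point exists by the definition of ``defined by''. Its image in $\Gamma\backslash X^+$ lies in $V$, hence also in the image of $X^+_{\mathbf H_2}\times\{1\}$, so there is some $\gamma\in\Gamma$ and some $x_2\in X^+_{\mathbf H_2}$ with $x_1=\gamma x_2$. As in Lemma~\ref{Qconj}, the Mumford--Tate group of $x_1$ then equals the Mumford--Tate group of $\gamma x_2$, which is $\gamma(\mathrm{MT}(x_2))\gamma^{-1}\subseteq\gamma\mathbf H_2\gamma^{-1}$, so $\mathbf H_1\subseteq\gamma\mathbf H_2\gamma^{-1}$. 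Replacing $X^+_{\mathbf H_2}$ by $\gamma X^+_{\mathbf H_2}$ (a connected component of $X_{\gamma\mathbf H_2\gamma^{-1}}$, with the same image in $\Gamma\backslash X^+$), we may assume $\gamma=1$, i.e.\ $\mathbf H_1\subseteq\mathbf H_2$ and $x_1\in X^+_{\mathbf H_2}\cap X^+_{\mathbf H_1}$.

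Next I would run the symmetric argument: pick $x_2'\in X^+_{\mathbf H_2}$ with Mumford--Tate group $\mathbf H_2$, find $\gamma'\in\Gamma$ with $x_2'=\gamma' x_1'$ for some $x_1'\in X^+_{\mathbf H_1}$, and conclude $\mathbf H_2\subseteq\gamma'\mathbf H_1(\gamma')^{-1}$. Combining the two inclusions gives $\mathbf H_1\subseteq\mathbf H_2\subseteq\gamma'\mathbf H_1(\gamma')^{-1}$. Since $\mathbf H_1$ and $\gamma'\mathbf H_1(\gamma')^{-1}$ are conjugate algebraic subgroups of $\mathbf G$, they have the same dimension (and the same number of connected components), so the chain of inclusions forces $\mathbf H_1=\mathbf H_2=\gamma'\mathbf H_1(\gamma')^{-1}$. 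Tracing back through the reduction step (where we absorbed a $\gamma$ into the choice of component), the composite conjugating element lies in $\Gamma$, and we obtain $\mathbf H_1=\gamma\mathbf H_2\gamma^{-1}$ for some $\gamma\in\Gamma$ as desired.

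The main obstacle, and the only place requiring care, is the bookkeeping: after the first step I modified the second Shimura datum by a $\Gamma$-conjugation in order to arrange $\gamma=1$, and at the end I must make sure the element I produce conjugating $\mathbf H_2$ to $\mathbf H_1$ genuinely lies in $\Gamma=\mathbf G(\QQ)_+\cap K$ rather than merely in $\mathbf G(\QQ)$. This is automatic here because every element used arises from the equality of two $\Gamma$-orbits in $X^+$, so each is realized by an actual element of $\Gamma$; the dimension/component-count argument (the analogue of the step ``$\mathbf H_1\subseteq gq\mathbf H_1(gq)^{-1}$ hence equal'' in Lemma~\ref{Qconj}) then upgrades the two inclusions to the claimed conjugacy without leaving $\Gamma$. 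A minor point to check is that $\gamma X^+_{\mathbf H}$ is indeed a connected component of $X_{\gamma\mathbf H\gamma^{-1}}$ and defines the same subvariety of $\Gamma\backslash X^+$, which is immediate from the definitions.
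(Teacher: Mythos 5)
Your argument is correct and follows essentially the same route as the paper's: pick a Hodge-generic point $x_1\in X^+_{\mathbf H_1}$, use the coincidence of $\Gamma$-orbits in $X^+$ to produce $\gamma\in\Gamma$ with $x_1=\gamma x_2$, observe that the connectedness constraint ($x_1,x_2\in X^+$) is exactly what upgrades the conjugating element from $\mathbf G(\QQ)\cap K$ to $\Gamma=\mathbf G(\QQ)_+\cap K$, and then finish by the same sandwich-plus-dimension argument as in Lemma~\ref{Qconj}. The intermediate normalization to $\gamma=1$ is harmless but unnecessary; the paper runs the two inclusions directly and composes the two $\Gamma$-elements at the end.
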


\begin{proof}
By definition, $V$ is simultaneously equal to the images of $X^+_{\mathbf H_1}\times\{1\}$ and $X^+_{\mathbf H_2}\times\{1\}$ in $S_K(\mathbf G,X^+)$. By assumption, there exists a point $x_1\in X^+_{\mathbf H_1}$ whose Mumford-Tate group is equal to $\mathbf H_1$. As the image of $(x_1,1)$ in $S_K(\mathbf G,X^+)$ is also contained in the image of $X_{\mathbf H_2}^+ \times \{ 1\}$, there exists $g\in \mathbf G(\QQ)\cap K$ and a point $x_2\in X^{+}_{\mathbf H_2}$ such that $x_1=gx_2$. Finally, the fact that $x_1,x_2\in X^+$ forces $g\in\mathbf G(\QQ)_+$ and, hence, $g\in\Gamma$.
\end{proof}

\begin{defini}
Suppose that V is a special subvariety of $S_K(\mathbf G,X^+)$ defined by $(\mathbf H_1,X_{\mathbf H_1})$ and $X^+_{\mathbf H_1}$. We will write
\begin{align*}
\Sigma_V=\Sigma(\mathbf H^\der,K(\mathbf H^\der))\text{ and }\Pi_V=\Pi(\mathbf H^\der,K(\mathbf H^\der)).
\end{align*} 
These are well-defined by Lemma \ref{Gammaconj}. 
\end{defini}

\subsection{Working in the derived group}\label{derived}
Suppose that $\mathbf{Z(G)}(\RR)$ is compact.
Let $V$ denote a special subvariety of $S_K(\mathbf G,X^+)$ defined by $(\mathbf H,X_{\mathbf H})$ and $X^+_{\mathbf H}$. Then $V$ is equal to the image of $\Gamma(\mathbf H)\backslash X^+_{\mathbf H}$ in $\Gamma\backslash X^+=S_K(\mathbf G,X^+)$, where here and henceforth we write $\Gamma(\mathbf H)$ for $\Gamma\cap\mathbf H(\QQ)_+$. 

If $K$ is neat, then $\Gamma(\mathbf H)$ is neat. Furthermore, it is contained in $\mathbf H^\der(\QQ)$. To see this latter claim, let $\mathbf C$ denote the maximal $\QQ$-torus quotient of $\mathbf H$. Since $\mathbf H$ is the almost direct product of $\mathbf H^\der$ with the $\QQ$-torus $\mathbf{Z(H)}^\circ$, we obtain an isogeny $\mathbf{Z(H)}^\circ \rightarrow \mathbf{C}$.
By \cite{uy:andre-oort}, Remark 2.3, $\mathbf{Z(H)}(\RR)$ is compact and, by \cite{milne:intro}, Proposition 5.1,
\begin{align*}
\mathbf{Z(H)}(\RR)^+\rightarrow \mathbf C(\RR)^+
\end{align*} 
is surjective. Therefore, since, by \cite{milne:intro}, Corollary 5.3, $\mathbf C(\RR)$ has only finitely many connected components, we conclude that $\mathbf C(\RR)$ is compact.

On the other hand, by \cite{milne:intro}, Proposition 3.2, the image of $\Gamma(\mathbf H)$ in $\mathbf C(\QQ)$ under the natural morphism is an arithmetic subgroup, which, by \cite{borel:arithmetic-groups}, Corollaire 17.3, is neat. We conclude, then, that the image is trivial and, therefore, that $\Gamma(\mathbf H)$ is contained in $\mathbf H^\der(\QQ)$, as claimed.

\subsection{Non-facteur special subvarieties}

Let $V$ be a special subvariety of $\Sh_K(\mathbf G,X)$ associated with a Shimura subdatum $(\mathbf H,X_{\mathbf H})$. Following Ullmo (see \cite{ullmo:equidistribution}), we say that $V$ is non-facteur if the image of $\mathbf Z(\mathbf G)({\mathbf H}^{\der})(\RR)$ in $\mathbf G^{\ad}(\RR)$ is compact. By Lemma \ref{independence}, this definition is independent of the Shimura subdatum defining $V$. (The definition in the introduction is equivalent, and more intuitive, but this definition turns out to be more useful for our purposes.) 

The definition of a non-facteur special subvariety generalizes the notion of a strongly special subvariety (strongly special subvarieties were defined in \cite{CU}).

\begin{lem}
If $V$ is strongly special, then $V$ is non-facteur. 
\end{lem}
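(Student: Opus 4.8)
The plan is to argue directly from the centraliser characterisation of ``non-facteur'' given in the definition, extracting the needed compactness from the defining property of a Shimura datum that $\mathrm{int}(h_x(i))$ is a Cartan involution of $\mathbf{G}^{\ad}_{\RR}$.

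Let $(\mathbf{H},X_{\mathbf{H}})$ be a Shimura subdatum associated with $V$, so that $\mathbf{H}$ is the generic Mumford--Tate group of $X_{\mathbf{H}}$. Recall from \cite{CU} that $V$ is strongly special precisely when $\mathbf{H}$ is semisimple; then $\mathbf{H}^{\der}=\mathbf{H}$, and what must be shown is that the image $Q$ of $\mathbf{Z}_{\mathbf{G}}(\mathbf{H})(\RR)$ in $\mathbf{G}^{\ad}(\RR)$ is compact. (Only the semisimplicity of $\mathbf{H}$ will be used; the variant of the definition that in addition forbids $\RR$-anisotropic $\QQ$-factors of $\mathbf{H}$ imposes no extra requirement here.) I would fix a point $x\in X_{\mathbf{H}}$, write $h_x\colon\SSS\to\mathbf{H}_{\RR}\subseteq\mathbf{G}_{\RR}$ for the corresponding morphism, and set $c=h_x(i)\in\mathbf{H}(\RR)$, with image $\bar{c}\in\mathbf{G}^{\ad}(\RR)$. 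Since $\mathbf{Z}_{\mathbf{G}}(\mathbf{H})$ centralises $\mathbf{H}$, it centralises $c$; hence every element of $Q$ commutes with $\bar{c}$, i.e.\ $Q$ is contained in $\{g\in\mathbf{G}^{\ad}(\RR): \mathrm{int}(\bar{c})(g)=g\}$.

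It then remains to show that this fixed-point subgroup is compact. By the Shimura-datum axioms applied to $x\in X$, the involution of $\mathbf{G}^{\ad}_{\RR}$ induced by $h_x(i)$, namely $\mathrm{int}(\bar{c})$, is a Cartan involution (see \cite{milne:intro}); consequently its fixed-point subgroup in $\mathbf{G}^{\ad}(\RR)$ is a maximal compact subgroup, in particular compact. Thus $Q$ lies in a compact subgroup of $\mathbf{G}^{\ad}(\RR)$. Finally $Q$ is closed --- it contains the identity component of $\mathbf{N}(\RR)$, where $\mathbf{N}\subseteq\mathbf{G}^{\ad}$ is the (Zariski-closed) image of $\mathbf{Z}_{\mathbf{G}}(\mathbf{H})$, so $Q$ is a union of connected components of $\mathbf{N}(\RR)$ --- and a closed subset of a compact set is compact. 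Hence $Q$ is compact and $V$ is non-facteur.

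This is essentially a translation of the two definitions through the Cartan-involution axiom, so I do not expect a genuine obstacle; the points calling for care are the bookkeeping with $\mathbf{G}$, $\mathbf{G}^{\ad}$, $\mathbf{H}$, $\mathbf{H}^{\der}$ (that $c$ really lies in $\mathbf{H}(\RR)$ and so commutes with $\mathbf{Z}_{\mathbf{G}}(\mathbf{H})(\RR)$, and that $Q$ is closed so that compactness is meant strictly) together with pinning down the exact form of the definition of ``strongly special'' taken from \cite{CU}.
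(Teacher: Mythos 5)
Your proof is correct and follows essentially the same route as the paper's: both exploit the Shimura-datum axiom that $\intt(h_x(i))$ is a Cartan involution of $\mathbf G^\ad_\RR$, observing that $\mathbf Z_\mathbf G(\mathbf H^\der)(\RR)$ commutes with $h_x(i) \in \mathbf H^\der(\RR)$ and hence its image lands in a compact subgroup of $\mathbf G^\ad(\RR)$. The only cosmetic differences are that the paper first reduces to $\mathbf G = \mathbf G^\ad$ and phrases the compactness as ``the stabilizer of any point of $X$ is compact,'' whereas you invoke the Cartan involution directly and add a short argument for closedness of the image; also note that the definition of strongly special used in the paper is that the \emph{image} of $\mathbf H$ in $\mathbf G^\ad$ is semisimple (which agrees with ``$\mathbf H$ is semisimple'' once one has reduced to the adjoint group).
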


\begin{proof}
Since $V$ is strongly special it is, by definition, associated with a Shimura subdatum $(\mathbf H,X_{\mathbf H})$ of $(\mathbf G,X)$ such that the image of $\mathbf H$ in $\mathbf G^\ad$ is semisimple. Therefore, since both definitions rely on passing to $\mathbf G^\ad$, we can and do assume that $\mathbf G=\mathbf G^\ad$ from the outset. Then, $\mathbf H=\mathbf H^\der$ is semisimple and any $x\in X_\mathbf H$ factors through $\mathbf H^\der_\RR=\mathbf H_\RR$. Therefore, $\mathbf{Z(G)(H^\der)}(\RR)$ stabilizes $x\in X$ and, since (when $\mathbf G=\mathbf G^\ad$) the stabilizer of any point in $X$ is compact, the result follows.
\end{proof}

Recall that $\mathbf G^{\ad}$ is equal to the product $\mathbf G_1\times\cdots\times \mathbf G_n$ of its $\QQ$-simple factors. Furthermore, for each $i=1,\ldots,n$, there exists an almost $\QQ$-simple normal subgroup $\widetilde{\mathbf G}_i\subseteq \mathbf G^\der$ such that $\mathbf G^\der$ is the almost direct product $\widetilde{\mathbf G}_1\cdots\widetilde{\mathbf G}_n$ and
\begin{align*}
\widetilde{\mathbf G}_i\hookrightarrow \mathbf G^\der\rightarrow \mathbf G^\ad\rightarrow \mathbf G_j
\end{align*}
is a central isogeny for $i=j$ and trivial if $i\neq j$.

\begin{lem}\label{nonfacnontriv}
If $V$ is non-facteur, then the image of $\mathbf H^\der\subseteq \mathbf G^\der$ under each of the natural projections $\mathbf G^\der\rightarrow \mathbf G_i$ is non-trivial.
\end{lem}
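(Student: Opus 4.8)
The plan is to argue by contraposition: assume that the image of $\mathbf H^\der$ under some projection $p_i:\mathbf G^\der\to\mathbf G_i$ is trivial, and deduce that $V$ is \emph{not} non-facteur, i.e.\ that the image of $\mathbf Z_{\mathbf G}(\mathbf H^\der)(\RR)$ in $\mathbf G^{\ad}(\RR)$ is not compact. Since both the notion of non-facteur and the claimed conclusion are insensitive to passing to $\mathbf G^{\ad}$ (the projections $\mathbf G^\der\to\mathbf G_i$ factor through $\mathbf G^\der\to\mathbf G^{\ad}$, and $\mathbf Z_{\mathbf G}(\mathbf H^\der)$ maps to $\mathbf Z_{\mathbf G^{\ad}}(\text{image of }\mathbf H^\der)$), I will reduce immediately to the case $\mathbf G=\mathbf G^{\ad}=\mathbf G_1\times\cdots\times\mathbf G_n$, so that $\mathbf H^\der=\widetilde{\mathbf H}_1\cdots\widetilde{\mathbf H}_n$ sits inside the product and its projection to $\mathbf G_i$ is exactly $\widetilde{\mathbf H}_i$.

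The key observation is then the following: if the projection of $\mathbf H^\der$ to $\mathbf G_i$ is trivial, then $\mathbf H^\der$ is contained in the normal subgroup $\prod_{j\neq i}\mathbf G_j\subseteq\mathbf G$, and consequently the full factor $\mathbf G_i$ (embedded as $\{1\}\times\cdots\times\mathbf G_i\times\cdots\times\{1\}$) centralizes $\mathbf H^\der$, because elements of distinct direct factors of a product commute. Hence $\mathbf G_i(\RR)\subseteq \mathbf Z_{\mathbf G}(\mathbf H^\der)(\RR)$, and so its image in $\mathbf G^{\ad}(\RR)=\mathbf G(\RR)$ is a subgroup of the image of $\mathbf Z_{\mathbf G}(\mathbf H^\der)(\RR)$. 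Since $\mathbf G_i$ is $\QQ$-simple of adjoint type and part of a Shimura datum, it is non-compact — more precisely, $\mathbf G_i(\RR)$ is non-compact because the projection of the Shimura datum $X$ to the $\mathbf G_i$-factor is non-trivial (a Shimura datum has no compact $\QQ$-simple factors, cf.\ the axioms in \cite{milne:intro}). Therefore the image of $\mathbf Z_{\mathbf G}(\mathbf H^\der)(\RR)$ in $\mathbf G^{\ad}(\RR)$ contains the non-compact group $\mathbf G_i(\RR)$ and cannot be compact, so $V$ is not non-facteur.

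Writing this out, the main step to be careful about is the reduction to the adjoint case and the verification that $\mathbf G_i(\RR)$ is genuinely non-compact: one must invoke the Shimura datum axiom that forbids $\QQ$-simple compact factors of $\mathbf G^{\ad}$ (equivalently, that the adjoint group $\mathbf G^{\ad}$ has no $\RR$-factor on which the Hodge cocharacter is trivial), which is where the hypothesis that $V$ arises from a genuine Shimura subdatum, and hence $\mathbf G^{\ad}$ from a genuine Shimura datum, is used. The commuting-factors argument and the inclusion $\mathbf G_i(\RR)\subseteq \mathbf Z_{\mathbf G}(\mathbf H^\der)(\RR)$ are then purely formal. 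The only mild subtlety is keeping track of the central isogenies: $\widetilde{\mathbf G}_i$ need not literally equal $\mathbf G_i$, but after passing to $\mathbf G^{\ad}$ the projection $\mathbf H^\der\to\mathbf G_i$ has image a normal subgroup of the $\QQ$-simple group $\mathbf G_i$, so it is either trivial or all of $\mathbf G_i$; the non-facteur hypothesis together with the previous paragraph rules out the trivial case. I expect no real obstacle here — the content is essentially the observation that a special subvariety whose Mumford-Tate group ignores one simple factor of $\mathbf G^{\ad}$ moves in the infinite family coming from that factor, which is exactly the negation of being non-facteur.
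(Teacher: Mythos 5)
Your argument is correct and is essentially the paper's: the paper likewise argues by contradiction, observing that if the projection of $\mathbf H^\der$ to $\mathbf G_i$ were trivial then the corresponding almost-$\QQ$-simple factor $\widetilde{\mathbf G}_i$ of $\mathbf G^\der$ would lie in $\mathbf Z_{\mathbf G}(\mathbf H^\der)$, and $\widetilde{\mathbf G}_i(\RR)$ is non-compact by the Shimura-datum axioms, contradicting non-facteurness. One small caveat on your final paragraph: the image of $\mathbf H^\der$ under $\mathbf G^\der\to\mathbf G_i$ is \emph{not} automatically a normal subgroup of $\mathbf G_i$ (that would require $\mathbf H^\der$ to be normal in $\mathbf G^\der$), so the purported dichotomy ``trivial or all of $\mathbf G_i$'' is false in general; fortunately you never use it, as the argument in your first two paragraphs already carries the proof.
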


\begin{proof}
If the image of $\mathbf H^\der$ under $\mathbf G^\der\rightarrow \mathbf G_i$ were trivial, then $\widetilde{\mathbf G}_i$ would be contained in $\mathbf{Z(G)(H^\der)}$. However, by the definition of a Shimura datum, $\widetilde{\mathbf G}_i(\RR)$ is not compact. Hence, we obtain at a contradiction.
\end{proof}

\begin{rem}\label{NFrem}
We expect that there are only finitely many non-facteur special subvarieties of degree at most $A$, though we believe this to be an open problem. There are likely many ways to approach this problem, but we outline our intuition below. 

Let $V$ be a non-facteur special subvariety of $\Sh_K(\mathbf G,X)$ satisfying $\deg V\leq A$. Then $V$ is associated with a Shimura subdatum $(\mathbf H,X_\mathbf H)$. However, since the image of $\mathbf{Z(G)(H^\der)}(\RR)$ in $\mathbf G^\ad(\RR)$ is compact, it follows that the $\mathbf H(\RR)$-conjugacy class $X_\mathbf H$ is equal to the $\mathbf H_1(\RR)$-conjugacy class containing it, where $\mathbf H_1$ is the almost direct product of $\mathbf H^\der$ and $\mathbf{Z(G)(H^\der)}^\circ$. If we let $\mathbf H_2$ denote the product of $Z(\mathbf H_1)^\circ$ with the almost $\QQ$-simple factors of $\mathbf H_1$ whose underlying real Lie groups are non-compact, then \cite{ullmo:equidistribution}, Lemme 3.3, implies that $(\mathbf H_2,X_\mathbf H)$ is a Shimura subdatum of $(\mathbf G,X)$, and $V$ is associated with it. On the other hand, \cite{DR}, Conjecture 10.4, predicts that there exists a finite set $\Omega:=\Omega(A)$ of semisimple $\QQ$-subgroups of $\mathbf G$ such that $\mathbf H^\der=\gamma \mathbf F\gamma^{-1}$, for some $\gamma\in\Gamma$ and $\mathbf F\in\Omega$. Therefore, $\mathbf H_2=\gamma \mathbf F_2\gamma^{-1}$, where $\mathbf F_2$ is to $\mathbf F$ as $\mathbf H_2$ is to $\mathbf H^\der$. We conclude that $V$ is associated with a Shimura datum of the form $(\mathbf F_2,Y)$. By \cite{uy:andre-oort}, Lemma 3.7, there are only finitely many such Shimura subdata and this restricts $V$ to a finite set.
\end{rem}

\section{Comparing degrees of special subvarieties}
The following two results comparing the degrees of special subvarieties with respect to different Baily-Borel line bundles will be used repeatedly in the article. The first is immediate from \cite{KY:AO}, Corollary 5.3.10.
	
	\begin{lem}\label{upperbound} Let $(\mathbf G,X)$ be a Shimura datum such that $\mathbf{Z(G)}(\RR)$ is compact and let $K\subset \mathbf \mathbf \mathbf G(\AAA_f)$ be a neat compact open subgroup. Furthermore, let $(\mathbf H,X_{\mathbf H})$ be a Shimura subdatum of $(\mathbf G,X)$.
	For each Hodge-generic subvariety $Y_{\mathbf H}$ of $\Sh_{K(\mathbf H)}(\mathbf H,X_{\mathbf H})$ with image $Y$ in $\Sh_K(\mathbf G,X)$, we have $$\deg_{L_{K(\mathbf H)}}Y_{\mathbf H} \leq \deg_{L_K}Y.$$
	\end{lem}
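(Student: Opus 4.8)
The plan is to reduce the statement to the functoriality of the Baily-Borel line bundle under the morphism $\iota_{\mathbf H}$ together with the projection formula. First I would recall that, since $\mathbf{Z(G)}(\RR)$ is compact and $K$ is neat, the Shimura variety $\Sh_K(\mathbf G,X)$ is a smooth quasi-projective variety, and we work throughout on the Baily-Borel compactifications, writing $\deg$ for degrees with respect to the appropriate Baily-Borel line bundles and taking Zariski closures implicitly. The key input is \cite{KY:AO}, Corollary 5.3.10, which compares $\iota_{\mathbf H}^* L_K$ with $L_{K(\mathbf H)}$: there is an effective divisor (or, at the level of line bundles, an inequality of the relevant intersection numbers) expressing that $\iota_{\mathbf H}^*L_K$ dominates $L_{K(\mathbf H)}$, i.e. $\iota_{\mathbf H}^*L_K = L_{K(\mathbf H)}\otimes \mathcal O(D)$ for some effective $D$ supported on the boundary.

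The main steps are then as follows. Let $Y_{\mathbf H}$ be a Hodge-generic subvariety of $\Sh_{K(\mathbf H)}(\mathbf H, X_{\mathbf H})$, and let $Y$ be its image in $\Sh_K(\mathbf G,X)$ under $\iota_{\mathbf H}$. Since $\iota_{\mathbf H}$ is a closed immersion (as recalled in Section~\ref{section:spsub}) that extends to the Baily-Borel compactifications, it restricts to a finite birational morphism $\overline{Y_{\mathbf H}} \to \overline Y$; in particular it has degree $1$. I would then apply the projection formula (\cite{Fulton1998}, Proposition 2.5(c)) to the morphism $\overline{Y_{\mathbf H}}\to\overline Y$ and the line bundle $L_K$, obtaining $\deg_{L_K} Y = \deg_{\iota_{\mathbf H}^* L_K} Y_{\mathbf H}$. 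Finally, using $\iota_{\mathbf H}^* L_K = L_{K(\mathbf H)}\otimes\mathcal O(D)$ with $D$ effective, and the fact that adding an effective divisor can only increase the degree of a positive-dimensional (or equal for a point) subvariety whose support is not contained in $D$ — here one uses Hodge-genericity of $Y_{\mathbf H}$, which guarantees $Y_{\mathbf H}$ meets the open part and hence is not contained in the boundary support of $D$ — we get $\deg_{\iota_{\mathbf H}^* L_K} Y_{\mathbf H} \geq \deg_{L_{K(\mathbf H)}} Y_{\mathbf H}$, which chains together to the desired inequality.

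The step I expect to be the main obstacle, or at least the one requiring the most care, is the comparison $\iota_{\mathbf H}^* L_K$ versus $L_{K(\mathbf H)}$ and the verification that the discrepancy is genuinely effective and supported on the boundary: this is exactly the content of \cite{KY:AO}, Corollary 5.3.10, so the real work is citing it correctly and checking that its hypotheses (neatness of $K$, compactness of $\mathbf{Z(G)}(\RR)$, and the Shimura subdatum setup) match ours. A secondary subtlety is ensuring that Hodge-genericity of $Y_{\mathbf H}$ is precisely what is needed to rule out $\overline{Y_{\mathbf H}}$ being contained in the support of the exceptional divisor; since a Hodge-generic subvariety meets $\Sh_{K(\mathbf H)}(\mathbf H, X_{\mathbf H})$ (the non-compactified part) and the exceptional locus lies in the boundary, this holds, but it should be stated explicitly. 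Given that \cite{KY:AO}, Corollary 5.3.10, already packages the hard geometry, the lemma is essentially immediate, and I would present it in a few lines as a formal consequence of that corollary, the projection formula, and effectivity of the boundary discrepancy.
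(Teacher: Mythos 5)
The paper does not actually prove this lemma: the text before the statement reads ``The first is immediate from \cite{KY:AO}, Corollary 5.3.10'', and that is the entire argument. That corollary, to judge from how it is invoked here and in the companion Lemma~\ref{lowerbound}, \emph{is} essentially the degree inequality you are trying to prove (stated for a Hodge-generic subvariety of a Shimura subvariety). So your proposal is not reconstructing the paper's proof so much as reconstructing the proof of the cited result, which is fair enough, but two of the intermediate claims you lean on are not quite right.

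First, the discrepancy between $\iota_{\mathbf H}^*L_K$ and $L_{K(\mathbf H)}$ is not an effective divisor supported on the boundary. On the open Shimura variety, $L_K$ is the automorphic line bundle given by the top exterior power of the cotangent bundle of $X$; restricting to $X_{\mathbf H}$, one finds $\iota_{\mathbf H}^*L_K \cong L_{K(\mathbf H)} \otimes \det \mathcal N^{\ast}_{X_{\mathbf H}/X}$, where $\det \mathcal N^\ast$ is the determinant of the conormal bundle. This is a genuine automorphic line bundle, nontrivial on the entire interior, not merely along the compactifying boundary. The mechanism that gives the inequality is the one used in the paper's own proof of Lemma~\ref{lowerbound}: in Satake's formalism, $\iota_{\mathbf H}^*L_K$ and $L_{K(\mathbf H)}$ correspond to tuples ${\bf m}M$ and ${\bf m}_{\mathbf H}$, the difference bundle $\Lambda:=\iota_{\mathbf H}^*L_K\otimes L_{K(\mathbf H)}^{-1}$ is associated with a non-negative tuple and hence is nef on the Baily-Borel compactification, and then the binomial expansion of $c_1(L_{K(\mathbf H)}\otimes\Lambda)^{\dim Y_{\mathbf H}}$ has all its mixed terms $\geq 0$ because $L_{K(\mathbf H)}$ is ample and $\Lambda$ is nef. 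Your ``effective on the boundary, which $Y_{\mathbf H}$ avoids'' picture cannot substitute for this: effectivity of a divisor does \emph{not} by itself give non-negativity of the top self-intersection on $Y_{\mathbf H}$, so without nefness the binomial argument does not close.

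Second, the Hodge-genericity of $Y_{\mathbf H}$ is not there to keep $Y_{\mathbf H}$ off a boundary divisor. It is needed to ensure that $\iota_{\mathbf H}$ restricted to $Y_{\mathbf H}$ is generically injective, so that the projection formula gives $\deg_{L_K}Y=\deg_{\iota_{\mathbf H}^*L_K}Y_{\mathbf H}$ with no multiplicative degree factor; this is exactly what the paper cites \cite{uy:andre-oort}, Lemma 2.2 for in the proof of Lemma~\ref{lowerbound}. The morphism $\iota_{\mathbf H}$ from a Shimura subdatum is in general only finite, not a closed immersion (the paper's parenthetical remark to the contrary should be read with caution), and without Hodge-genericity one would only obtain $\deg_{L_{K(\mathbf H)}}Y_{\mathbf H}\leq e\cdot\deg_{L_K}Y$ with $e=\deg(\iota_{\mathbf H}|_{Y_{\mathbf H}})$, which is too weak. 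So the hypothesis is genuinely load-bearing, but for a different step than the one you assign it to.

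In short: your overall architecture (projection formula plus positivity of the discrepancy bundle) is right, but you should replace ``effective divisor on the boundary'' by ``nef automorphic line bundle equal to $\det\mathcal N^\ast_{X_{\mathbf H}/X}$'' and then run the binomial/nefness argument, and you should move the use of Hodge-genericity to the projection-formula step.
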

	
The second provides a certain converse.
	
	\begin{lem} 
	\label{lowerbound}	
	Let $(\mathbf G,X)$ and $K$ be as in Lemma \ref{upperbound} and let $X^+$ denote a connected component of $X$. Then there exists an effectively computable constant $d$ such that the following holds:
		
	Let $(\mathbf H,X_{\mathbf H})$ be a Shimura subdatum of $(\mathbf G,X)$ and let $X^+_{\mathbf H}$ denote a connected component of $X_{\mathbf H}$ contained in $X^+$. For each subvariety $Y_{\mathbf H}$ of $S_{K(\mathbf H)}(\mathbf H,X_{\mathbf H}^+)$ with image $Y$ in $S_K(\mathbf G,X^+)$, we have $$\deg_{L_K}Y\leq d \cdot \deg_{L_{K(\mathbf H)}}Y_{\mathbf H}$$.
	\end{lem}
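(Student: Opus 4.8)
The plan is to reduce the inequality to a uniform comparison of two ample line bundles on $\overline{\Sh_{K(\mathbf H)}(\mathbf H,X_{\mathbf H})}$, and then to prove that comparison by splitting $X_{\mathbf H}$ into simple factors and invoking the defining property of Shimura data.

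We may assume $\dim X_{\mathbf H}>0$, the case $\dim X_{\mathbf H}=0$ being trivial (both degrees equal $1$). Since $\iota_{\mathbf H}$ extends to a closed immersion $\overline{\iota_{\mathbf H}}\colon\overline{\Sh_{K(\mathbf H)}(\mathbf H,X_{\mathbf H})}\to\overline{\Sh_K(\mathbf G,X)}$, it maps $\overline{Y_{\mathbf H}}$ isomorphically onto $\overline{Y}$, so the projection formula gives $\deg_{L_K}Y=\deg_{A}Y_{\mathbf H}$, where $A:=\overline{\iota_{\mathbf H}}^{\,*}L_K$ and $B:=L_{K(\mathbf H)}$ are both ample on $\overline{\Sh_{K(\mathbf H)}(\mathbf H,X_{\mathbf H})}$. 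It therefore suffices to produce an effectively computable constant $d_0\ge 1$, depending only on $(\mathbf G,X)$ and $K$, such that $d_0B-A$ is nef: then, $A$, $B$ and $d_0B-A$ all being nef, we get $\deg_AY_{\mathbf H}=A^{\dim Y_{\mathbf H}}\!\cdot Y_{\mathbf H}\le d_0^{\dim Y_{\mathbf H}}\deg_BY_{\mathbf H}$, and we take $d:=d_0^{\dim X}$.

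For the comparison, write $\mathbf H^{\ad}=\prod_k\mathbf H_k$ for the decomposition into $\QQ$-simple factors, with $\widetilde{\mathbf H}_k\subseteq\mathbf H^{\der}$ the associated almost-$\QQ$-simple normal subgroups and $X_{\mathbf H}=\prod_kX_{\mathbf H_k}$, and let $\overline{\mathrm{pr}_k}$ be the induced projections of Baily--Borel compactifications. On the open part, $B$ descends $K_{X_{\mathbf H}}=\boxtimes_kK_{X_{\mathbf H_k}}$, while $A$ descends $K_X|_{X_{\mathbf H}}$, which as a homogeneous line bundle for the isotropy group $\prod_kK_{\infty,k}$ has the form $\boxtimes_k\mathcal L_k$, with $\mathcal L_k$ attached to the character by which $K_{\infty,k}$ acts on $\Lambda^{\mathrm{top}}\mathfrak p^{+}_{\mathbf G}$. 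Using functoriality of the Baily--Borel line bundle under the $\mathrm{pr}_k$ and its compatibility with tensor products (cf.\ \cite{KY:AO}, \S5.3), these identities extend to the compactifications: $B=\bigotimes_k\overline{\mathrm{pr}_k}^{\,*}L_{K(\widetilde{\mathbf H}_k)}$ and $A=\bigotimes_k\overline{\mathrm{pr}_k}^{\,*}\overline{\mathcal L_k}$. Hence it is enough to establish $\overline{\mathcal L_k}\preceq(\dim X)\,L_{K(\widetilde{\mathbf H}_k)}$ for every $k$, for then $d_0B-A=\bigotimes_k\overline{\mathrm{pr}_k}^{\,*}\!\bigl((\dim X)L_{K(\widetilde{\mathbf H}_k)}-\overline{\mathcal L_k}\bigr)$ is a tensor product of pullbacks of nef line bundles, with $d_0:=\dim X$.

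Finally, since $X_{\mathbf H_k}$ is a product of Galois-conjugate irreducible Hermitian symmetric domains and the construction of $\mathcal L_k$ is defined over $\QQ$, the line bundle $\mathcal L_k$ is a rational power $K_{X_{\mathbf H_k}}^{\otimes r_k}$ of the canonical bundle, whence $\overline{\mathcal L_k}=L_{K(\widetilde{\mathbf H}_k)}^{\otimes r_k}$; one has $r_k>0$ because $A$ is ample (restrict to the sub-Shimura variety attached to $\widetilde{\mathbf H}_k$). Evaluating the defining characters on the Hodge cocharacter $\mu$ of an irreducible factor $X_i\subseteq X_{\mathbf H_k}$ identifies $r_k\dim X_i$ with the sum of the weights of $\mu$ on $\mathfrak p^{+}_{\mathbf G}$. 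The crux — and the step I expect to be the main obstacle — is that $\mu$ acts on $\Lie\mathbf G_{\CC}$ with weights in $\{-1,0,1\}$; granting this, that sum is at most $\dim\mathfrak p^{+}_{\mathbf G}=\dim X$ in absolute value, so $0<r_k\le\dim X$, and $(\dim X)L_{K(\widetilde{\mathbf H}_k)}-\mathcal L_k=L_{K(\widetilde{\mathbf H}_k)}^{\otimes(\dim X-r_k)}$ is a nonnegative rational power of an ample line bundle, hence nef, giving the lemma with $d=(\dim X)^{\dim X}$. To prove the weight claim one uses that the hypothesis on $\mathbf{Z(G)}(\RR)$ makes the weight homomorphism of $(\mathbf H,X_{\mathbf H})$ trivial, so each component $h_i$ of a point of $X_{\mathbf H}$ factors through $\SSS^1$ and $\mu^{-1}$ is the Hodge cocharacter of a point of the conjugate domain $\overline{X_i}$; replacing the $i$-th coordinate of a fixed point of $X_{\mathbf H}$ by $h_i$, resp.\ by its conjugate, gives points of Shimura data for $\mathbf G$ whose Hodge cocharacters $\nu\mu$ and $\nu\mu^{-1}$ (with $\nu$ a cocharacter commuting with $\mu$) each act on $\Lie\mathbf G_{\CC}$ with weights in $\{-1,0,1\}$, and passing to half-sums and half-differences, using integrality of the root pairing, confines the weights of $\mu$ to $\{-1,0,1\}$. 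The remaining friction is the bookkeeping for the Baily--Borel compactification of a product of Shimura varieties (which is not the product of the factors' compactifications) and the functoriality statements for the automorphic line bundles; these I expect to be routine but delicate.
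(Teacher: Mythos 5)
Your overall architecture — pass to $\iota_{\mathbf H}^*L_K$ via the closed immersion and projection formula, then prove a uniform line-bundle comparison of the form ``$d_0L_{K(\mathbf H)}\succeq\iota_{\mathbf H}^*L_K$'' and expand binomially — matches the paper's, and your initial reduction is even a bit cleaner than the paper's (which first passes to the Hodge-generic case rather than using that $\overline{\iota_{\mathbf H}}$ is a closed immersion). But the heart of your argument is genuinely different: the paper establishes the comparison by working entirely inside Satake's framework of automorphic line bundles, writing $L_{K(\mathbf H)}$ and $\iota_{\mathbf H}^*L_K$ as tuples $\mathbf m_{\mathbf H}$ and $\mathbf m M$ indexed by the simple real factors, bounding the entries of the matrix $M$ (by \cite{satake:embeddings}, Sections 1.4, 2.1 the entries lie in $\{0,\dots,t\}$ with $t$ controlled by the number of simple $\RR$-roots), and then invoking Satake's Theorems 1 and 3 to conclude that $L_{K(\mathbf H)}^{\otimes d}\otimes(\iota_{\mathbf H}^*L_K)^{-1}$ is ample. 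You replace Satake's machinery by a direct Lie-theoretic computation: decompose over the $\QQ$-simple factors of $\mathbf H^{\ad}$, identify $\iota_{\mathbf H}^*L_K$ on each factor as a rational power $r_k$ of the canonical bundle, and bound $r_k$ by a weight calculation with the partial Hodge cocharacters. Satake's route buys you the delicate positivity and compactification bookkeeping for free; yours is more self-contained if the weight bound can be pushed through.

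However, the weight step — which you rightly flag as the main obstacle — has a real gap as written. The partial cocharacter $\mu=\mu_i$ you construct lives in $X_*(\mathbf H^{\ad}_{\CC})$, not in $X_*(\mathbf G_{\CC})$: the quotient $\mathbf H\to\mathbf H^{\ad}$ is not an inclusion into $\mathbf G$, so ``$\mu$ acts on $\Lie\mathbf G_{\CC}$'' is not literally defined, and the integrality of the root pairing $\langle\alpha,\mu\rangle$ that you use to pass from $|\langle\alpha,\mu\rangle|\le 1$ to $\langle\alpha,\mu\rangle\in\{-1,0,1\}$ is not automatic. Likewise the assertion that replacing a single irreducible factor of $h\in X_{\mathbf H}$ by its complex conjugate produces a point of a Shimura datum for $\mathbf G$ is unjustified: the modified morphism is a priori only $\SSS\to\mathbf H^{\ad}_{\RR}$, not $\SSS\to\mathbf G_{\RR}$, and even lifting it to $\mathbf H_{\RR}$ one needs to check it lies in (or defines a new) $\mathbf G(\RR)$-conjugacy class satisfying SV1 on all of $\Lie\mathbf G$, which is precisely the conclusion you want to extract. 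Both issues are plausibly repairable — e.g.\ replacing $\mu$ by $\mu^{|\mathbf Z|}$ (with $|\mathbf Z|$ bounded via Lemma \ref{modZ}) to land in $X_*(\mathbf H^{\der}_{\CC})\subset X_*(\mathbf G_{\CC})$, at the cost of inflating the final constant — but as it stands the crux of your argument is not complete, whereas the paper sidesteps exactly this by citing Satake.
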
 

	Note that there is no Hodge-generic assumption on $Y_{\mathbf{H}}$ in Lemma \ref{lowerbound}.

	\begin{proof}
		We use the term uniform to refer to any constant depending only on $(\mathbf{G}, X)$ and $K$.
		
		First note that we can and do assume that $Y_\mathbf H$ is Hodge generic in $S_{K(\mathbf H)}(\mathbf H,X_\mathbf H^+)$. To see this, let $(\mathbf{H^\prime},X_\mathbf{H^\prime})$ denote a Shimura subdatum of $(\mathbf H,X_\mathbf H)$ and let $X^+_\mathbf{H^\prime}$ denote a connected component of $X_\mathbf{H^\prime}$ such that $Y_\mathbf H$ is the image of a Hodge generic subvariety $Y_\mathbf{H^\prime}$ of $S_{K(\mathbf{H^\prime})}(\mathbf{H^\prime},X_\mathbf{H^\prime}^+)$ under the natural morphism. If we have
		\begin{align*}
		\deg_{L_K}Y\leq d\cdot\deg_{L_{K(\mathbf{H^\prime})}}Y_\mathbf{H^\prime},
		\end{align*}
		for some uniform, effectively computable constant $d$, then the assertion follows from Lemma \ref{upperbound} above.
		
		Now let
		\begin{align*}
		\iota_\mathbf H:\Sh_{K(\mathbf H)}(\mathbf H,X_\mathbf H)\rightarrow\Sh_K(\mathbf G,X)
		\end{align*}
		denote the natural morphism of Shimura varieties. By the proof of \cite{uy:andre-oort}, Lemma 2.2, the morphism $\iota_{\mathbf H|{Y_\mathbf H}}$ is generically injective and so, by the projection formula, we have 
		\begin{align*}
		\deg_{L_K}Y=\deg_{\iota_\mathbf H^*{L_K}}Y_\mathbf H.
		\end{align*}
		Hence, it remains to show that 
		\begin{align}\label{degreescomp}
		\deg_{\iota_\mathbf H^*{L_K}}Y_\mathbf H\leq d\cdot\deg_{L_{K(\mathbf H)}}Y_\mathbf H,
		\end{align}
		for some uniform, effectively computable constant $d$.
		
		Note that the connected component $S_{K(\mathbf H)}(\mathbf H,X_\mathbf H^+)$ is equal to $\Gamma(\mathbf H)\backslash X^+_\mathbf H$, where, by Section \ref{derived}, $\Gamma(\mathbf H)$ is an arithmetic subgroup of the semisimple real Lie group $\mathbf H^\der(\RR)$. By \cite{satake:embeddings}, Section 4, the Baily-Borel line bundle $L_{K(\mathbf H)}$ on $S_{K(\mathbf H)}(\mathbf H,X_\mathbf H^+)$ is defined by a tuple ${\bf m}_\mathbf H$ of non-negative integers. In fact, by \cite{satake:embeddings}, Section 4.3, since the corresponding automorphy factor is a positive integer power of the functional determinant (see \cite{bb:compactification}, Section 7.3), ${\bf m}_\mathbf H$ has positive entries. Similarly, $S_K(\mathbf G,X^+)$ is equal to $\Gamma\backslash X^+$, where $\Gamma$ is an arithmetic subgroup of the semisimple real Lie group $\mathbf G(\RR)$, and the Baily-Borel line bundle $L_K$ on $S_K(\mathbf G,X^+)$ is defined by another tuple $\bf m$ of positive integers. As explained in \cite{satake:embeddings}, Section 4.1, the automorphic line bundle $L:=\iota_\mathbf H^*L_K$ is defined by ${\bf m}M$ for some matrix $M$ associated with the inclusion of $\mathbf H^\der$ in $\mathbf G$. However, as can be seen from \cite{satake:embeddings}, Sections 1.4 and 2.1, the entries of $M$ are all integers belonging to the set $\{0,\ldots,t\}$, where $t$ is any bound for the maximum number of simple $\RR$-roots of any $\RR$-simple factor of $\mathbf G_\RR$. In particular, the entries of ${\bf m}M$ are at most $d:=r^3_\mathbf G$, where $r_\mathbf G$ denotes the rank of $\mathbf G$. We conclude that the entries of $d{\bf m}_\mathbf H$ are all greater than the corresponding entries of ${\bf m}M$.
		
		The automorphic line bundle $\Lambda:=L_{K(\mathbf H)}^{\otimes d}\otimes L^{-1}$ is associated with the tuple $d{\bf m}_\mathbf H-{\bf m}M$. This tuple is clearly positive (in the terminology of \cite{satake:embeddings}, Section 4.3), and it also of rational type: ${\bf m}_\mathbf H$ and ${\bf m}$ are of rational type since $\mathbf H^\der$ and $\mathbf G$ are defined over $\QQ$, from which it follows that ${\bf m}M$ is of rational type (see the proof of \cite{satake:embeddings}, Theorem 3). Therefore, $\Lambda$ is ample, by \cite{satake:embeddings}, Theorem 1, and so too is $L$ (again, see the proof of \cite{satake:embeddings}, Theorem 3).
		
		By definition,
		\begin{align*}
		d\cdot\deg_{L_{K(\mathbf H)}}Y_\mathbf H&=\int_{Y_\mathbf H}c_1(L^{\otimes d}_{K(\mathbf H)})^{\dim Y_\mathbf H}=\int_{Y_\mathbf H}c_1(L\otimes\Lambda)^{\dim Y_\mathbf H}=\\
		&=\sum_{i=0}^{\dim Y_\mathbf H}\binom{\dim Y_\mathbf H}{i}\int_{Y_H}c_1(L)^i\wedge c_1(\Lambda)^{\dim Y_\mathbf H-i}.
		\end{align*}
		Therefore, the result follows from the fact that
		\begin{align*}
		\int_{Y_\mathbf H}c_1(L)^i\wedge c_1(\Lambda)^{\dim Y_\mathbf H-i}\geq 0,
		\end{align*}
		for all $0\leq i\leq\dim Y_\mathbf H-1$, since $L$ and $\Lambda$ are both ample.
	\end{proof}

\section{Choosing measures}\label{measures}

Let $(\mathbf H,X_\mathbf H)$ be a Shimura datum such that $\mathbf H^\der$ is non-trivial (which is to say that any special subvariety associated with $(\mathbf H,X_\mathbf H)$ has positive dimension). Let $\widetilde{\mathbf H}\rightarrow \mathbf H^\der$ denote the simply connected $\QQ$-covering with finite central kernel. Being simply connected, the group $\widetilde{\mathbf H}$ is equal to a direct product $\mathbf H_1\times\cdots\times \mathbf H_s$ of almost $\QQ$-simple, simply connected $\QQ$-groups. By \cite{vasiu:geometry}, Section 3.3, each $\mathbf H_i$ is of the form $\Res_{K_i/\QQ}\mathbf H'_i$ for some totally real field $K_i$ and some absolutely simple, simply connected $K_i$-group $\mathbf H'_i$.

For each $i \in \{ 1, \dots, s \}$ and each archimedean place $v$ of $K_i$, we let $\mu_{i,v}$ denote the Haar measure on $\mathbf H^\prime(K_{i,v})$ induced by the left-invariant exterior form denoted $c_v\omega^*$ in \cite{prasad:volumes}, Section 3.5. Writing $\mu_i:=\prod_{v|\infty}\mu_{i,v}$ we obtain a Haar measure on
\begin{align*}
\mathbf H_i(\RR)=\mathbf H'_i(K_i\otimes_{\QQ}\RR)=\prod_{v|\infty}\mathbf H'(K_{i,v}).
\end{align*}
Hence, if we let $\widetilde{\mu}:=\prod_{i=1}^s\mu_i$, we obtain a Haar measure on $\widetilde{\mathbf H}(\RR)$. We let $\mu$ denote the Haar measure on $\mathbf H^{\der}(\RR)^+$ equal to the pushforward of $\widetilde{\mu}$ under the surjective morphism
\begin{align*}
\widetilde{\mathbf H}(\RR)\rightarrow \mathbf H^\der(\RR)^+
\end{align*}
(for the connectedness of $\widetilde{\mathbf H}(\RR)$ see \cite{milne:intro}, Theorem 5.2 and, for surjectivity, see \cite{milne:intro}, Proposition 5.1).

Let $X^+_\mathbf H$ be a connected component of $X_\mathbf H$ and let $x\in X^+_\mathbf H$. Let $K_\infty$ be the maximal compact subgroup of $\mathbf H^\der(\RR)^+$ equal to the stabilizer of $x$. By \cite{milne:intro}, Lemma 1.5 and Proposition 5.1, we obtain a surjective map
\begin{align*}
\pi_x:\mathbf H^\der(\RR)^+\rightarrow X^+_\mathbf H:h\mapsto hx,
\end{align*}
through which we may identify $X^+_\mathbf H$ with $\mathbf H^\der(\RR)^+/K_\infty$. Since $\mathbf H^\der(\RR)^+$ is unimodular, the pushforward $\pi_{x\ast}\mu$ is independent of the choice of $x$ and, therefore, we also denote it $\mu$. We also denote by $\mu$ the induced measure on any arithmetic quotient of $\mathbf H^\der(\RR)^+$ or $X^+_\mathbf H$. 

On the other hand, for any left $\mathbf H^\der(\RR)^+$-invariant differential form $\nu$ of maximal degree on $X^+_\mathbf H$, we also obtain a measure $\mu_\nu$. We choose such a form $\nu$ such that $\mu_\nu=\mu$. 

Recall that the Lie algebra $\mathfrak{h}$ of $\mathbf H^\der(\RR)^+$ admits a Cartan decomposition $\mathfrak{k}\oplus\mathfrak{p}$, where $\mathfrak{k}$ denotes the Lie algebra of $K_\infty$ and $\mathfrak{p}$ can be identified with the tangent space of $X^+_\mathbf H$ at $K_\infty$. Inside the complexification of $\mathfrak{h}$, we find the Lie algebra $\mathfrak{k}\oplus i\mathfrak{p}$, which corresponds to a maximal compact Lie subgroup $H^c$ of $\mathbf H^\der(\CC)$, which contains $\mathbf H^\der(\RR)^+$. The quotient manifold $H^c/ K_\infty$ is compact and contains $X^+_\mathbf H$ as an open subset. We refer to $H^c/ K_\infty$ as the compact dual of $X^+_\mathbf H$ and denote it $\check{X}_\mathbf H$.

The space $i\mathfrak{p}$ can be identified with the tangent space of $\check{X}_\mathbf H$ at $K_\infty$. The map $\mathfrak{p}\rightarrow i\mathfrak{p}$ sending $p$ to $ip$ induces, therefore, an isomorphism between the tangent spaces of $X^+_\mathbf H$ and $\check{X}_\mathbf H$ at $K_\infty$. In particular, $\nu$ induces a left $H^c$-invariant differential form $\check{\nu}$ of maximal degree on $\check{X}_\mathbf H$ and we denote the corresponding measure $\check{\mu}$. By \cite{prasad:volumes}, Section 3, we have $\check{\mu}(\check{X}_\mathbf H)=1$.

\section{Lower bounds for degrees of a special subvarieties}

Our main ingredient for proving Theorem \ref{maintheorem} is the following generalization of \cite{daw:degrees}, Theorem 1.4. We extend the lower bound given there (for the degree of a strongly special subvariety) to a lower bound for the degree of any positive-dimensional special subvariety.

\begin{teo}\label{mainbound}
Let $(\mathbf G,X)$ be a Shimura datum such that $\mathbf{Z(G)}(\RR)$ is compact and let $X^+$ be a connected component of $X$. Fix a faithful representation $\rho:\mathbf G\rightarrow\GL_n$ and let $K\subset \mathbf G(\AAA_f)$ be a neat compact open subgroup equal to the product of compact open subgroups $K_p\subset \mathbf G(\QQ_p)$. 

There exist effectively computable positive constants $c_1$ and $\delta$ such that, if $V$ is a positive-dimensional special subvariety of $S_K(\mathbf G,X^+)$ defined by $(\mathbf H,X_\mathbf H)$ and $X^+_{\mathbf H}$, then
\begin{align*}
\deg V>c_1\Pi_V^\delta.
\end{align*}
\end{teo}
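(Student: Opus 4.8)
The plan is to bound $\deg V$ from below by the volume of the arithmetic quotient $\Gamma(\mathbf H)\backslash X^+_{\mathbf H}$ with respect to the measure $\mu$ fixed in Section \ref{measures}, and then to bound that volume from below by a positive power of $\Pi_V$ by means of Prasad's volume formula. This mirrors the proof of the strongly special case in \cite{daw:degrees}; the only substantially new point is that one must work with the derived group $\mathbf H^\der$ of a general generic Mumford--Tate group rather than with a semisimple group, and the whole point of the hypotheses ($\mathbf{Z(G)}(\RR)$ compact, $K$ neat) is to make this harmless.

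For the first step I would argue as follows. Since $V$ is defined by $(\mathbf H,X_{\mathbf H})$ and $X^+_{\mathbf H}$, it is the image of $X^+_{\mathbf H}\times\{1\}$ in $S_K(\mathbf G,X^+)$, and this image is the entire connected component $S_{K(\mathbf H)}(\mathbf H,X^+_{\mathbf H})$, which is Hodge generic in $\Sh_{K(\mathbf H)}(\mathbf H,X_{\mathbf H})$ because $\mathbf H$ is its own generic Mumford--Tate group. Lemma \ref{upperbound} then gives $\deg V=\deg_{L_K}V\geq\deg_{L_{K(\mathbf H)}}S_{K(\mathbf H)}(\mathbf H,X^+_{\mathbf H})$. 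By Section \ref{derived} we have $S_{K(\mathbf H)}(\mathbf H,X^+_{\mathbf H})=\Gamma(\mathbf H)\backslash X^+_{\mathbf H}$ with $\Gamma(\mathbf H)\subseteq\mathbf H^\der(\QQ)$, in fact $\Gamma(\mathbf H)=\mathbf H^\der(\QQ)_+\cap K(\mathbf H^\der)$, a neat arithmetic subgroup of the semisimple group $\mathbf H^\der(\RR)^+$. I would then invoke Mumford's proportionality theorem: $\deg_{L_{K(\mathbf H)}}(\Gamma(\mathbf H)\backslash X^+_{\mathbf H})$ equals $\mu(\Gamma(\mathbf H)\backslash X^+_{\mathbf H})$ times the top self-intersection number of the automorphic bundle dual to $L_{K(\mathbf H)}$ on the compact dual $\check X_{\mathbf H}$, the latter being a positive integer since that bundle is ample (cf.\ the proof of Lemma \ref{lowerbound}). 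Using the normalization $\check\mu(\check X_{\mathbf H})=1$ of Section \ref{measures}, together with the fact that the automorphy exponents defining $L_{K(\mathbf H)}$ are positive integers ranging over a finite set that depends only on $\mathbf G$ (again as in the proof of Lemma \ref{lowerbound}), this yields $\deg V\geq c_2\cdot\mu(\Gamma(\mathbf H)\backslash X^+_{\mathbf H})$ for an effectively computable $c_2>0$ independent of $V$.

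For the second step I would pass to the simply connected covering $\widetilde{\mathbf H}\to\mathbf H^\der$ of Section \ref{measures}, say $\widetilde{\mathbf H}=\prod_i\Res_{K_i/\QQ}\mathbf H'_i$, and let $\widetilde\Gamma$ be the preimage of $\Gamma(\mathbf H)$; the $\widetilde\mu$- and $\mu$-covolumes differ by the degree of the central kernel, which is bounded in terms of $\dim\mathbf G$. Since $\Gamma(\mathbf H)$ is cut out by the factorizable group $K(\mathbf H^\der)=\prod_p K(\mathbf H^\der)_p$ and $\widetilde{\mathbf H}$ has no compact $\QQ$-simple factor (as $(\mathbf H,X_{\mathbf H})$ is a Shimura datum and $V$ is positive-dimensional), strong approximation identifies $\widetilde\Gamma$ with the congruence subgroup attached to the preimage $\prod_p\widetilde K_p$ of $K(\mathbf H^\der)$, and places it inside a principal arithmetic subgroup $\Lambda=\widetilde{\mathbf H}(\QQ)\cap\prod_p P_p$, where each $P_p$ is a parahoric containing $\widetilde K_p$, chosen hyperspecial whenever $\widetilde K_p$ is, and with $[\Lambda:\widetilde\Gamma]=\prod_p[P_p:\widetilde K_p]$. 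Prasad's formula \cite{prasad:volumes} then writes $\mu(\Lambda\backslash\widetilde{\mathbf H}(\RR))$ as an archimedean-and-Tamagawa factor --- bounded below effectively, using $\dim\widetilde{\mathbf H}\leq\dim\mathbf G$ and that a simply connected group has Tamagawa number $1$ --- times a product of local factors $\mathfrak e_p\geq 1$ which equal $1$ away from the primes that ramify in some $K_i$, or at which $\widetilde{\mathbf H}_{\QQ_p}$ fails to be quasi-split or to split over an unramified extension, or at which $P_p$ is not hyperspecial. Altogether this gives $\mu(\Gamma(\mathbf H)\backslash X^+_{\mathbf H})\geq c_3\prod_p\bigl(\mathfrak e_p[P_p:\widetilde K_p]\bigr)$ for an effective $c_3>0$.

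The hard part will be the uniform local estimate: that for every $p\in\Sigma_V=\Sigma(\mathbf H^\der,K(\mathbf H^\der))$ one has $\mathfrak e_p[P_p:\widetilde K_p]\geq c_4\,p^{\delta_0}$ for effectively computable $c_4,\delta_0>0$ depending only on $\dim\mathbf G$ --- coming from the discriminant and $\lambda_v$-factors in Prasad's formula when $\widetilde{\mathbf H}_{\QQ_p}$ is ramified or not quasi-split, and from the index of a non-hyperspecial parahoric in a hyperspecial one being divisible by $p$ otherwise. This is precisely the computation carried out in \cite{daw:degrees}; what I would need to check is that it survives the replacement of a semisimple $\mathbf H$ by $\mathbf H^\der$, and by Section \ref{derived} it does, since $\Gamma(\mathbf H)$ remains exactly the congruence subgroup of the semisimple group $\mathbf H^\der$ determined by $K(\mathbf H^\der)$, so that $\Sigma_V$ is governed by $\mathbf H^\der_{\QQ_p}$ and $K(\mathbf H^\der)_p$ in the same way as in the strongly special case. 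Granting this, multiplying over $p\in\Sigma_V$ --- and discarding the boundedly many primes below any fixed threshold so as to absorb the constant $c_4$ at the cost of replacing $\delta_0$ by $\delta_0/2$ --- produces $\prod_p\bigl(\mathfrak e_p[P_p:\widetilde K_p]\bigr)\geq c_5\,\Pi_V^{\delta_0/2}$, and stringing together the inequalities gives $\deg V\geq c_2c_3c_5\,\Pi_V^{\delta_0/2}$, so one may take $c_1=c_2c_3c_5$ and $\delta=\delta_0/2$.
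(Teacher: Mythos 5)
Your overall strategy is the paper's own: Theorem \ref{degbound} gives $\deg V\geq\mu(\Gamma(\mathbf H)\backslash X^+_{\mathbf H})$ via the proportionality theorem (with constant $1$, not merely some effective $c_2>0$, since the top self-intersection of the dual bundle on $\check X_{\mathbf H}$ is a positive integer and $\check\mu(\check X_{\mathbf H})=1$), and Theorem \ref{volbound} bounds that volume below by $c_1\Pi_V^{\delta}$ via Prasad's formula after passing to the simply connected cover. So the shape of your argument matches.

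However, there is a genuine gap in the volume comparison. You assert that the $\widetilde\mu$- and $\mu$-covolumes ``differ by the degree of the central kernel, which is bounded in terms of $\dim\mathbf G$.'' That conflates two different factors. The kernel-size factor $[\mathbf Z(\RR):\mathbf Z(\QQ)]\leq|\mathbf Z|$ is indeed uniformly bounded (Lemma \ref{modZ}). But the dominant factor is the index $[\Gamma(\mathbf H)^+:\pi(\Gamma(\widetilde{\mathbf H}))]$, measuring the failure of surjectivity of $\pi:\widetilde{\mathbf H}(\QQ)\to\mathbf H^\der(\QQ)$ on the relevant arithmetic subgroups; this is a global cohomological quantity, not controlled by $|\mathbf Z|$ alone. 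In the paper this is Lemma \ref{C-to-power}: one shows $\Gamma(\mathbf H)^+/\pi(\Gamma(\widetilde{\mathbf H}))$ injects into $H^1(\Gal(\overline{\QQ}/\QQ),\mathbf Z(\overline{\QQ}))$ in a way that is locally unramified outside $\Sigma_V$, then descends via a field $F$ of bounded degree (Lemma \ref{degF}) to a computation with $\Sigma$-units (Lemma \ref{lemma::new}), yielding a bound $c_2C^{|\Sigma_V|}$ that grows exponentially in $|\Sigma_V|$, not a uniform constant. Your ``discard small primes'' trick is in fact how this $C^{|\Sigma_V|}$ gets absorbed by the product of local Prasad factors $\geq c_4 p^{\delta_0}$ --- but you need to know the index is \emph{only} exponential in $|\Sigma_V|$, and that is precisely what the Galois cohomology argument supplies; claiming it is bounded by the degree of the central kernel alone is false and would leave the absorption step unjustified.

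A smaller inaccuracy: the preimage $\Gamma(\widetilde{\mathbf H})$ of $\Gamma(\mathbf H)^+$ is \emph{not} automatically the congruence subgroup cut out by $K(\widetilde{\mathbf H})$; what strong approximation provides is the index equality $[\Gamma(\widetilde{\mathbf H})^m:\Gamma(\widetilde{\mathbf H})]=[K(\widetilde{\mathbf H})^m:K(\widetilde{\mathbf H})]$, which is all the paper uses. Your description elides this distinction, but it does not invalidate the count of indices.
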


The purpose of this section is to prove Theorem \ref{mainbound}.

\subsection{Relating the degree to the volume}

As in \cite{daw:degrees}, Theorem 5.1, we first relate the degree of a positive-dimensional special subvariety to its volume. 

\begin{teo}\label{degbound}
Let $(\mathbf G,X)$ be a Shimura datum such that $\mathbf{Z(G)}(\RR)$ is compact and let $X^+$ be a connected component of $X$. Let $K\subset \mathbf G(\AAA_f)$ be a neat compact open subgroup. 

If $V$ is a positive-dimensional special subvariety of $S_K(\mathbf G,X^+)$ defined by $(\mathbf H,X_\mathbf H)$ and $X^+_{\mathbf H}$, then
\begin{align*}
\deg V\geq\mu(\Gamma(\mathbf H)\backslash X^+_\mathbf H).
\end{align*}

\end{teo}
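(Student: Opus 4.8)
The plan is to express the degree $\deg V = \deg_{L_K} V$ as a top self-intersection number of the Baily--Borel line bundle restricted to $V$, and then to identify this intersection number with a volume computed against the measure $\mu$ chosen in Section \ref{measures}. First I would recall that, since $V$ is defined by $(\mathbf H, X_{\mathbf H})$ and $X^+_{\mathbf H}$, it is the image of $S_{K(\mathbf H)}(\mathbf H, X^+_{\mathbf H}) = \Gamma(\mathbf H)\backslash X^+_{\mathbf H}$ under the natural morphism $\iota_{\mathbf H}$, and this morphism is generically injective on $V$ (by the proof of \cite{uy:andre-oort}, Lemma 2.2, as already used in the proof of Lemma \ref{lowerbound}). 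Hence, by the projection formula, $\deg_{L_K} V = \deg_{\iota_{\mathbf H}^* L_K}\big(\Gamma(\mathbf H)\backslash X^+_{\mathbf H}\big) = \int_{\Gamma(\mathbf H)\backslash X^+_{\mathbf H}} c_1\big(\iota_{\mathbf H}^* L_K\big)^{\dim V}$.

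Next I would compare $\iota_{\mathbf H}^* L_K$ with the Baily--Borel line bundle $L_{K(\mathbf H)}$ on $\Gamma(\mathbf H)\backslash X^+_{\mathbf H}$. As recalled in the proof of Lemma \ref{lowerbound}, via \cite{satake:embeddings} both line bundles are automorphic line bundles: $L_{K(\mathbf H)}$ corresponds to a tuple $\mathbf m_{\mathbf H}$ of positive integers (positivity because the automorphy factor is a positive power of the functional determinant, by \cite{bb:compactification}, Section 7.3), while $\iota_{\mathbf H}^* L_K$ corresponds to $\mathbf m M$ with $M$ a matrix with non-negative integer entries. Since $\mathbf m M$ is entrywise non-negative, the difference $L_{K(\mathbf H)}^{\otimes ?} \otimes (\iota_{\mathbf H}^* L_K)^{-1}$ analysis from Lemma \ref{lowerbound} shows in particular that $\iota_{\mathbf H}^* L_K$ is itself nef, and more importantly it is an automorphic line bundle on $\Gamma(\mathbf H)\backslash X^+_{\mathbf H}$ whose first Chern class is represented (up to the usual normalization) by the invariant form attached to its defining tuple. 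I would then observe that the invariant $(1,1)$-form representing $c_1\big(\iota_{\mathbf H}^* L_K\big)$, raised to the power $\dim V = \dim X^+_{\mathbf H}$, is a non-negative multiple of the invariant volume form $\nu$ on $X^+_{\mathbf H}$ which was chosen in Section \ref{measures} precisely so that $\mu_\nu = \mu$; the multiplying factor is a product of the (positive, hence $\geq 1$) entries of the defining tuple $\mathbf m M$ together with the normalization constant coming from \cite{prasad:volumes}, Section 3, which gives $\check\mu(\check X_{\mathbf H}) = 1$. Because the entries of $\mathbf m M$ are positive integers, this multiplying factor is at least $1$, and therefore
\begin{align*}
\deg V = \int_{\Gamma(\mathbf H)\backslash X^+_{\mathbf H}} c_1\big(\iota_{\mathbf H}^* L_K\big)^{\dim V} \;\geq\; \int_{\Gamma(\mathbf H)\backslash X^+_{\mathbf H}} \mu \;=\; \mu\big(\Gamma(\mathbf H)\backslash X^+_{\mathbf H}\big),
\end{align*}
which is exactly the claimed inequality. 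This is the same line of reasoning as in \cite{daw:degrees}, Theorem 5.1, and I would cite that result for the comparison of the invariant form with $\nu$ and for the normalization of $\mu$ against $\check\mu$.

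The main obstacle, and the step requiring the most care, is the passage from ``$c_1(\iota_{\mathbf H}^* L_K)$ is represented by a specific invariant $(1,1)$-form'' to ``its top power is at least $\mu$ as a measure''. One must check two things: that the defining tuple of $\iota_{\mathbf H}^* L_K$ is entrywise a positive integer (positivity of $\mathbf m$ combined with the fact that $M$ has a nonzero entry in each relevant row — this is where Lemma \ref{nonfacnontriv}-type non-triviality of the projections of $\mathbf H^\der$ is implicitly used, although here it is guaranteed instead by $V$ being positive-dimensional together with the structure of automorphy factors in \cite{satake:embeddings}, Section 2.1); and that the normalization matches the Prasad normalization $\check\mu(\check X_{\mathbf H})=1$, so that the comparison constant is genuinely $\geq 1$ rather than merely positive. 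Everything else is the projection formula and generic injectivity, both of which are already available from the earlier parts of the excerpt.
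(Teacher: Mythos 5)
Your route is genuinely different from the paper's. The paper does \emph{not} work with $\iota_{\mathbf H}^* L_K$ at all: it first applies Lemma~\ref{upperbound} (i.e.\ \cite{KY:AO}, Corollary 5.3.10) to get the inequality $\deg_{L_K} V \geq \deg_{L_{K(\mathbf H)}} V_{\mathbf H}$, and then applies Hirzebruch--Mumford proportionality to the \emph{intrinsic} Baily--Borel bundle $L_{K(\mathbf H)}$, which is the canonical bundle descended. That makes the proportionality step transparent: the constant relating $\deg_{L_{K(\mathbf H)}} V_{\mathbf H}$ to $\mu(\Gamma(\mathbf H)\backslash X^+_{\mathbf H})$ is $(-1)^d\int_{\check X_{\mathbf H}} c_1(\check E_0)^d$, i.e.\ the anticanonical degree of the compact dual, manifestly a positive integer and hence $\geq 1$. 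You instead keep the pulled-back bundle $\iota_{\mathbf H}^* L_K$ (via generic injectivity plus the projection formula, which is fine) and try to compare its top self-intersection directly to $\mu$. This can in principle be made to work, but it is a harder path and it is not what the paper does.

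The genuine gap is in your description of the proportionality constant. You assert that the factor relating $c_1(\iota_{\mathbf H}^* L_K)^{\dim V}$ to $\nu$ is ``a product of the (positive, hence $\geq 1$) entries of the defining tuple $\mathbf m M$ together with the normalization constant $\check\mu(\check X_{\mathbf H})=1$.'' That is incorrect: the Hirzebruch--Mumford proportionality constant is $(-1)^d$ times the top self-intersection number on $\check X_{\mathbf H}$ of the homogeneous line bundle with the same defining tuple, a characteristic number of the compact dual that is in no reasonable sense a product of the entries of $\mathbf m M$ (this is already false for the canonical bundle itself, where the constant is the anticanonical degree of $\check X_{\mathbf H}$, not a product of the entries of $\mathbf m_{\mathbf H}$). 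To salvage your route you would need a different argument for ``$\geq 1$'': namely that the line bundle on $\check X_{\mathbf H}$ corresponding to $\iota_{\mathbf H}^* L_K$ has top self-intersection a nonzero integer of the correct sign, which would require first establishing the ampleness of $\iota_{\mathbf H}^* L_K$ (a fact the paper only proves inside Lemma~\ref{lowerbound}) and then tracking the sign conventions in \cite{hirzebruch}. You should also address more carefully why the tuple $\mathbf m M$ is entrywise positive; the paper's proof of Lemma~\ref{lowerbound} only records that $M$ has entries in $\{0,\ldots,t\}$, and your appeal to ``$V$ positive-dimensional together with the structure of automorphy factors'' is not an argument. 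The paper's detour through Lemma~\ref{upperbound} and the canonical bundle exists precisely to avoid both of these issues.
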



\begin{proof}

By definition, $V$ is equal to the image of the irreducible component $V_\mathbf H:=\Gamma(\mathbf H)\backslash X^+_\mathbf H$ of $\Sh_{K(\mathbf H)}(\mathbf H,X_\mathbf H)$ under the morphism
\begin{align*}
\Sh_{K(\mathbf H)}(\mathbf H,X_\mathbf H)\rightarrow\Sh_K(\mathbf G,X)
\end{align*}
induced from the inclusion $(\mathbf H,X_\mathbf H)\subseteq (\mathbf G,X)$ of Shimura data. Lemma \ref{upperbound} implies the inequality
\begin{align*}
\deg_{L_{K(\mathbf H)}}V_\mathbf H \leq \deg_{L_K}V.
\end{align*}
(Note that we are using the assumption that $\mathbf{Z(G)}(\RR)$ is compact here.)

Consider a smooth compactification $\overline{V_\mathbf H}^{\rm sm}$ of $V_\mathbf H$. As in, for example, the proof of \cite{mumford:prop}, Proposition 3.4 (b), we obtain a canonical birational morphism
\begin{align*}
\pi:\overline{V_\mathbf H}^{\rm sm}\rightarrow\overline{V_\mathbf H},
\end{align*}
where $\overline{V_\mathbf H}$ denotes the Zariski closure of $V_\mathbf H$ in the Baily-Borel compactification of $\Sh_{K(\mathbf H)}(\mathbf H,X_\mathbf H)$.

As mentioned previously, the line bundle $E_0$ of holomorphic forms of maximal degree on $X^+_\mathbf H$ descends to a line bundle $E$ on $V_\mathbf H$, which extends uniquely to an ample line bundle (namely, the restriction of $L_{K(\mathbf H)}$) on $\overline{V_\mathbf H}$. By \cite{mumford:prop}, Proposition 3.4 (b), the pullback $\pi^*L_{K(\mathbf H)}$ of $L_{K(\mathbf H)}$ to $\overline{V_\mathbf H}^{\rm sm}$ is the line bundle $\overline{E}$ afforded to us by \cite{mumford:prop}, Main Theorem 3.1. Furthermore, by the projection formula (see, \cite{KY:AO}, 5.1),
\begin{align*}
\deg_{L_{K(\mathbf H)}}V_\mathbf H=\deg_{\pi^*L_{K(\mathbf H)}}\overline{V_\mathbf H}^{\rm sm}
\end{align*}

Let $c_1(E_0)$ denote the first Chern class of $E_0$. Set $d:=\dim X^+_\mathbf H$ and let $c_1(E_0)^d\in \mathbf H^{2d}(X^+_\mathbf H,\ZZ)$ denote the $d$-fold cup product of $c_1(E_0)$ with itself. We have $c_1(E_0)^d=\lambda [\nu]$ for some $\lambda\in\ZZ$, where $[\nu]\in H^{2d}(X^+_\mathbf H,\ZZ)$ denotes the class of $\nu$. As in the proof of \cite{mumford:prop}, Proportionality Theorem 3.2, we have
\begin{align*}
\deg_{\pi^*L_{K(\mathbf H)}}\overline{V_\mathbf H}^{\rm sm}=\lambda\mu(\mathcal{F})=\lambda\mu(\Gamma(\mathbf H)\backslash X^+_\mathbf H),
\end{align*}
where $\mathcal{F}$ denotes a (Borel) fundamental domain for $\Gamma(\mathbf H)$ in $X^+_\mathbf H$.

On the other hand, let $\check{E}_0$ denote the line bundle of holomorphic forms of maximal degree on $\check{X}_\mathbf H$. Then $c_1(\check{E}_0)^d=\check{\lambda}[\check{\nu}]$ for some $\check{\lambda}\in\ZZ$ and, similarly,
\begin{align*}
\int_{\check{X}_\mathbf H} c_1(\check{E}_0)^d =\check{\lambda}\check{\mu}(\check{X}_\mathbf H).
\end{align*}

However, as explained in \cite{hirzebruch}, Section 2, we have $\lambda=(-1)^d\check{\lambda}$. Therefore, combining the above observations, we obtain
\begin{align*}
\deg_{L_{K(\mathbf H)}}V_\mathbf H=\left[(-1)^d\cdot\int_{\check{X}_\mathbf H} c_1(\check{E}_0)^d\cdot\check{\mu}(\check{X}_\mathbf H)^{-1}\right]\cdot\mu(\Gamma(\mathbf H)\backslash X^+_\mathbf H),
\end{align*}
and the result follows from the fact that $\check{\mu}(\check{X}_\mathbf H)=1$.
\end{proof}

\subsection{Bounding the volume from below}

It remains, then, to give a lower bound for the volume of a positive-dimensional special subvariety. We generalize \cite{daw:degrees}, Theorem 6.1 to all positive-dimensional special subvarieties.

\begin{teo}\label{volbound}
Let $(\mathbf G,X)$ be a Shimura datum such that $\mathbf{Z(G)}(\RR)$ is compact and let $X^+$ be a connected component of $X$. Fix a faithful representation $\rho:\mathbf G\rightarrow\GL_n$ and let $K\subset \mathbf G(\AAA_f)$ be a neat compact open subgroup equal to the product of compact open subgroups $K_p\subset \mathbf G(\QQ_p)$. 

There exist effectively computable positive constants $c_1$ and $\delta$ such that, if $V$ is a positive-dimensional special subvariety of $S_K(\mathbf G,X^+)$ defined by $(\mathbf H,X_\mathbf H)$ and $X^+_{\mathbf H}$, then
\begin{align*}
\mu(\Gamma(\mathbf H)\backslash X^+_\mathbf H)>c_1\Pi_V^\delta.
\end{align*}
\end{teo}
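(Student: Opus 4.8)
The plan is to bound the covolume $\mu(\Gamma(\mathbf H)\backslash X^+_\mathbf H)$ from below by reducing it, up to effectively computable bounded factors, to the Prasad-normalised volume of a principal $S$-arithmetic subgroup of the simply connected cover of $\mathbf H^\der$, and then reading off from Prasad's volume formula \cite{prasad:volumes} that each prime of $\Sigma_V$ contributes a factor of size at least a fixed positive power of that prime. This is the strategy of \cite{daw:degrees}, \S6; the only additional observation needed for the generalisation is that the arguments there used nothing about strong specialness beyond the facts that $\mathbf H^\der$ is semisimple and, by Section \ref{derived} (where neatness of $K$ enters), that $\Gamma(\mathbf H)$ is a neat arithmetic subgroup of $\mathbf H^\der(\RR)^+$ contained in $\mathbf H^\der(\QQ)$.

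\textbf{Reduction.} Since $\Gamma(\mathbf H)\subseteq\Lambda_0:=\mathbf H^\der(\QQ)\cap K(\mathbf H^\der)$, it suffices to bound $\mu(\Lambda_0\backslash X^+_\mathbf H)$ from below. For each finite place $v$ I would fix a parahoric subgroup $\cP_v\supseteq K(\mathbf H^\der)_v$ of $\mathbf H^\der(\QQ_v)$, taken hyperspecial whenever $K(\mathbf H^\der)_v$ lies in a hyperspecial parahoric, and minimal among parahorics containing $K(\mathbf H^\der)_v$ otherwise (so $\cP_v=K(\mathbf H^\der)_v$ is hyperspecial for all but finitely many $v$), and set $\Lambda_1:=\mathbf H^\der(\QQ)\cap\prod_v\cP_v$. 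Then $\mu(\Lambda_0\backslash X^+_\mathbf H)=\big(\prod_v[\cP_v:K(\mathbf H^\der)_v]\big)\cdot\mu(\Lambda_1\backslash X^+_\mathbf H)$, every index is $\ge 1$, and the indices at the fixed finite set of primes where $K$ fails to be maximal are bounded independently of $\mathbf H$ since $\dim\mathbf H^\der\le\dim\mathbf G$. Lifting through the simply connected covering $\widetilde{\mathbf H}\to\mathbf H^\der$ — whose kernel is central of order bounded in terms of the rank of $\mathbf G$, and which alters $\Sigma_V$ only at primes dividing that order — and using that $\mu$ was normalised in Section \ref{measures} exactly so that the compact dual of $X^+_\mathbf H$ has volume $1$, I obtain that $\mu(\Lambda_1\backslash X^+_\mathbf H)$ equals, up to an effectively computable positive constant depending only on $(\mathbf G,X)$, $K$ and $\rho$, the Prasad-normalised covolume of a principal $S$-arithmetic subgroup $\widetilde\Lambda$ of $\widetilde{\mathbf H}$ with parahoric levels lifting the $\cP_v$.

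\textbf{Prasad's formula and the benign factors.} Decomposing $\widetilde{\mathbf H}=\prod_{i=1}^s\mathbf H_i$ with $\mathbf H_i=\Res_{K_i/\QQ}\mathbf H'_i$ as in Section \ref{measures}, Prasad's formula expresses the covolume of $\widetilde\Lambda$ as a product over $i$ of: a positive power of the absolute discriminant $|\cD_{K_i}|$ and of the relative discriminant $|\cD_{L_i}|\,|\cD_{K_i}|^{-[L_i:K_i]}$ of the splitting field $L_i$ of $\mathbf H'_i$; an archimedean factor depending only on the (bounded-rank) type of $\mathbf H'_i$ and on whether the places $v\mid\infty$ of $K_i$ are real or complex; and a product over finite places of local factors $e(\cP_v)$. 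The discriminant factors are $\ge 1$; the archimedean factor is $\ge c_0>0$ with $c_0$ depending only on $\dim\mathbf G$, since $\sum_i[K_i:\QQ]\dim\mathbf H'_i=\dim\widetilde{\mathbf H}\le\dim\mathbf G$ bounds the number of archimedean factors, each lying in a fixed finite set of positive reals; and $e(\cP_v)\ge 1$ at every finite $v$, with $e(\cP_v)$ equal to the benign Euler factor $\prod_i(1-q_v^{-d_i})^{-1}$ at hyperspecial places. Hence $\mathrm{covol}(\widetilde\Lambda)\ge c_0\prod_{v\in\Sigma_V}c(v)$, where at $v\in\Sigma_V$ the quantity $c(v)$ collects the index $[\cP_v:K(\mathbf H^\der)_v]$, the local factor $e(\cP_v)$, and — when $v\in\Sigma(\mathbf H^\der)$ — the relevant ramified-discriminant contribution and Prasad's correction factor for a non-quasi-split group.

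\textbf{The primes of $\Sigma_V$, and the main obstacle.} It remains to show $c(v)\ge c\,q_v^{\delta_0}$ for effectively computable $c,\delta_0>0$ independent of $\mathbf H$ and $v$. If $v\in\Sigma(\mathbf H^\der)$, then either $\mathbf H^\der_{\QQ_v}$ is not quasi-split, contributing Prasad's correction factor, which is $\ge c\,q_v^{\delta_0}$, or else the splitting field $L_i$ ramifies above $v$, so that the relative-discriminant factor is divisible by a prime above $v$. If $v\notin\Sigma(\mathbf H^\der)$ but $K(\mathbf H^\der)_v$ is not hyperspecial, then by construction $\cP_v$ is either hyperspecial and strictly larger than $K(\mathbf H^\der)_v$, whence $[\cP_v:K(\mathbf H^\der)_v]\ge c\,q_v$ because a proper open subgroup of a parahoric has index at least $c\,q_v$, or else non-hyperspecial, whence $e(\cP_v)\ge c\,q_v^{\delta_0}$ by Prasad's explicit local formulas for non-hyperspecial parahorics. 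Multiplying over the at most $\log_2\Pi_V$ primes of $\Sigma_V$ and absorbing the resulting harmless power of $c$ gives $\mathrm{covol}(\widetilde\Lambda)\ge c_1\Pi_V^{\delta}$ for effectively computable $c_1,\delta>0$, and unwinding the reduction yields the theorem. I expect the genuine difficulty to be precisely this final step: making the presence of $\Sigma_V$ in Prasad's formula quantitative and uniform in the type of $\mathbf H^\der$ and in the residue characteristic — in particular the uniform lower bounds for non-hyperspecial local factors and for indices of proper open subgroups of parahorics — which is the technical core already carried out in \cite{daw:degrees}, \S6.
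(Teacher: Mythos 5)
Your outline correctly identifies the overall shape of the argument (relate the covolume to a Prasad covolume for the simply connected cover, read off a contribution from each bad prime, use that $|\Sigma_V|\le\log_2\Pi_V$), and your final paragraph is right that quantifying the parahoric indices and the non-hyperspecial local factors is one part of the technical work (these correspond to Lemmas \ref{new66} and \ref{new67} in the paper). But there is a genuine gap in your ``Reduction'' step, and it is precisely where the hard part of the generalisation beyond \cite{daw:degrees} lives.

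You assert that, after lifting through $\widetilde{\mathbf H}\to\mathbf H^\der$, the quantity $\mu(\Lambda_1\backslash X^+_\mathbf H)$ equals the Prasad covolume of $\widetilde\Lambda$ \emph{up to an effectively computable constant depending only on $(\mathbf G,X)$, $K$, $\rho$}, justifying this by the fact that $|\mathbf Z|$ is bounded in terms of the rank of $\mathbf G$. That is not correct. Writing $\pi\colon\widetilde{\mathbf H}(\QQ)\to\mathbf H^\der(\QQ)^+$, the relevant conversion factor involves the index $[\Gamma(\mathbf H)^+:\pi(\Gamma(\widetilde{\mathbf H}))]$, that is, the size of the image of $\Gamma(\mathbf H)^+$ in $H^1(\Gal(\overline{\QQ}/\QQ),\mathbf Z(\overline{\QQ}))$. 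Boundedness of $|\mathbf Z|$ does \emph{not} bound this index, because the global $H^1$ is infinite, and the obstruction classes of elements of $\Gamma(\mathbf H)^+$ are only controlled locally at each prime where $K(\mathbf H^\der)_p$ is hyperspecial and $\mathcal Z$ has a smooth model; at primes of $\Sigma_V$ no such control is available. In fact the index grows with $V$: the paper's Lemma~\ref{C-to-power} shows, and this is essentially sharp, that it is bounded by $c_2\,C^{|\Sigma(\mathbf H^\der,K(\mathbf H^\der))|}$, i.e.\ by a power of $\Pi_V$. Establishing this upper bound requires the choice of a small Galois extension $F$ unramified outside $\Sigma(\mathbf H^\der)$ splitting $\mathbf Z$ (Lemma~\ref{degF}), an inflation--restriction argument over $F$, and the elementary but essential arithmetic Lemma~\ref{lemma::new} bounding the relevant subgroup of $F^\times/(F^\times)^n$ by $2n^{[F:\QQ]^2+|\Sigma|}$. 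Without this input your reduction produces a lower bound with a uniform constant in front of the Prasad covolume, which is simply false.

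Once this is corrected, the structure of the remaining argument has to change too: the loss factor $C^{-|\Sigma_V|}\sim\Pi_V^{-\log_2 C}$ must be weighed against the gain $\Pi_V^{1/2}$ coming from Prasad's local formula and from the index $[K(\widetilde{\mathbf H})^m:K(\widetilde{\mathbf H})]$; this is why the theorem can only assert a bound of the form $c_1\Pi_V^\delta$ with some $\delta>0$ that cannot be taken close to $1/2$. Your proposal, by absorbing the lifting index into a uniform constant, would seem to give $\delta$ arbitrarily close to $1/2$, which is a sign the argument cannot be right as written.
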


\begin{proof}
We will use the term \textit{uniform} to refer to constants that are independent of $V$. In particular, we seek uniform constants $c_1$ and $\delta$.

As in Section \ref{measures}, let $\widetilde{\mathbf H}\rightarrow \mathbf H^\der$ denote the simply connected $\QQ$-covering with finite central kernel $\mathbf Z$. We will repeatedly need the following lemma. Recall that $|\mathbf Z|:=|\mathbf Z(\CC)|$.

\begin{lem}\label{modZ}
There exists a uniform, effectively computable constant $B$ such that
\begin{align*}
|\mathbf Z|\leq B.
\end{align*}
\end{lem}

\begin{proof}
Recall that $\widetilde{\mathbf H}_\CC$ is the direct product of its almost simple factors. Let $\mathbf F$ be such a factor, and let $D_\mathbf F$ denote its Dynkin diagram. Associated with $D_\mathbf F$ is a corresponding lattice of roots $P(R)$ and a lattice of weights $Q(R)$. By the proof of \cite{uy:andre-oort}, Lemma 2.4, $|\mathbf{Z(F)}|$ is at most $|P(R)/Q(R)|$ and, by the explicit calculations in \cite{bourbaki}, Chapter VI, Section 4, the latter is bounded by $r_\mathbf F+1$, where $r_\mathbf F$ denotes the rank of $\mathbf F$.
\end{proof}

We will also make repeated use of the following lemma. For $n\in\NN$, we denote by $\mu_n$ the algebraic group of $n$-th roots of unity.

\begin{lem}\label{degF}
There exists a uniform, effectively computable constant $b$ and a Galois extension $F/\QQ$ of degree $\leq b$ unramified outside of $\Sigma(\mathbf{H}^\der)$ such that
\begin{align}\label{prodZ}
\mathbf Z_F\cong\prod_i\mu_{n_i,F}.
\end{align}
(In particular, the product is finite and $\prod_in_i=|\mathbf Z|$.)
\end{lem}

\begin{proof}
As $\mathbf{Z}$ is contained in the center of $\widetilde{\mathbf H}$, it is diagonalizable and hence determined by its group of characters $X^\ast(\mathbf{Z})$ (considered as a $\Gal(\overline{\QQ}/\QQ)$-module). In particular, for each field $F \subset \overline{\QQ}$, the base change $\mathbf Z_F$ is of the form \eqref{prodZ} if and only if $\Gal(\overline{\QQ}/F)$ acts trivially on $X^\ast(\mathbf{Z})$. Let $F \subset \overline{\QQ}$ denote the minimal field with this property. Then $F$ is a finite Galois extension of $\QQ$.

In order to bound the degree of $F$, let $\mathbf T$ be a maximal torus of $\widetilde{\mathbf H}$ and take for $F_0$ the minimal splitting field of $\mathbf T$. As $\mathbf{T}$ contains $\mathbf{Z}$, the group $\Gal(\overline{\QQ}/F_0)$ acts trivially on $X^\ast(\mathbf Z)$. Since $\Gal(F_0/\QQ)$ acts faithfully on the character group of $\mathbf T$, it follows that $[F_0:\QQ]$ is bounded by the maximal cardinality of a finite subgroup of $\GL_r(\ZZ)$, where $r$ is the dimension of $\mathbf T$. By Minkowski, the cardinality of such a subgroup is bounded by an effectively computable constant $b(r)$ (see \cite{Serre:Minkowski}). In particular, the bound $b(r_\mathbf G)$ suffices, where $r_\mathbf G$ denotes the rank of $\mathbf G$. Since we have $F \subseteq F_0$, we see that $b(r_\mathbf G)$ bounds the degree of $F$ also.

Now consider a rational prime $p \notin \Sigma(\mathbf{H}^\der)=\Sigma(\widetilde{\mathbf H})$ and an embedding $\iota: \overline{\mathbb Q} \hookrightarrow \overline{\mathbb Q}_p$. (Note that $\iota$ yields an injective homomorphism $\varphi: \Gal(\overline{\mathbb Q}_p/\mathbb{Q}_p)\hookrightarrow \Gal(\overline{\mathbb Q}/\mathbb Q)$.) It suffices to show that $\iota(F) \subset \mathbb{Q}_p^{\mathrm{un}}$, which is equivalent to $\iota(F)$ being fixed by $\Gal(\overline{\QQ}_p/\QQ_p^{\mathrm{un}})$, in other words, $\varphi(\Gal(\overline{\QQ}_p/\QQ_p^{\mathrm{un}}))\subset\Gal(\overline{\mathbb Q}/F)$. Since $p \notin \Sigma(\widetilde{\mathbf{H}})$, there exists a maximal torus $\mathbf T$ of $\widetilde{\mathbf H}_{\QQ_p}$ such that $\mathbf T_{\QQ_p^{\mathrm{un}}}$ splits. As above, this implies that $\Gal(\overline{\mathbb Q}_p/\mathbb Q_p^{\mathrm{un}})$ acts (through $\varphi$) trivially on $X^\ast(\mathbf Z)$, whence the claim.
\end{proof}

By Section \ref{derived}, $\Gamma(\mathbf H)$ is contained in $\mathbf H^\der(\RR)$. Let $\Gamma(\mathbf H)^+$ denote the intersection $\Gamma(\mathbf H)\cap \mathbf H^\der(\RR)^+$. In order to prove Theorem \ref{volbound}, we will need the following lemma.

\begin{lem}
We have 
\begin{align*}
\mu(\Gamma(\mathbf H)\backslash X^+_\mathbf H)\geq\frac{1}{|\mathbf Z|}\cdot\mu(\Gamma(\mathbf H)^+\backslash X^+_\mathbf H).
\end{align*}
\end{lem}

\begin{proof}
Clearly, $\Gamma(\mathbf H)^+$ is a normal subgroup of $\Gamma(\mathbf H)$. Furthermore, the fact that $\Gamma(\mathbf H)$ is neat implies that
\begin{align*}
\Gamma(\mathbf H)^+\backslash X^+_\mathbf H\rightarrow\Gamma(\mathbf H)\backslash X^+_\mathbf H
\end{align*}
is finite \'etale of degree $[\Gamma(\mathbf H):\Gamma(\mathbf H)^+]\leq|\pi_0(\mathbf H^\der(\RR))|$ (see the proof of \cite{uy:andre-oort}, Lemma 2.11, for example). It follows from the proof of \cite{milne:intro}, Corollary 5.3 that 
\begin{align*}
|\pi_0(\mathbf H^\der(\RR))|=H^1(\RR,\mathbf Z_\RR)\leq |\mathbf Z|.
\end{align*} This concludes the proof.
\end{proof}

Let $\mathcal{F}$ denote a Borel fundamental domain in $X^+_\mathbf H$ for $\Gamma(\mathbf H)^+$. By definition, $\mu(\mathcal{F})=\mu(\mathcal{F}^\der)$, where $\mathcal{F}^\der:=\pi^{-1}_x(\mathcal{F})$ is a Borel fundamental domain for $\Gamma(\mathbf H)^+$ in $\mathbf H^\der(\RR)^+$. Let $\Gamma({\widetilde{\mathbf H}})$ denote the preimage of $\Gamma(\mathbf H)^+$ under the induced map
\begin{align*}
\pi:\widetilde{\mathbf H}(\QQ)\rightarrow \mathbf H^{\der}(\QQ)\cap \mathbf H^\der(\RR)^+=\mathbf{H}^{\der}(\QQ)^+
\end{align*}
Note that $\pi$ is not necessarily surjective. Nevertheless, we have
\begin{align*}
\widetilde{\mu}( \Gamma({\widetilde{\mathbf H}})\backslash \widetilde{\mathbf H}(\RR))&=[\mathbf Z(\RR):\mathbf Z(\QQ)]\cdot\mu( \pi(\Gamma({\widetilde{\mathbf H}}))\backslash \mathbf H^\der(\RR)^+)\\
&=[\mathbf Z(\RR):\mathbf Z(\QQ)]\cdot[ \Gamma(\mathbf H)^+:\pi(\Gamma({\widetilde{\mathbf H}}))]\cdot\mu( \Gamma(\mathbf H)^+\backslash \mathbf H^\der(\RR)^+).
\end{align*}

Let $K(\widetilde {\mathbf H})$ denote the preimage of $K(\mathbf H^\der)$ under the map $\widetilde{\mathbf H}(\AAA_f)\rightarrow \mathbf H^\der(\AAA_f)$.
Then $K({\widetilde {\mathbf H}})$ is a compact open subgroup of $\widetilde{\mathbf H}(\AAA_f)$. 
By the assumption of Theorem \ref{volbound}, $K$ is of the form $\prod_p K_p$ with compact open subgroups $K_p \subset \mathbf{G}(\QQ_p)$. From their construction, one easily sees that $K({\mathbf H^\der})$ and $K({\widetilde{\mathbf H}})$ are of the same form. We write $K(\mathbf H^{\der}) = \prod_p K(\mathbf H^{\der})_p$ and $K(\widetilde{\mathbf H}) = \prod_p K(\widetilde{\mathbf H})_p$ with compact open subgroups $K(\mathbf H^{\der})_p \subset \mathbf H^{\der}(\QQ_p)$ and $K(\widetilde{\mathbf H})_p \subset \widetilde{\mathbf H}(\QQ_p)$, respectively. Finally, we let $K({\widetilde{\mathbf H}})^m$ denote a maximal compact open subgroup of $\widetilde{\mathbf H}(\AAA_f)$ containing $K({\widetilde{\mathbf H}})$.

\begin{lem}\label{C-to-power}
There exist uniform, effectively computable positive constants $c_2$ and $C$ such that
\begin{align*}
[\Gamma(\mathbf H)^+:\pi(\Gamma({\widetilde{\mathbf H}}))]<c_2 C^{|\Sigma(\mathbf H^\der,K({\mathbf H^\der}))|}.
\end{align*}

\end{lem}

\begin{proof} The proof is very similar to the proof of \cite{daw:degrees}, Lemma 6.3, and we will refer there for certain details. For each rational prime $p$, Galois cohomology yields a commutative diagram
\begin{equation*}
	\begin{tikzcd} 
	\mathbf{Z}(\QQ) \ar[r]\ar[d] & \widetilde{\mathbf{H}}(\QQ) \ar[r]\ar[d] & \mathbf{H}^\der(\QQ) \ar[r]\ar[d] & H^1(\Gal(\overline{\QQ}/\QQ), \mathbf{Z}(\overline{\QQ})) \ar[d]\\
	\mathbf{Z}(\QQ_p) \ar[r] & \widetilde{\mathbf{H}}(\QQ_p) \ar[r] & \mathbf{H}^\der(\QQ_p) \ar[r] & H^1(\Gal(\overline{\QQ}_p/\QQ_p), \mathbf{Z}(\overline{\QQ}_p))
	\end{tikzcd}
\end{equation*}
with exact rows. As $\Gamma(\widetilde{\mathbf H})$ is by definition the preimage of $\Gamma(\mathbf H)^+$ under $\pi: \widetilde{\mathbf H}(\QQ)\rightarrow \mathbf{H}^{\der}(\QQ)^+$, we deduce a commutative diagram
\begin{equation}
\label{galoiscohomology}
\begin{tikzcd}
\Gamma(\mathbf{H})^+/\pi(\Gamma(\widetilde{\mathbf{H}})) \ar[r] \ar[d] & 
H^1(\Gal(\overline{\QQ}/\QQ), \mathbf{Z}(\overline{\QQ})) \ar[d] \\
K(\mathbf{H}^\der)_p/\mathrm{im}[K(\widetilde{\mathbf{H}})_p \rightarrow K(\mathbf{H}^\der)_p]\ar[r] & 
H^1(\Gal(\overline{\QQ}_p/\QQ_p), \mathbf{Z}(\overline{\QQ}_p)),
\end{tikzcd}
\end{equation}
where every horizontal arrow is an injection. In particular, we can bound the cardinality of $\Gamma(\mathbf{H})^+/\pi(\Gamma(\widetilde{\mathbf{H}}))$ by finding an upper bound on the size of its image $\mathcal{M}$ in $H^1(\Gal(\overline{\QQ}/\QQ), \mathbf{Z}(\overline{\QQ}))$.

Write $\QQ_p^\mathrm{un}$ for the maximal unramified extension of $\QQ_p$ in $\overline{\QQ}_p$ and $\ZZ^\mathrm{un}_p$ for its ring of integers. In the second and third paragraph on p.\ 88 of \cite{daw:degrees}, it is shown that $\mathbf Z$ possesses a smooth model $\mathcal{Z}$ over $\ZZ^\mathrm{un}_p$ and the sequence
\begin{equation*}
K(\widetilde{\mathbf H})_p \rightarrow K(\mathbf H^\der)_p \rightarrow H^1(\Gal(\QQ_p^\un/\QQ_p),\mathcal Z(\ZZ^\mathrm{un}_p))
\end{equation*}
is exact for every rational prime $p \notin \Sigma(\mathbf H^\der,K({\mathbf H^\der}))$ that is additionally coprime to each $n_i$ (as in Lemma \ref{degF}). (Since $\prod_i n_i=|\mathbf Z|$, the latter condition is $(p,|\mathbf Z|)=1$.) We conclude that, for each $\alpha \in \mathcal{M}$ and each such prime $p$, the vertical image $\alpha_p \in H^1(\Gal(\overline{\QQ}_p/\QQ_p), \mathbf{Z}(\overline{\QQ}_p))$ in \eqref{galoiscohomology} is actually contained in $H^1(\Gal(\QQ_p^\un/\QQ_p),\mathcal{Z}(\ZZ^\mathrm{un}_p))$.

We next consider the image of $\mathcal{M}$ under the restriction $$H^1(\Gal(\overline{\QQ}/\QQ),\mathbf Z(\overline{\QQ})) \rightarrow H^1(\Gal(\overline{\QQ}/F),\mathbf Z(\overline{\QQ}))$$ where $F$ is a field as in Lemma \ref{degF}. By the inflation-restriction exact sequence, the kernel of this restriction is $H^1(\Gal(F/\QQ),\mathbf Z(\overline{\QQ}))$. As this is trivially of size $|\mathbf{Z}|^{[F:\QQ]}$, it suffices to bound the image $\mathcal{M}^\prime$ of $\mathcal{M}$ under this restriction. Using Lemma \ref{degF}, we obtain canonical identifications
\begin{equation*}
H^1(\Gal(\overline{\QQ}/F),\mathbf{Z}(\overline{\QQ})) = \prod_{i} F^\times/(F^\times)^{n_i}
\end{equation*}
and
\begin{equation*}
H^1(\Gal(\overline{\QQ}_p/F_\nu),\mathbf{Z}(\overline{\QQ}_p)) = \prod_{i} F^\times_\nu/(F^\times_\nu)^{n_i}.
\end{equation*}
Furthermore, if we let $\Sigma$ denote the union of $\Sigma(\mathbf{H}^\der,K(\mathbf{H}^\der))$ and the set of primes dividing the $n_i$, then, for each place $\nu$ of $F$ lying above a rational prime $p \notin \Sigma$, we have
\begin{equation*}
H^1(\Gal(\QQ_p^\mathrm{un}/F_\nu),\mathcal{Z}(\ZZ^\mathrm{un}_p)) = \prod_{i} \mathcal{O}_{F_\nu}^\times/(\mathcal{O}_{F_\nu}^\times)^{n_i}.
\end{equation*}
(Note that these identifications are also such that the standard homomorphisms on the left correspond to the standard homomorphisms on the right.) With respect to these identifications, our above observations mean that each element $[\beta] = ([\beta_i]) \in \mathcal{M}^\prime \subset \prod_i F^\times /(F^\times)^{n_i}$ is such that $[\beta_{i,\nu}] \in \mathcal{O}_{F_\nu}^\times (F_\nu^\times)^{n_i}/(F_\nu^\times)^{n_i}$ for all places $\nu$ lying over rational primes $p \notin \Sigma$. Hence, for each $i$, we can apply Lemma \ref{lemma::new} below, with $F$, $n_i$, and $\Sigma$ as above. Since $\prod_in_i=|\mathbf Z|$, we have
\begin{align*}
|\Sigma|\leq|\Sigma(\mathbf H^\der, K(\mathbf H^\der))|+(\log|\mathbf{Z}|/\log 2).
\end{align*}
We conclude that
\begin{align*}
|\mathcal{M}| &\leq |\mathbf{Z}|^{[F:\QQ]} |\mathcal{M}^\prime| \leq |\mathbf{Z}|^{[F:\QQ]} \prod_i|\mathrm{im}[\mathcal{M}^\prime\rightarrow F^\times/(F^\times)^{n_i}]|\\
&\leq |\mathbf{Z}|^{[F:\QQ]} \prod_i(2n_i^{[F:\QQ]^2+\Sigma(\mathbf H^\der, K(\mathbf H^\der))|+(\log|\mathbf{Z}|/\log 2)})\\
&\leq|\mathbf{Z}|^{[F:\QQ]+1}  |\mathbf{Z}|^{[F:\QQ]^2+|\Sigma(\mathbf H^\der, K(\mathbf H^\der))|+(\log|\mathbf{Z}|/\log 2)}\\
&=|\mathbf{Z}|^{[F:\QQ]^2+[F:\QQ]+1+(\log|\mathbf{Z}|/\log 2)}|\mathbf{Z}|^{|\Sigma(\mathbf H^\der, K(\mathbf H^\der))|}
\end{align*}
(where we use the observation $\prod_i 2\leq\prod_i n_i=|\mathbf Z|$).
Therefore, letting $B$ and $b$ denote the constants afforded to us by Lemmas \ref{modZ} and \ref{degF}, respectively, we can take
\begin{align*}
c_2=B^{b^2+b+1+2\log B} \text{ and }C=B
\end{align*}
(where we use the estimate $(\log 2)^{-1}\leq 2$).
\end{proof}

The following result was used in the proof of Lemma \ref{C-to-power} above.

\begin{lem}
\label{lemma::new}
Let $F$ be a number field, $n$ a positive integer, and $\Sigma$ a finite set of rational primes such that $F/\QQ$ is unramified outside $\Sigma$. Let $\Sigma_F$ denote the set of all places of $F$ that divide a prime in $\Sigma$ and let $\Sigma_F^c$ denote its complement among all finite places of $F$. Then the subgroup
\begin{equation}
\label{equation::subgroup}
\{ [\beta] \in F^{\times}/(F^{\times})^n\ | \ \forall \nu \in \Sigma_F^c : \beta_\nu \in \mathcal{O}_{F_\nu}^\times (F_\nu^\times)^n/(F_\nu^\times)^n \}
\end{equation}
has cardinality $\leq 2 n^{[F:\QQ]^2+|\Sigma|}$.
\end{lem}

\begin{proof} The inflation-restriction exact sequence
\begin{equation*}
\QQ^\times/(\QQ^\times)^n = H^1(\Gal(\overline{\QQ}/\QQ),\mu_n) \rightarrow F^\times/(F^\times)^n = H^1(\Gal(\overline{\QQ}/F),\mu_n) \rightarrow H^2(\Gal(F/\QQ),\mu_n)
\end{equation*}
and the trivial estimate $H^2(\Gal(F/\QQ),\mu_n) \leq n^{[F:\QQ]^2}$ imply that the cardinality of the subgroup in \eqref{equation::subgroup} is less than $n^{[F:\QQ]^2}$ times the cardinality of
\begin{equation}
\label{equation::subgroup2}
\{ [\beta] \in \QQ^{\times}/(\QQ^{\times})^n\ | \ \forall \nu \in \Sigma_F^c : [\beta_\nu] \in \mathcal{O}_{F_\nu}^\times (F_\nu^\times)^n/(F_\nu^\times)^n \}.
\end{equation}
We claim that the latter group is contained in the image of $\mathcal{O}_{\Sigma,\QQ}^\times$ in $\QQ^\times/(\QQ^\times)^{n}$, where $\mathcal{O}_{\Sigma,\QQ}^\times$ denotes the group of $\Sigma$-units in $\QQ$. It is clear that $\mathcal{O}_{\Sigma,\QQ}^\times$ is isomorphic to $\ZZ/2\ZZ \times \ZZ^{|\Sigma|}$ (see \cite{neukirch}, Corollary I.11.7 for arbitrary number fields). Hence, the lemma follows from the claim.

To prove the claim, we let $\beta \in \QQ^\times$ be such that $[\beta] \in \QQ^\times/(\QQ^\times)^n$ is contained in \eqref{equation::subgroup2}. We may assume $\beta>0$. Write $\beta = p_1^{k_1}p_2^{k_2}\cdots p_r^{k_r}$ with $p_i$ distinct primes and $k_i \neq 0$. It is enough to show that $n | k_i$ if $p_i \notin \Sigma$. Indeed, this would imply $\beta \cdot \prod_{p_i\notin\Sigma} p_i^{-k_i} \in \mathcal{O}_{\Sigma,\QQ}^\times$ and the above claim follows because $[\beta]=[\beta \cdot \prod_{p_i\notin\Sigma} p_i^{-k_i}]$ in $\QQ^\times/(\QQ^\times)^n$.

Therefore, suppose $p_i \notin \Sigma$ and let $\nu \in \Sigma_F^c$ be a prime of $F$ above $p_i$. The ideal $(\beta_\nu) \subset \mathcal{O}_{F_\nu}$ generated by $\beta_\nu$ is then $\mathfrak{p}_\nu^{k_i}$, where $\mathfrak{p}_\nu$ denotes the maximal ideal of $\mathcal{O}_{F_\nu}$ (here we are using the fact that $p$ is unramified in $F$). Since we also have $[\beta_\nu] \in \mathcal{O}_{F_\nu}^\times (F_\nu^\times)^n/(F_\nu^\times)^n$, the integer $k_i$ is necessarily a multiple of $n$. This concludes the proof.
\end{proof}

Now let $\Gamma({\widetilde{\mathbf H}})^m$ denote $\widetilde{\mathbf H}(\QQ)\cap K({\widetilde{\mathbf H}})^m$. Then
\begin{align*}
\widetilde{\mu}( \Gamma({\widetilde{\mathbf H}})\backslash \widetilde{\mathbf H}(\RR))=[\Gamma({\widetilde{\mathbf H}})^m:\Gamma({\widetilde{\mathbf H}})]\cdot\widetilde{\mu}( \Gamma({\widetilde{\mathbf H}})^m\backslash \widetilde{\mathbf H}(\RR))
\end{align*}
and, as explained in the proof of \cite{daw:degrees}, Lemma 6.2 (or simply by applying \cite{milne:intro}, Theorem 4.16),
\begin{align*}
[\Gamma({\widetilde{\mathbf H}})^m:\Gamma({\widetilde{\mathbf H}})]=[ K({\widetilde{\mathbf H}})^m:K({\widetilde{\mathbf H}})].
\end{align*}

From the formula of \cite{prasad:volumes}, it is possible to give a lower bound for the volume of $\Gamma({\widetilde{\mathbf H}})^m\backslash \widetilde{\mathbf H}(\RR)$. Indeed, following the proof of \cite{daw:degrees}, Lemma 6.4, from the sentence, ``Therefore, by [16, Theorem 3.7],...'' on page 10, it is straightforward to verify that
\begin{align*}
\widetilde{\mu}( \Gamma({\widetilde{\mathbf H}})^m\backslash \widetilde{\mathbf H}(\RR))\geq \prod_{i=1}^s\left(\prod_{j=1}^r\frac{m_{i,j}!}{(2\pi)^{m_{i,j}+1}}\right)^{[K_i:\QQ]}\cdot\Pi(\widetilde{\mathbf H},K({\widetilde{\mathbf H}})^m)^{\frac{1}{2}},
\end{align*}
where the $\mathbf{H}'_i$ are as in Section \ref{measures} and the $m_{i,j}$ are the exponents of the simple, simply connected, compact, real analytic Lie group of the same type as the quasi-split inner form of $\mathbf H'_i$. Moreover, by \cite{prasad:volumes}, Section 1.5, we see that
\begin{align*}
\prod_{i=1}^s\left(\prod_{j=1}^r\frac{m_{i,j}!}{(2\pi)^{m_{i,j}+1}}\right)^{[K_i:\QQ]}\geq (2\pi)^{-\dim \mathbf G}.
\end{align*}

Therefore, combining the above, we conclude that $\mu(\Gamma(\mathbf H)\backslash \mathbf H^\der(\RR)^+)$ is bounded from below by
\begin{align}\label{eqn-mid}
(2\pi)^{-\dim \mathbf G}B^{-1}c_2^{-1}C^{-|\Sigma|}\cdot [ K({\widetilde{\mathbf H}})^m:K({\widetilde{\mathbf H}})]\cdot\Pi(\widetilde{\mathbf H},K({\widetilde{\mathbf H}})^m)^{\frac{1}{2}},
\end{align}
where we abbreviate $\Sigma:=\Sigma(\mathbf H^\der,K({\mathbf H^\der}))$. 

We require a replacement for \cite{daw:degrees}, Lemma 6.5.

\begin{lem}\label{char}
Let $\mathbf T\subset \mathbf H^\der_{\QQ_p}$ be a maximal torus. There exists a basis of $X^*(\mathbf T)$ such that the coordinates of the characters of $\mathbf T$ intervening in $\rho$ are bounded in absolute value by a uniform, effectively computable constant $D$.
\end{lem}

\begin{proof}
Consider the root system $\Phi$ associated with $\mathbf H^\der$ and $\mathbf T$ (over $\overline{\QQ}_p$). 
Note that the rank $r$ of $\Phi$ is bounded by the rank of $\mathbf G$. Let $\Delta:=\{\alpha_1,\ldots,\alpha_r\}$ denote a set of simple roots in $\Phi$. The lattice of $X^*(\mathbf T)$ generated by the simple roots is precisely the image of $X^*(\mathbf T^\ad)$, where $\mathbf T^\ad$ is the image of $\mathbf T$ under the finite central map $\mathbf H^\der\rightarrow \mathbf H^\ad$. Therefore, the index of $X^*(\mathbf T^\ad)$ in $X^*(\mathbf T)$ is bounded by the size of the kernel $\mathbf{Z(H^\der)}$, which, by the proof of Lemma \ref{modZ}, is bounded by the $B$ appearing in the statement of Lemma \ref{modZ}.

We can choose a $\ZZ$-basis $\{e_1,\ldots,e_r\}$ of $X^*(\mathbf T)$ and write
\begin{align*}
\alpha_i=\sum_{j=1}^r m_{ji}e_j
\end{align*}
yielding an integral $r\times r$-matrix $M:=(m_{ji})_{ji}$. We multiply $M$ on the left by a unimodular matrix $N$ such that $NM$ is in Hermite normal form. That is, $NM$ is upper triangular and every entry is a non-negative integer bounded by the largest of the diagonal entries. Therefore, since
\begin{align*}
\det(NM)=\det(M)\leq B,
\end{align*}
the latter is a bound on the entries of $NM$. The matrix $NM$ expresses the elements of $\{\alpha_1,\ldots,\alpha_r\}$ in terms of the basis $\{N^{-1}e_1,\ldots,N^{-1}e_r\}$. Therefore, it suffices to bound the absolute value of the coordinates of the characters with respect to the $\QQ$-basis $\Delta$ of $X^*(\mathbf T)_\QQ:=X^*(\mathbf T)\otimes_\ZZ\QQ$.

Since $\mathbf H^\der$ is semisimple, $\rho$ decomposes into a direct sum of irreducible representations of $H^\der_{\overline{\QQ}_p}$. Therefore, we can and do assume that $\rho$ is irreducible.

There exists a character $\chi$ (the heighest weight) of $\mathbf T$ intervening in the restriction of $\rho$ to $\mathbf T$ such that, if we express $\chi$ as a rational expression of simple roots, then the maximum of the absolute values of the coordinates is a bound for the absolute values of the coefficients of the other characters of $\mathbf T$ intervening. That is, it suffices to restrict our attention to $\chi$.

The weight $\chi$ is a dominant weight and, as such, is a non-negative integer linear combination of the fundamental weights $\{w_1,\ldots,w_r\}$. The fundamental weights have the property that
\begin{align}\label{fdwts}
\langle w_i,\alpha_k\rangle= d_i\delta_{ij},
\end{align}
where $\langle\cdot,\cdot\rangle$ is a Euclidean scalar product on $X^*(\mathbf T)_\QQ$ invariant under the Weyl group, and $d_i$ is positive. As linear combinations of the $\alpha_i$, the $w_i$ have positive rational coefficients (given by the inverse of the Cartan matrix). Therefore, since $r$ is constrained by the rank of $G$, it remains to bound the coefficients of $\chi$ as a non-negative integral linear combination of the fundamental weights. This follows from (\ref{fdwts}) and the Weyl dimension formula, which states that
\begin{align*}
n=\dim\rho=\prod_{\alpha\in\Phi^+}\frac{\langle \chi+\phi,\alpha\rangle}{\langle\phi,\alpha\rangle},
\end{align*}
where $\Phi^+$ denotes the set of positive roots in $\Phi$ and $\phi$ is half the sum of the positive roots.
\end{proof}

We require the following version of \cite{daw:degrees}, Lemma 6.6.

\begin{lem}\label{new66}
There exist uniform, effectively computable constants $c_3$ and $c_4$ such that, for any $p\notin\Sigma(\widetilde{\mathbf H},K(\widetilde{\mathbf H})^m)$ greater than $c_3$ such that $K(\widetilde{\mathbf H})_p\subsetneq K(\widetilde{\mathbf H})^m_p$, we have
\begin{align*}
[K(\widetilde{\mathbf H})^m_p:K(\widetilde{\mathbf H})_p]>c_4p.
\end{align*}
\end{lem}

\begin{proof}
The proof proceeds as in \cite{daw:degrees}, Lemma 6.6, after replacing $\mathbf H$ with $\mathbf H^\der$ (and adjusting to our notations).

First, note that we have to restrict to primes $p$ such that ${\mathcal Z}_{\FF_p}$ is smooth (where $\mathcal Z$ is the kernel of the unique extension over $\ZZ_p$ of the natural map $\widetilde{\bf H}_{\QQ_p}\rightarrow \mathbf H^\der_{\QQ_p}$) and $K_p={\mathbf G}_{\ZZ_p}(\ZZ_p)$. Taking $p>\max\{|\mathbf Z|,p_0\}$, where $p_0$ is the largest prime such that $K_p\neq {\mathbf G}_{\ZZ_p}(\ZZ_p)$, suffices for this purpose. Then, for such a prime $p$, such that $p\notin\Sigma(\widetilde{\mathbf H},K(\widetilde{\mathbf H})^m)$ and such that $K(\widetilde{\mathbf H})_p\subsetneq K(\widetilde{\mathbf H})^m_p$, it is necessary to bound $|H^1(\Gal(\QQ^{\rm un}_p/\QQ_p),{\mathcal Z}(\ZZ^{\rm un}_p))|$. 

From the exact sequence
\begin{align*}
0\rightarrow H^1(\Gal(F_v/\QQ_p),\mathbf Z(F_v))\rightarrow H^1(\Gal(\QQ^{\rm un}_p/\QQ_p),{\mathcal Z}(\ZZ^{\rm un}_p))\rightarrow\\ \rightarrow H^1(\Gal(\QQ^{\rm un}_p/F_v),{\mathcal Z}(\ZZ^{\rm un}_p)),
\end{align*}
where $F$ is the field afforded to us by Lemma \ref{degF} and $v$ is a place of $F$ lying above $p$, 
we obtain the bound $|\mathbf Z|^{[F:\QQ]}|H^1(\Gal(\QQ^{\rm un}_p/F_v),{\mathcal Z}(\ZZ^{\rm un}_p))|$. 

For $n\in\NN$, the group $H^1(\Gal(\QQ^{\rm un}_p/F_v),\mu_n(\ZZ^{\rm un}_p))$ is isomorphic to $\mathcal{O}^\times_{F_v}/(\mathcal{O}_{F_v}^\times)^n$. Furthermore, by \cite{neukirch}, Proposition 5.7 (i),
\begin{align*}
\mathcal{O}^\times_{F_v}=\mu_{q-1}\oplus(1+\mathfrak{p}_\nu)=\ZZ/(q-1)\ZZ\oplus\ZZ/p^a\ZZ\oplus\ZZ^d_p,
\end{align*}	
where $d:=[F_v:\QQ_p]$, $\mathfrak{p}_\nu$ is the maximal ideal of $\mathcal{O}_{F_v}$, $q=|\mathcal{O}_{F_v}/\mathfrak{p}_\nu|$, and $a\geq 0$. It follows that 
\begin{align*}
|H^1(\Gal(\QQ^{\rm un}_p/F_v),\mu_n(\ZZ^{\rm un}_p))|\leq n^{[F:\QQ]+2}
\end{align*}
and so
\begin{align*}
|H^1(\Gal(\QQ^{\rm un}_p/\QQ_p),{\mathcal Z}(\ZZ^{\rm un}_p))|&\leq |\mathbf Z|^{[F:\QQ]}|H^1(\Gal(\QQ^{\rm un}_p/F_v),{\mathcal Z}(\ZZ^{\rm un}_p))|\\
&=|\mathbf Z|^{[F:\QQ]}|\prod_iH^1(\Gal(\QQ^{\rm un}_p/F_v),\mu_{n_i}(\ZZ^{\rm un}_p))|\\
&\leq |\mathbf Z|^{[F:\QQ]}|\prod_in_i^{[F:\QQ]+2}=|\mathbf Z|^{2[F:\QQ]+2}\leq B^{2b+2},
\end{align*}
where $B$ and $b$ are the uniform constants afforded to us from Lemmas \ref{modZ} and \ref{degF}, respectively.

Let $\mathbf T_0$ denote the maximal torus of $\mathbf H^\der_{\QQ_p}$ occuring in the proof of \cite{daw:degrees}, Lemma 6.6, and let $r$ denote its dimension. As explained in the proof of \cite{daw:degrees}, Lemma 6.6, $\mathbf T_{0,\ZZ_p}$ is a torus and we have a canonical isomorphism 
\begin{align*}
X^*(\mathbf T_{0,\overline{\QQ}_p})\rightarrow X^*(\mathbf T_{0,\overline{\FF}_p})
\end{align*}
identifying the characters intervening in $\rho_{\overline{\QQ}_p}$ and $\rho_{\overline{\FF}_p}	$. Therefore, if we let $D$ denote the constant afforded to us by Lemma \ref{char}, we obtain a basis of $X^*(\mathbf T_{0,\overline{\FF}_p})\cong\ZZ^r$ such that the characters intervening in $\rho_{\overline{\FF}_p}$ have coordinates of absolute value at most $D$. 

Following the proof of \cite{EY:subvar}, Proposition 4.3.9, it remains to consider subgroups of $\mathbf T_{0,\overline{\FF}_p}$ of the form $\mathbf T_X:=\cap_{\chi\in X}\ker\chi$, varying over subsets $X$ of $X^*(\mathbf T_{0,\overline{\FF}_p})$ whose members have coordinates of absolute value at most $2D$, and to give a bound on the size of their group of connected components $\pi_0(\mathbf T_X)$. Let $X$ denote such a subset and let $M_X$ denote the matrix whose rows express the coordinates of the elements of $X$ with respect to the basis above. Putting $M_X$ into Smith Normal Form, we obtain an isomorphism
\begin{align*}
X^*(\mathbf T_X)=\ZZ^{r_X}\oplus\ZZ/d_1\ZZ\oplus\cdots\oplus\ZZ/d_X\ZZ,
\end{align*}
where $d_1|d_2|\cdots |d_X$ are the non-zero diagnonal entries of the Smith Normal Form. Since $\pi_0(\mathbf T_X)$ is of size $d_1d_2\cdots d_X$, and the absolute values of the coordinates of the members of $X$ are uniformly bounded, we obtain a uniform and effectively computable bound $c$ on $|\pi_0(\mathbf T_X)|$. It now follows from our previous calculations and \cite{EY:subvar}, Section 4.4, that
\begin{align*}
[K(\widetilde{\mathbf H})^m_p:K(\widetilde{\mathbf H})_p]>(cB^{2b+2})^{-1}(p-1).
\end{align*}
\end{proof}

We require the following version of \cite{daw:degrees}, Lemma 6.7.

\begin{lem}\label{new67}
There exists a uniform, effectively computable constant $c_5$ such that, if $p\notin\Sigma(\widetilde{\mathbf H},K(\widetilde{\mathbf H}))$ is a prime greater than $c_5$, then $p\notin\Sigma(\mathbf H,K(\mathbf H))$.
\end{lem}

\begin{proof}
Again, the proof proceeds as in \cite{daw:degrees}, Lemma 6.7, after replacing $\mathbf H$ with $\mathbf H^\der$ (and adjusting to our notations), and we conclude that we can take 
\begin{align*}
c_5=\max\{c_3,c^{-1}_4B^{2b+2}\},
\end{align*}
where $B$ and $b$ are the uniform constants afforded to us from Lemmas \ref{modZ} and \ref{degF}, respectively, and $c_3$ and $c_4$ are the uniform constants afforded to us by Lemma \ref{new66}.
\end{proof}

The proof of Theorem \ref{volbound} now concludes as in \cite{daw:degrees}, combining the lower bound (\ref{eqn-mid}) with Lemmas \ref{new66} and \ref{new67}. 
\end{proof}

\begin{proof}[Proof of Theorem \ref{mainbound}]
This follows by combining Theorems \ref{degbound} and \ref{volbound}.
\end{proof}

\section{Hecke correspondences}

In this section, we generalise \cite{daw:degrees}, Theorem 7.1. 

If $K$ is a compact open subgroup of $\mathbf G(\AAA_f)$ equal to a product of compact open subgroups $K_p\subset \mathbf G(\QQ)_p$, we will use the notation $K^p$ to denote the product $\prod_{l\neq p}K_l$.

\begin{teo}\label{hecke}
Let $(\mathbf G,X)$ be a Shimura datum such that $\mathbf G=\mathbf G^{\ad}$ and let $X^+$ be a connected component of $X$. Fix a faithful representation $\rho:\mathbf G\rightarrow\GL_n$ and let $K$ be a neat compact open subgroup of $\mathbf G(\AAA_f)$ equal to a product of compact open subgroups $K_p\subset \mathbf G(\QQ_p)$.
 
There exist effectively computable positive integers $k$ and $f$ such that, if 
\begin{itemize}
\item $(\mathbf P,X_\mathbf P)$ is a Shimura subdatum of $(\mathbf G,X)$ and $X^+_\mathbf P$ is a connected component of $X_\mathbf P$ contained in $X^+$,
\item $V$ is a positive-dimensional special subvariety of $S_{K(\mathbf P)}(\mathbf P,X_\mathbf P^+)$ defined by $(\mathbf H,X_\mathbf H)$ and $X^+_{\mathbf H}$, and
\item $p\notin\Sigma_V$ is a prime such that $K_p=\mathbf G_{\ZZ_p}(\ZZ_p)$, 
\end{itemize}
then there exists a compact open subgroup $I(\mathbf P)_p$ of $\mathbf P(\QQ_p)$ contained in $K(\mathbf P)_p:=K_p\cap \mathbf P(\QQ_p)$ and an element $\alpha\in \mathbf P(\QQ_p)$ such that
\begin{itemize}
\item[(1)] $[K(\mathbf P)_p:I(\mathbf P)_p]\leq p^f$,

\medskip

\item[(2)] $[I(\mathbf P)_p:I(\mathbf P)_p\cap\alpha I(\mathbf P)_p\alpha^{-1}]\leq p^k$,

\medskip

\item[(3)] if $I(\mathbf P):=K(\mathbf P)^pI(\mathbf P)_p\subset \mathbf P(\AAA_f)$,
\begin{align*}
\pi_{I(\mathbf P),K(\mathbf P)}:S_{I(\mathbf P)}(\mathbf P,X_\mathbf P^+)\rightarrow S_{K(\mathbf P)}(\mathbf P,X_\mathbf P^+)
\end{align*}

denotes the natural morphism, and $\widetilde{V}$ is an irreducible component of $\pi_{I(\mathbf P),K(\mathbf P)}^{-1}(V)$, then $\widetilde{V}\subseteq T_{\alpha}(\widetilde{V})$, and

\medskip

\item[(4)] if $\mathbf P^\ad=\mathbf P_1\times\cdots\times \mathbf P_n$ is the decomposition of $\mathbf P^\ad$ into the product of its $\QQ$-simple factors, and $I\subseteq\{1,\ldots,n\}$ is the set of those $i$ for which the natural projection $\pi_i:\mathbf P\rightarrow \mathbf \mathbf \mathbf P_i$ restricts non-trivially to $\mathbf H^\der$, then $\pi_i(k_1\alpha k_2)$ generates an unbounded subgroup of $\mathbf P_i(\QQ_p)$ for every $i\in I$ and for all $k_1,k_2\in I(\mathbf P)_p$.
\end{itemize}
\end{teo}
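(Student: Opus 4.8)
The plan is to follow the proof of \cite{daw:degrees}, Theorem 7.1, carrying the ambient group $\mathbf P$ along. First I would record what the hypotheses give: since $p\notin\Sigma_V=\Sigma(\mathbf H^\der,K(\mathbf H^\der))$, the group $\mathbf H^\der_{\QQ_p}$ is quasi-split and splits over an unramified extension, and $K(\mathbf H^\der)_p$ is hyperspecial; and since $\mathbf G=\mathbf G^\ad$ and $K_p=\mathbf G_{\ZZ_p}(\ZZ_p)$, intersecting with $\mathbf P(\QQ_p)$ gives $K(\mathbf P)_p=\mathbf P_{\ZZ_p}(\ZZ_p)$. As $V$ is positive-dimensional, $\mathbf H^\der$ is non-trivial, so $\mathbf H^\der_{\QQ_p}$, being quasi-split and non-trivial, is $\QQ_p$-isotropic; I fix a maximal $\QQ_p$-split torus $\mathbf A\subseteq\mathbf H^\der_{\QQ_p}$, of positive dimension $\le r_\mathbf G$, and a maximal torus of $\mathbf H^\der_{\QQ_p}$ containing it.

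Next I would construct $\alpha$ and $I(\mathbf P)_p$. By Lemma \ref{char} (applied to that maximal torus) the $\mathbf A$-weights occurring in $\rho$, hence the $\mathbf A$-weights on $\Lie\mathbf P\subseteq\mathfrak{gl}_n$, have coordinates bounded by a uniform constant in a suitable basis of $X_*(\mathbf A)$. For each $i$ in the index set $I$, the image $\pi_i(\mathbf H^\der)$ is a non-trivial quotient of $\mathbf H^\der_{\QQ_p}$, so it too is quasi-split and non-trivial, hence $\QQ_p$-isotropic, and the image of $\mathbf A$ in $\mathbf P_{i,\QQ_p}$ is a split torus of positive dimension; therefore the kernels of the maps $X_*(\mathbf A)\to X_*(\pi_i(\mathbf A))$, $i\in I$, are proper subgroups, and I may choose $\lambda\in X_*(\mathbf A)$ outside all of them with coordinates bounded in terms of $r_\mathbf G$ only, and (after multiplying by a uniformly bounded integer, using that the kernel of the simply connected cover $\widetilde{\mathbf H}\to\mathbf H^\der$ has uniformly bounded order as in the proof of Theorem \ref{volbound}) lifting to $\widetilde{\mathbf H}$. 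Put $\alpha:=\lambda(p)\in\mathbf A(\QQ_p)\subseteq\mathbf H^\der(\QQ_p)\subseteq\mathbf P(\QQ_p)$. For $I(\mathbf P)_p$ I would take the principal congruence subgroup $\ker\bigl(\mathbf P_{\ZZ_p}(\ZZ_p)\to\mathbf P_{\ZZ_p}(\ZZ/p^{N_0})\bigr)$ for a uniform $N_0$ large enough that the $p$-adic logarithm identifies it with a $\ZZ_p$-lattice $\mathfrak l_0$ in $\Lie\mathbf P(\QQ_p)$ and that $N_0$ exceeds twice the bound on $\lvert\langle\chi,\lambda\rangle\rvert$ over the relevant weights $\chi$; this subgroup is normal in $K(\mathbf P)_p=\mathbf P_{\ZZ_p}(\ZZ_p)$.

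Conditions (1), (2) and (4) I would then verify by direct computation. For (1), $[K(\mathbf P)_p:I(\mathbf P)_p]\le\lvert\GL_n(\ZZ/p^{N_0})\rvert\le p^{N_0n^2}$, so $f:=N_0n^2$ works. For (2), passing to Lie algebras, $I(\mathbf P)_p\cap\alpha I(\mathbf P)_p\alpha^{-1}$ corresponds to $\mathfrak l_0$ intersected with its conjugate under $\lambda(p)$, whose index is $p^e$ with $e=\sum_\chi\max(0,\langle\chi,\lambda\rangle)\dim(\Lie\mathbf P)_\chi$, a sum over the (at most $\dim\mathbf G$) $\mathbf A$-weights on $\Lie\mathbf P$; by the boundedness of the weights and of $\lambda$ this is bounded by a uniform $k$. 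For (4), since $\pi_i\circ\lambda$ is a non-trivial cocharacter of $\mathbf P_i$ for $i\in I$, the element $\pi_i(\alpha)$ is unbounded in $\mathbf P_i(\QQ_p)$; writing its image under a faithful representation as $\mathrm{diag}(p^{a_1},\dots)$ in a suitable basis with the $a_j$ not all zero, and the images of $k_1,k_2$ in the corresponding principal congruence subgroup, the bound $N_0>2\max_j\lvert a_j\rvert$ makes a Newton-polygon estimate show that the characteristic polynomial of $\pi_i(k_1\alpha k_2)$ has the same Newton polygon as $\prod_j(T-p^{a_j})$, so $\pi_i(k_1\alpha k_2)$ has an eigenvalue of non-zero valuation and hence generates an unbounded subgroup.

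The remaining condition (3) is, I expect, the main obstacle. One first checks that every irreducible component $\widetilde V$ of $\pi_{I(\mathbf P),K(\mathbf P)}^{-1}(V)$ is again special, is the image of a full translate $\gamma X_\mathbf H^+\times\{1\}$ with $\gamma\in\Gamma\subseteq K(\mathbf P)$, and has generic Mumford--Tate group $\gamma\mathbf H\gamma^{-1}$ (the Mumford--Tate group being unchanged along the covering). Unwinding the definition of $T_\alpha$, the inclusion $\widetilde V\subseteq T_\alpha(\widetilde V)$ holds as soon as the image of $\gamma X_\mathbf H^+\times\{\beta\}$ equals that of $\gamma X_\mathbf H^+\times\{1\}$ for some $\beta\in I(\mathbf P)\alpha I(\mathbf P)$, which---using that $I(\mathbf P)_p$ is normal in $K(\mathbf P)_p$---reduces to finding $q\in\mathbf P(\QQ)$ normalizing $\gamma\mathbf H\gamma^{-1}$, lying in the identity component at infinity, integral at every prime $\ell\ne p$, and with $p$-component in $I(\mathbf P)_p(\gamma_p^{-1}\alpha^{-1}\gamma_p)I(\mathbf P)_p$. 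The plan is to produce such a $q$ by strong approximation for $\widetilde{\mathbf H}$---legitimate because $\widetilde{\mathbf H}(\RR)$ has no compact $\QQ$-simple factors, by the Shimura datum axioms---once one has checked that the $p$-adic double coset in question meets the image of $\widetilde{\mathbf H}(\QQ_p)$; arranging this for every component $\widetilde V$ simultaneously, with the single element $\alpha$ fixed above, is the delicate point and is where the precise choices of $\alpha$ and of the normal congruence level $I(\mathbf P)_p$ are used, exactly as in \cite{daw:degrees}. Granting (3), conditions (1)--(4) together give the theorem, and the constants $f$ and $k$ produced above depend only on $n$, $\dim\mathbf G$ and the constant of Lemma \ref{char}, hence only on $(\mathbf G,X)$ and $\rho$.
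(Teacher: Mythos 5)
Your overall strategy mirrors the paper's: pick a cocharacter $\lambda$ of a split torus in $\mathbf H^{\der}_{\QQ_p}$ whose image in each $\mathbf P_i$ ($i\in I$) is non-trivial, set $\alpha=\lambda(p)$ (raised to a uniformly bounded power so that it lifts to $\widetilde{\mathbf H}(\AAA_f)$, as in Lemma~\ref{inc}), shrink the level at $p$, and verify (1)--(4). Your direct computations for (1), (2) and (4) via the $p$-adic logarithm and Newton polygons are plausible, though you implicitly rely on $\mathbf A$ extending to a torus over $\ZZ_p$ (Lemma~\ref{tor}) to make the Lie-algebra weight decomposition integral, and you invoke Lemma~\ref{char} (for a maximal torus) where what is actually needed is a bound for characters of the split subtorus in a compatible basis --- this passage is not automatic and is exactly what the paper's Lemma~\ref{split-char} provides.

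The genuine gap is condition (3), which you acknowledge but do not prove, and which cannot simply be deferred to ``exactly as in \cite{daw:degrees}'' once you have changed the construction. The paper, following \cite{daw:degrees} and \cite{KY:AO} Lemma~8.1.6, takes $I(\mathbf P)_p$ of Iwahori type; the Iwahori factorization is what makes the double coset $I(\mathbf P)_p\,\alpha\,I(\mathbf P)_p$ and its interaction with the components of $\pi_{I(\mathbf P),K(\mathbf P)}^{-1}(V)$ computable, and it is what the strong-approximation step in the analogue of Lemma~\ref{inc} is calibrated against. You instead take $I(\mathbf P)_p$ to be a principal congruence subgroup, a structurally different choice (normal, index a genuine power of $p$), and then assert that ``arranging this for every component $\widetilde V$ simultaneously\ldots is the delicate point\ldots exactly as in \cite{daw:degrees}.'' But that argument is tied to the Iwahori-type level; with a principal congruence subgroup you would need to re-do, from scratch, the verification that the $p$-adic double coset meets the image of $\widetilde{\mathbf H}(\QQ_p)$ in the required way and that the resulting $q\in\mathbf P(\QQ)$ returns $\widetilde V$ to itself rather than to another component. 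As written, (3) is not established, and the proposal therefore does not yet constitute a proof.
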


\begin{rem}
Note that, if $V$ is non-facteur, then, by Lemma \ref{nonfacnontriv}, $I=\{1,\ldots,n\}$. 
\end{rem}

We will use the term uniform to refer to constants depending only on the data in the first paragraph of the statement of Theorem \ref{hecke}. We will deal first with the matter of including a positive-dimensional special subvariety in its image under a Hecke correspondence.

\begin{lem}\label{inc}
There exists a uniform, effectively computable positive integer $A$ such that, for any $\alpha\in \mathbf H^\der(\AAA_f)$,
\begin{align}V\subseteq T_{\alpha^A}(V).\nonumber\end{align}
\end{lem}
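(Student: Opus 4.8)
The plan is to reduce the claim $V\subseteq T_{\alpha^A}(V)$ to a statement purely about the arithmetic group $\Gamma(\mathbf{H})$ and its behaviour under the Hecke correspondence attached to $\alpha^A\in\mathbf H^\der(\AAA_f)$. Recall that $V$ is the image of $\Gamma(\mathbf H)\backslash X^+_{\mathbf H}$ in $S_K(\mathbf G,X^+)$, where $\Gamma(\mathbf H)=\Gamma\cap\mathbf H(\QQ)_+$ is (by Section~\ref{derived}) contained in $\mathbf H^\der(\QQ)$. For a Hecke correspondence $T_{\beta}$ with $\beta\in\mathbf H^\der(\AAA_f)$, the standard description (see, e.g., \cite{EY:subvar}, or the way $T_\alpha$ acts on special subvarieties) gives that $V\subseteq T_\beta(V)$ precisely when, working inside $\mathbf H^\der$, the finite-index subgroup $\Gamma(\mathbf H)\cap \beta\Gamma(\mathbf H)\beta^{-1}$ has image under $X^+_{\mathbf H}$ mapping onto $V$ via both projections; equivalently it suffices that $\beta$ normalizes $\Gamma(\mathbf H)$ up to finite index in a compatible way, and the cleanest sufficient condition is that $\beta\in\mathbf H^\der(\QQ)\cdot\big(K(\mathbf H^\der)\cap\beta K(\mathbf H^\der)\beta^{-1}\big)$ so that the correspondence sends the component $\Gamma(\mathbf H)\backslash X^+_{\mathbf H}$ to itself. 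So the first step is to make this reduction explicit: $V\subseteq T_{\alpha^A}(V)$ follows if $\alpha^A$ lies in the group $\mathbf H^\der(\QQ)\cdot K(\mathbf H^\der)$ (or, more precisely, stabilizes the relevant component).

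The second step is to bound the order of $\alpha$ modulo $K(\mathbf H^\der)$ in a uniform way. Since $K(\mathbf H^\der)=\mathbf H^\der(\AAA_f)\cap K$ is a compact open subgroup, for each prime $p$ the local component $\alpha_p$ lies in $\mathbf H^\der(\QQ_p)$, and $K(\mathbf H^\der)_p=\mathbf G_{\ZZ_p}(\ZZ_p)\cap\mathbf H^\der(\QQ_p)$ for almost all $p$ (all $p$ outside a uniform finite set). The point is that the image of $\mathbf H^\der(\AAA_f)/K(\mathbf H^\der)$ that is relevant here — the cokernel of $\mathbf H^\der(\QQ)$ acting, i.e.\ essentially a class-group-type quantity — is controlled: by strong approximation for the simply connected cover $\widetilde{\mathbf H}$, together with the finiteness of $\Sigma(\mathbf H^\der,K(\mathbf H^\der))$, the relevant finite quotient has order dividing a quantity expressible through $|\mathbf Z|$ (Lemma~\ref{modZ}), the class number contributions, and $|\pi_0|$-type factors, all of which were already bounded uniformly in the proof of Theorem~\ref{volbound}. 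Thus there is a uniform, effectively computable $A$ such that $\alpha^A\in\mathbf H^\der(\QQ)\cdot K(\mathbf H^\der)$ for every $\alpha\in\mathbf H^\der(\AAA_f)$; one may simply take $A$ to be the exponent (or the order) of this finite group, bounded by the constants $B$, $b$ from Lemmas~\ref{modZ} and~\ref{degF}.

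The third step is to check that membership $\alpha^A\in\mathbf H^\der(\QQ)\cdot K(\mathbf H^\der)$ genuinely yields $V\subseteq T_{\alpha^A}(V)$, i.e.\ that no extra component-matching obstruction arises. Writing $\alpha^A=q\cdot\kappa$ with $q\in\mathbf H^\der(\QQ)$ and $\kappa\in K(\mathbf H^\der)$, the Hecke translate $[\alpha^A]$ on $\Sh_{K(\mathbf H^\der)}(\mathbf H^\der,X_{\mathbf H^\der})$ agrees (on the relevant component) with the action of $q\in\mathbf H^\der(\QQ)$, which permutes the components and fixes the special subvariety $\Gamma(\mathbf H)\backslash X^+_{\mathbf H}$ if $q\in\mathbf H^\der(\QQ)_+$; if $q$ is not in $\mathbf H^\der(\QQ)_+$ one absorbs the $\pi_0$-discrepancy by enlarging $A$ by the bounded factor $|\pi_0(\mathbf H^\der(\RR))|\le|\mathbf Z|$, already controlled above. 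Pushing forward along $\iota_{\mathbf H}$ to $S_K(\mathbf G,X^+)$ and using that $T_{\alpha^A}$ on $S_K(\mathbf G,X^+)$ restricts to (a multiple of) $T_{\alpha^A}$ on the Shimura subvariety, one gets $V\subseteq T_{\alpha^A}(V)$.

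The main obstacle I anticipate is the bookkeeping in the second step: producing a genuinely \emph{uniform} and \emph{effective} bound $A$ on the order of $\alpha$ modulo $K(\mathbf H^\der)$, uniformly over all Shimura subdata $(\mathbf H,X_{\mathbf H})$ of $(\mathbf G,X)$. This requires assembling the local obstructions prime-by-prime — combining strong approximation for $\widetilde{\mathbf H}$, the bound on the central kernel $|\mathbf Z|\le B$, the bounded field extension $F/\QQ$ from Lemma~\ref{degF}, and the fact that $K(\mathbf H^\der)_p$ is hyperspecial for all $p$ outside the uniformly finite set $\Sigma(\mathbf G,K)$ — in exactly the style already carried out in the proof of Theorem~\ref{volbound} (Lemmas~\ref{C-to-power}, \ref{new66}, \ref{new67}). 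Once those inputs are organized, the argument is essentially a Galois-cohomology / class-group finiteness computation, and the constant $A$ can be written down explicitly in terms of $B$ and $b$.
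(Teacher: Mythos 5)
Your overall strategy — strong approximation for the simply connected cover $\widetilde{\mathbf H}$ together with the uniform bound $|\mathbf Z|\le B$ from Lemma~\ref{modZ} — is indeed the one the paper uses, but the way you organize it leaves a genuine gap, and your step~2 is stated imprecisely in a way that points at the same difficulty.

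The paper's proof avoids both problems by \emph{lifting} $\alpha^A$ to $\widetilde{\mathbf H}(\AAA_f)$ instead of trying to decompose $\alpha^A$ inside $\mathbf H^\der(\AAA_f)$. Since the central isogeny $\pi\colon\widetilde{\mathbf H}\to\mathbf H^\der$ has kernel $\mathbf Z$, the cokernel of $\widetilde{\mathbf H}(\AAA_f)\to\mathbf H^\der(\AAA_f)$ injects into $H^1(\AAA_f,\mathbf Z)$ and is therefore annihilated by $|\mathbf Z|$, so $\alpha^A$ lifts to some $\beta\in\widetilde{\mathbf H}(\AAA_f)$ for any $A$ divisible by $|\mathbf Z|$ (the paper takes $A=B!$). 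Strong approximation for the simply connected $\widetilde{\mathbf H}$ then gives $\beta=qk$ with $q\in\widetilde{\mathbf H}(\QQ)$ and $k\in\pi^{-1}(K(\mathbf H^\der))$, and — crucially — the \emph{connectedness} of $\widetilde{\mathbf H}(\RR)$ forces $\pi(q)\in\mathbf H^\der(\RR)^+$ automatically. One then checks directly that the point $\overline{(\pi(q)x,\pi(\beta))}\in T_{\alpha^A}(V)$ equals $\overline{(x,1)}$.

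Two concrete issues with your version. First, in step~2 you speak of ``the cokernel of $\mathbf H^\der(\QQ)$ acting, essentially a class-group-type quantity,'' but $\mathbf H^\der(\QQ)\backslash\mathbf H^\der(\AAA_f)/K(\mathbf H^\der)$ is only a double-coset \emph{set}, not a group, because $\mathbf H^\der$ is not simply connected; to make ``exponent of a finite group'' meaningful you have to pass to $\widetilde{\mathbf H}$ anyway, and once you do, the only quantity entering is $|\mathbf Z|$ — the constant $b$ from Lemma~\ref{degF} plays no role here. Second, and more seriously, step~3 handles the $\pi_0$-discrepancy by ``enlarging $A$ by the bounded factor $|\pi_0(\mathbf H^\der(\RR))|$.'' But if $\alpha^A=q\kappa$, then $\alpha^{nA}=(q\kappa)^n\ne q^n\kappa^n$ in general, so raising the exponent gives no control on whether the rational factor in a new decomposition of $\alpha^{nA}$ lies in $\mathbf H^\der(\QQ)_+$. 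This is a real gap. The paper sidesteps it entirely: because $\widetilde{\mathbf H}(\RR)$ is connected, the rational element $q$ produced by strong approximation has $\pi(q)$ in the correct real component from the start, and no further adjustment of $A$ is needed.
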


\begin{proof}
By definition, $V$ is the image of $X^+_\mathbf H\times\{1\}$ in $\Sh_K(\mathbf G,X)$. Thus, consider a point $\overline{(x,1)}\in V$ with $x\in X^+_\mathbf H$. Let
\begin{align}\pi:\widetilde{\mathbf H}\rightarrow \mathbf H^\der\nonumber\end{align}
be the simply connected covering, whose degree we denote $d$, and consider an $\alpha\in \mathbf H^\der(\AAA_f)$. Therefore, for any positive integer $A$ divisible by $d$, there exists $\beta\in\widetilde{\mathbf H}(\AAA_f)$ such that $\pi(\beta)=\alpha^A$. By strong approximation applied to $\widetilde{\mathbf H}$, we have $\beta=qk$, for some $q\in\widetilde{\mathbf H}(\QQ)$ and some $k\in\pi^{-1}(K(\mathbf H^\der))$. Note that, since $\pi$ is proper, $\pi^{-1}(K(\mathbf H^\der))$ is a compact open subgroup of $\widetilde{\mathbf H}(\AAA_f)$. Since $\widetilde{\mathbf H}(\RR)$ is connected, $\pi(q)\in \mathbf H^\der(\RR)^+$ and $\pi(q)\cdot x\in X^+_\mathbf H$. 

Thus, consider the point
\begin{align}\overline{(\pi(q)\cdot x,\pi(\beta))}\in T_{\alpha^A}(V).\nonumber\end{align} 
By the previous discussion, this is equal to $\overline{(x,1)}$. Therefore, setting $A=B!$, with $B$ the constant afforded to us by Lemma \ref{modZ}, finishes the proof.
\end{proof}

In order to find suitable Hecke correspondences, we will also need the following two results on maximal split tori.

\begin{lem}\label{tor}
Let $p\notin\Sigma(\mathbf H^\der,K(\mathbf H^\der))$ be a prime such that $K_p=\mathbf G_{\ZZ_p}(\ZZ_p)$. Then there exists a maximal split torus $\mathbf S\subset \mathbf H^\der_{\QQ_p}$ such that $\mathbf S_{\ZZ_p}$ is a torus.
\end{lem}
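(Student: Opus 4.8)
The plan is to produce $\mathbf S$ by lifting a maximal split torus from the reduction mod $p$. Since $p\notin\Sigma(\mathbf H^\der,K(\mathbf H^\der))$, the group $\mathbf H^\der_{\QQ_p}$ is quasi-split and splits over an unramified extension of $\QQ_p$, while $K(\mathbf H^\der)_p$ is hyperspecial. Using the hypothesis $K_p=\mathbf G_{\ZZ_p}(\ZZ_p)$ one checks that $\mathbf H^\der_{\ZZ_p}(\ZZ_p)=\mathbf H^\der(\QQ_p)\cap\GL_n(\ZZ_p)=K(\mathbf H^\der)_p$, and hence, as in the analysis of integral models in \cite{daw:degrees}, Section~6 (compare also \cite{EY:subvar}, Section~4.3), the $\ZZ_p$-group scheme $\mathbf H^\der_{\ZZ_p}$ is reductive; in particular its special fibre $\mathbf H^\der_{\FF_p}$ is a connected reductive group over the finite field $\FF_p$.

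I would then pick a maximal split torus $\overline{\mathbf S}$ of $\mathbf H^\der_{\FF_p}$. Because $\ZZ_p$ is a Henselian local ring with finite (hence perfect) residue field, $\overline{\mathbf S}$ lifts to a split $\ZZ_p$-subtorus $\cS$ of $\mathbf H^\der_{\ZZ_p}$ — this is the standard lifting of maximal split tori of reductive group schemes over Henselian local rings (SGA~3, Exp.~XXVI; see also Conrad's notes on reductive group schemes). Put $\mathbf S:=\cS_{\QQ_p}$. This is a $\QQ_p$-split subtorus of $\mathbf H^\der_{\QQ_p}$, and $\cS$, being a $\ZZ_p$-flat closed subscheme of $\GL_{n,\ZZ_p}$ with generic fibre $\mathbf S$, is nothing but the Zariski closure $\mathbf S_{\ZZ_p}$ of $\mathbf S$ in $\GL_{n,\ZZ_p}$; thus $\mathbf S_{\ZZ_p}=\cS$ is a torus. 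Finally, to see that $\mathbf S$ is a \emph{maximal} split torus of $\mathbf H^\der_{\QQ_p}$, I would invoke the fact that for an unramified reductive group over $\QQ_p$ (i.e.\ one that is quasi-split and splits over an unramified extension) the $\QQ_p$-rank equals the $\FF_p$-rank of the reductive special fibre of its hyperspecial model (see Tits' Corvallis article, \S3.8); since $\dim\mathbf S=\dim\overline{\mathbf S}$ is exactly this common value, $\mathbf S$ is maximal.

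The step I expect to be the crux is the reductivity of $\mathbf H^\der_{\ZZ_p}$ recorded in the first paragraph: what must be verified is that, under the two hypotheses on $p$, the \emph{naive} Zariski closure of $\mathbf H^\der_{\QQ_p}$ inside $\GL_{n,\ZZ_p}$ is already the smooth reductive $\ZZ_p$-model attached to the hyperspecial subgroup $K(\mathbf H^\der)_p$, and not merely a flat model dominating it. This is exactly the sort of integral-model bookkeeping carried out in \cite{daw:degrees}, Section~6, which I would cite and, if necessary, reproduce in the present (derived-group) setting; once it is in place, everything else is a routine application of the structure theory of reductive group schemes over $\ZZ_p$.
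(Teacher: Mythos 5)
Your proposal is correct and follows essentially the same route the paper relies on, since the paper simply delegates to \cite{daw:degrees}, Lemma~7.3 — namely, identify the schematic closure $\mathbf H^\der_{\ZZ_p}$ with the reductive Bruhat--Tits model of the hyperspecial subgroup $K(\mathbf H^\der)_p$, lift a maximal split torus from the special fibre, and recognise the lifted torus as the flat closure of its generic fibre. You rightly isolate the one non-routine ingredient (that $\mathbf H^\der_{\ZZ_p}$ is actually the smooth reductive model, not merely a flat $\ZZ_p$-model with the right $\ZZ_p$-points) and defer it to \cite{daw:degrees}, Section~6 and \cite{EY:subvar}, which is precisely what the paper's one-line citation implicitly does.
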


\begin{proof}
The proof is identical to \cite{daw:degrees}, Lemma 7.3, after replacing $\mathbf H$ with $\mathbf H^\der$ (and adjusting to our notations).
\end{proof}

\begin{lem}\label{split-char}
Assume $\mathbf H^\der_{\QQ_p}$ is quasi-split and let $\mathbf S\subset \mathbf H^\der_{\QQ_p}$ be a maximal split torus. There exists a basis of $X^*(\mathbf S)$ such that the coordinates of the characters of $\mathbf S$ that intervene in $\rho$ are bounded in absolute value by a uniform, effectively computable positive constant $D_{\rm sp}$.
\end{lem}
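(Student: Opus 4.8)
The plan is to deduce the bound from Lemma \ref{char} by restricting characters from a maximal torus, re-running the Hermite--normal--form device used there. Since $\mathbf H^\der_{\QQ_p}$ is quasi-split, I would set $\mathbf T:=\mathbf Z_{\mathbf H^\der_{\QQ_p}}(\mathbf S)$, which is a maximal torus of $\mathbf H^\der_{\QQ_p}$ containing $\mathbf S$. As $\mathbf S_{\overline{\QQ}_p}$ is a subtorus of $\mathbf T_{\overline{\QQ}_p}$, the restriction map $\mathrm{res}\colon X^*(\mathbf T)\to X^*(\mathbf S)$ is surjective; and because $\mathbf S\subseteq\mathbf T$, the $\mathbf S$-eigenspaces in $\overline{\QQ}^n_p$ are direct sums of $\mathbf T$-eigenspaces, so the characters of $\mathbf S$ intervening in $\rho$ are exactly the restrictions $\mathrm{res}(\widetilde\chi)$ of the characters $\widetilde\chi$ of $\mathbf T$ that intervene in $\rho$. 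It therefore suffices to control these restrictions.

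Let $\Phi$ be the root system of $\mathbf H^\der_{\overline{\QQ}_p}$ relative to $\mathbf T$, with simple roots $\Delta=\{\alpha_1,\dots,\alpha_r\}$, and note that $r$ is bounded by the rank $r_{\mathbf G}$ of $\mathbf G$. By the proof of Lemma \ref{char} (which produces an effective bound on the coordinates of these characters with respect to $\Delta$), every $\widetilde\chi$ intervening in $\rho$ can be written as $\widetilde\chi=\sum_i c_i\alpha_i$ with $c_i\in\QQ$ and $|c_i|\le D_1$ for a uniform, effectively computable constant $D_1$. Since $\mathbf H^\der_{\QQ_p}$ is quasi-split, $\mathrm{Gal}(\overline{\QQ}_p/\QQ_p)$ permutes $\Delta$, each $\alpha_i$ restricts to a simple root of the relative root system $\Phi(\mathbf H^\der_{\QQ_p},\mathbf S)$, and the distinct elements among $\mathrm{res}(\alpha_1),\dots,\mathrm{res}(\alpha_r)$ are precisely the relative simple roots $\beta_1,\dots,\beta_{r'}$ (with $r'\le r$); these form a $\QQ$-basis of $X^*(\mathbf S)_\QQ$ and generate the lattice $\mathrm{res}(\sum_i\ZZ\alpha_i)$, whose index in $X^*(\mathbf S)$ divides $[X^*(\mathbf T):\sum_i\ZZ\alpha_i]=|\mathbf{Z(H^\der)}|\le B$, where $B$ is the constant of Lemma \ref{modZ} (cf.\ the proof of Lemma \ref{char}). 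Restricting a weight now yields $\mathrm{res}(\widetilde\chi)=\sum_k d_k\beta_k$, where each $d_k$ is a sum of at most $r$ of the $c_i$, hence $|d_k|\le r_{\mathbf G}D_1$.

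It remains to pass from the $\QQ$-basis $\{\beta_k\}$ to a genuine $\ZZ$-basis of $X^*(\mathbf S)$, which I would do exactly as in the proof of Lemma \ref{char}: pick any $\ZZ$-basis $\{g_1,\dots,g_{r'}\}$ of $X^*(\mathbf S)$, write $\beta_k=\sum_l M_{lk}g_l$ with $M$ an integer matrix satisfying $|\det M|=[X^*(\mathbf S):\sum_k\ZZ\beta_k]\le B$, and left-multiply $M$ by a unimodular matrix $N$ so that $NM$ is in Hermite normal form, hence has non-negative integer entries all bounded by $B$. Then $\{N^{-1}g_1,\dots,N^{-1}g_{r'}\}$ is a $\ZZ$-basis of $X^*(\mathbf S)$ in which each $\beta_k$, and therefore each character $\mathrm{res}(\widetilde\chi)=\sum_k d_k\beta_k$ of $\mathbf S$ intervening in $\rho$, has coordinates of absolute value at most $r'\cdot r_{\mathbf G}D_1\cdot B\le r_{\mathbf G}^2 D_1 B$. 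Thus one may take $D_{\rm sp}:=r_{\mathbf G}^2 D_1 B$, which is uniform and effectively computable.

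The step that needs genuine care is the passage from a spanning set to a bounded basis: the images $\mathrm{res}(e_i)$ of a good $\ZZ$-basis $\{e_i\}$ of $X^*(\mathbf T)$ merely \emph{generate} $X^*(\mathbf S)$ and need not contain a basis of it, so one cannot restrict the basis of Lemma \ref{char} directly, and an arbitrary unimodular change of basis could destroy the size bound. Routing the argument through the relative simple roots --- which, by quasi-splitness, are restrictions of absolute simple roots, the rest of whose restrictions are non-negative integer combinations of them --- and then performing a single controlled Hermite reduction, with the index bounded by the order of the centre of $\mathbf H^\der$, is what makes the estimate go through; everything else is bookkeeping with the constants already produced in Lemmas \ref{modZ} and \ref{char}.
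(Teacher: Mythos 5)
Your argument is correct, but it travels a genuinely different road from the paper's. The paper's proof works with the maximal $\QQ_p$-anisotropic subtorus $\mathbf A$ of $\mathbf T := \mathbf Z_{\mathbf H^\der_{\QQ_p}}(\mathbf S)$: it builds two finite-index embeddings $\varphi\colon X^*(\mathbf S)\oplus X^*(\mathbf A)\hookrightarrow X^*(\mathbf T)$ and $\phi\colon X^*(\mathbf T)\hookrightarrow X^*(\mathbf S)\oplus X^*(\mathbf A)$ (the first imported from the proof of \cite{daw:degrees}, Lemma 7.4, with coefficient bound $b^{r_\mathbf G}D$, the second coming from multiplication $\mathbf S\times\mathbf A\to\mathbf T$), puts the matrix of $\phi$ in Hermite normal form, and reads off a basis of $X^*(\mathbf S)$ from the $\mathbf S$-component $\{N^{-1}f_1,\dots,N^{-1}f_s\}$; the final constant is $r_\mathbf G b^{2r_\mathbf G}D^2$. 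You avoid $\mathbf A$ entirely: you observe that quasi-splitness makes the absolute simple roots $\alpha_i$ restrict to the relative simple roots $\beta_k$, note that the $\beta_k$ span $X^*(\mathbf S)_\QQ$ with the index of $\sum_k\ZZ\beta_k$ in $X^*(\mathbf S)$ dividing $|\mathbf{Z(H^\der)}|\le B$ because restriction is surjective and the absolute root lattice has index $|\mathbf{Z(H^\der)}|$ in $X^*(\mathbf T)$, and then perform one Hermite reduction directly on the $\beta_k$'s inside $X^*(\mathbf S)$, getting the cleaner bound $r_\mathbf G^2 DB$ and no reliance on an external lemma for the anisotropic piece. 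Your route is arguably more transparent and yields a better constant, at the mild cost of invoking the standard facts about the Galois $*$-action and the relative root system of a quasi-split group (that no absolute simple root restricts to zero, and that two restrict to the same relative simple root iff they lie in the same Galois orbit). You also correctly flag the pitfall that the restrictions of a $\ZZ$-basis of $X^*(\mathbf T)$ only generate, and need not contain a basis of, $X^*(\mathbf S)$ --- this is exactly why both proofs need an explicit Hermite normal form step rather than a naive restriction of bases.
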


\begin{proof}
Let $\mathbf T$ denote the centraliser of $\mathbf S$ in $\mathbf H^\der_{\QQ_p}$. Since $\mathbf H^\der_{\QQ_p}$ is quasi-split, $\mathbf T$ is a maximal torus of $\mathbf H^\der_{\QQ_p}$. Let $\mathbf A$ denote the maximal $\QQ_p$-anisotropic subtorus of $\mathbf T$. By the proof of \cite{daw:degrees}, Lemma 7.4 (after replacing $\mathbf H$ with $\mathbf H^\der$ and adjusting to our notations), there exists an embedding
\begin{align*}
\varphi:L:=X^*(\mathbf S)\oplus X^*(\mathbf A)\rightarrow X^*(\mathbf T)
\end{align*}
such that, with respect to a basis $\{e_1,\ldots,e_r\}$ of $X^*(\mathbf T)$, the images of the characters of $S$ intervening in $\rho$ have coefficients of absolute value at most $b^{r_{\mathbf G}}D$, where $r_{\mathbf G}$ is the rank of $\mathbf G$, $b$ is the constant afforded to us by Lemma \ref{degF}, and $D$ is the constant afforded to us by Lemma \ref{char}.

The multiplication map $\mathbf S\times \mathbf A\rightarrow \mathbf T$ induces an embedding
\begin{align*}
\phi:X^*(\mathbf T)\rightarrow L=X^*(\mathbf S)\oplus X^*(\mathbf A)
\end{align*}
Choose a basis $\{f_1,...,f_s\}$ for $X^*(\mathbf S)$ and a basis $\{f_{s+1},\ldots,f_r\}$ for $X^*(\mathbf A)$. These combine to form a basis $\mathcal{B}$ of $L$ and we can write the $\phi(e_i)$ in terms of $\mathcal{B}$ to yield an $r\times r$ matrix $M$. As in the proof of Lemma \ref{char}, we can multiply $M$ on the left by a unimodular matrix $N$ such that $NM$ is in Hermite normal form. Since $L$ contains the image of $\phi$ and the image of $\varphi\circ\phi$ has index at most $b^{r_{\mathbf G}}D$, we conclude that the coefficients of $NM$ are bounded by $b^{r_{\mathbf G}}D$. That is, the absolute values of the coefficients of the $\phi(e_i)$ with respect to $N^{-1}\mathcal{B}$ are at most $b^{r_{\mathbf G}}D$.

It follows that the absolute values of the coefficients of the images under $\phi\circ\varphi$ of the characters of $S$ intervening in $\rho$ with respect to $\{N^{-1}f_1,\ldots,N^{-1}f_s\}$ are at most $r_{\mathbf G}b^{2r_{\mathbf G}}D^2$, and hence the same is true of the characters themselves.
\end{proof}

\begin{proof}[Proof of Theorem \ref{hecke}]
By Lemma \ref{tor}, since $p\notin\Sigma_V$, we can find a non-trivial maximal split torus $\mathbf S\subset \mathbf H^\der_{\QQ_p}$ such that $\mathbf S_{\ZZ_p}$ is a torus. Furthermore, by Lemma \ref{char}, there exists a basis of $X^*(\mathbf S)$ such that the coordinates of the characters intervening in $\rho$ are bounded in absolute value by a uniform, effectively computable constant $D_{\rm sp}$. We need the following Lemma (omitted in \cite{daw:degrees}).

\begin{lem}\label{nontriv}
For every $i\in I$, the split torus $\pi_i(\mathbf S)$ is non-trivial.
\end{lem}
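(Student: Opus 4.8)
The plan is to unwind the definition of the index set $I$ and combine it with Lemma \ref{nonfacnontriv}-style reasoning at the level of the $\QQ_p$-simple factors. Recall that $i\in I$ means precisely that the composite $\mathbf H^\der\hookrightarrow \mathbf G^\der\to \mathbf G^\ad=\mathbf P^\ad\to \mathbf P_i$ (note $\mathbf G=\mathbf G^\ad$, so $\mathbf P\subseteq\mathbf G$ and $\mathbf P^\ad$ sits inside $\mathbf G=\mathbf G^\ad$ compatibly) is non-trivial; equivalently, writing $\widetilde{\mathbf G}_i\subseteq\mathbf G^\der$ for the corresponding almost $\QQ$-simple normal factor as in the paragraph preceding Lemma \ref{nonfacnontriv}, the projection of $\mathbf H^\der$ to $\mathbf G_i$ is non-trivial. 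So the first step is to fix $i\in I$ and base change everything to $\QQ_p$. I would argue that $\mathbf S$, being a \emph{maximal} split torus of $\mathbf H^\der_{\QQ_p}$, is non-trivial (this is the hypothesis $p\notin\Sigma_V$ combined with Lemma \ref{tor}, which produces a non-trivial such $\mathbf S$), and that its image $\pi_i(\mathbf S)$ is a split subtorus of $\mathbf P_{i,\QQ_p}$.

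The key point is that $\pi_i(\mathbf S)$ cannot be trivial: if it were, then $\mathbf S$ would lie in the kernel of $\pi_i$ restricted to $\mathbf H^\der_{\QQ_p}$. First I would observe that $\mathbf H^\der$ is semisimple, hence equal to the almost direct product of its almost $\QQ_p$-simple factors, and $\pi_i\colon\mathbf H^\der\to\mathbf P_i$ being non-trivial means at least one such factor, call it $\mathbf F$, maps onto (with finite kernel, since $\mathbf P_i$ is $\QQ_p$-simple adjoint and the image is a non-trivial connected normal subgroup) $\mathbf P_i$. Then $\pi_i$ is a central isogeny $\mathbf F\to\mathbf P_i$ and is trivial on the other factors. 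Now a maximal split torus $\mathbf S$ of $\mathbf H^\der_{\QQ_p}$ projects onto a maximal split torus of each almost-simple factor; in particular its projection to $\mathbf F$ is a maximal split torus $\mathbf S_{\mathbf F}$ of $\mathbf F$. Since $\mathbf F$ is $\QQ_p$-isotropic — indeed $\mathbf F_{\QQ_p}$ is quasi-split (as $p\notin\Sigma(\mathbf H^\der)$ by $p\notin\Sigma_V$) and non-trivial, so it contains a non-trivial split torus — we get $\mathbf S_{\mathbf F}\ne 1$, and $\pi_i(\mathbf S)\supseteq\pi_i(\mathbf S_{\mathbf F})$ is the image of $\mathbf S_{\mathbf F}$ under the isogeny $\mathbf F\to\mathbf P_i$, which is again a non-trivial split torus. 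Hence $\pi_i(\mathbf S)\ne 1$.

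Concretely, the cleanest way to package this is via characters: $\pi_i(\mathbf S)$ is trivial iff every character of $\mathbf S$ that lies in the image of $X^*(\pi_i(\mathbf S))\to X^*(\mathbf S)$ is zero, which would force the composite $\mathbf S\to\mathbf H^\der_{\QQ_p}\xrightarrow{\pi_i}\mathbf P_{i,\QQ_p}$ to factor through a point. But $\mathbf S$ is maximal split in $\mathbf H^\der_{\QQ_p}$, and the preimage in $\mathbf H^\der_{\QQ_p}$ of a maximal torus of $\mathbf P_{i,\QQ_p}$ under $\pi_i$, intersected with the factor $\mathbf F$ mapping isogenously to $\mathbf P_i$, contains a non-trivial split subtorus by quasi-splitness of $\mathbf F_{\QQ_p}$; pushing this forward along the isogeny $\mathbf F\to\mathbf P_i$ keeps it non-trivial and split. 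I would spell this out using that an isogeny of tori over $\QQ_p$ sends a non-trivial split torus to a non-trivial split torus (the induced map on cocharacter lattices is injective with finite cokernel, so rationality of cocharacters is preserved).

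The main obstacle — really the only subtlety — is making precise that the projection of a \emph{maximal} split torus of $\mathbf H^\der_{\QQ_p}$ to the relevant almost-simple factor $\mathbf F$ is again maximal split in $\mathbf F_{\QQ_p}$, rather than merely some split subtorus that could conceivably be trivial. This follows because, for an almost direct product decomposition, a maximal split torus is an almost direct product of maximal split tori of the factors (choose a maximal split torus of each factor, take their product, and check it is maximal split in the product, then conjugate); combined with the fact that a non-trivial quasi-split $\QQ_p$-group has a non-trivial maximal split torus, we conclude $\pi_i(\mathbf S_{\mathbf F})\ne 1$ and therefore $\pi_i(\mathbf S)\ne 1$. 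Everything else is bookkeeping with the decomposition $\mathbf G^\der=\widetilde{\mathbf G}_1\cdots\widetilde{\mathbf G}_n$ recalled before Lemma \ref{nonfacnontriv} and the definition of $I$.
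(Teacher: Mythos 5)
Your argument is essentially the same as the paper's, with one organizational difference and a couple of inaccuracies in the justifications that don't actually affect the conclusion. The paper passes to the simply connected cover $\widetilde{\mathbf H}\rightarrow\mathbf H^\der$, which is a \emph{direct} product of its $\QQ_p$-almost-simple factors $\mathbf H_j$; this makes the decomposition of a maximal split torus into factors $\mathbf S_j$, and the projections $g_{i,j}\colon\mathbf H_j\rightarrow\pi_i(\mathbf H^\der)_{\QQ_p}$, completely unambiguous. You work directly with $\mathbf H^\der_{\QQ_p}$ as an almost direct product, which is why you run into the subtlety you flag at the end (making precise what "projection of $\mathbf S$ to the factor $\mathbf F$" means and why it is a maximal split torus there). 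That can be done, but the paper's route through $\widetilde{\mathbf H}$ is precisely the device that sidesteps it. Two small errors in your reasoning: the image $\pi_i(\mathbf F)$ is a subgroup of $\mathbf P_{i}$ that is \emph{not} normal in general (nothing forces $\mathbf F$, or $\mathbf H^\der$, to be normal in $\mathbf P$), so the claim that $\mathbf F$ maps \emph{onto} $\mathbf P_i$ is unjustified; and $\mathbf P_i$ is $\QQ$-simple, not $\QQ_p$-simple, so $\mathbf P_{i,\QQ_p}$ may not be simple at all. Neither matters, because what you actually use — that $\ker(\pi_i|_{\mathbf F})$ is finite, hence that a non-trivial split torus in $\mathbf F$ has non-trivial image — follows from the almost-simplicity of $\mathbf F$ alone, exactly as in the paper's proof, and you invoke that too. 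So the substance is right; cleaning up the justifications (or passing through $\widetilde{\mathbf H}$ as the paper does) would make it airtight.
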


\begin{proof}
Let $\rho:\wt{\mathbf H}\rightarrow \mathbf H^\der$ denote the simply connected cover and let $\wt{\mathbf S}$ denote a maximal split torus of $\wt{\mathbf H}_{\QQ_p}$ such that $\rho(\wt{\mathbf S})=\mathbf S$. We can write $\wt{\mathbf S}$ as a product of maximal split tori $\mathbf S_j$ in the $\QQ_p$-almost-simple factors $\mathbf H_j$ of $\wt{\mathbf H}_{\QQ_p}$. 

The map $\rho_{\QQ_p}$ composed with the inclusion of $\mathbf H^\der_{\QQ_p}$ in the product of the $\pi_i(\mathbf H^\der)_{\QQ_p}$ is given by maps $f_i$, each a product of morphisms $g_{i,j}:\mathbf H_j\rightarrow\pi_i(\mathbf H^\der)_{\QQ_p}$. If $i\in I$, then one of the $g_{i,j}$ must be non-trivial. Since $\mathbf H_j$ is almost-simple, $\ker g_{i,j}$ is finite and, therefore, $g_{i,j}(\mathbf S_j)$ is non-trivial.
\end{proof}

The proof of Theorem \ref{hecke} now proceeds identically to \cite{daw:degrees}, Theorem 7.1, restricting to the $\pi_i$ for $i\in I$. By the proof of \cite{ey:subvarieties}, Lemma 7.4.3, we see that we can take
\begin{align*}
k'=An^3(2D_{\rm sp})r_\mathbf G,
\end{align*}
where $A$ denotes the constant afforded to us by Lemma \ref{inc} and $r_\mathbf G$ denotes the rank of $\mathbf G$.
Now let $W$ denote the finite Weyl group of $\mathbf H^\der_{\QQ_p}$, let $\Phi^+$ denote the set of positive roots, and let $d$ denote the maximum of the values associated to the vertices of the local Dynkin diagram. We see from the proof of \cite{KY:AO}, Lemma 8.1.6 (b) that it suffices to find $f$ such that
\begin{align*}
p^f>|W|^2p^{d|\Phi^+|}.
\end{align*}
This is possible given that $|W|$, $d$, and $|\Phi^+|$ are themselves all uniformly bounded by an effectively computable constant. Finally, as in the proof of \cite{daw:degrees}, Theorem 7.1, we let $k=k'+f$.
\end{proof}

\section{The geometric criterion}

In this section, we give an almost immediate generalisation of \cite{daw:degrees}, Theorem 8.1.

\begin{teo}\label{criterion}
Let $(\mathbf G,X)$ be a Shimura datum and let $\mathbf G^\ad=\mathbf G_1\times\cdots\times \mathbf G_n$ denote the decomposition of $\mathbf G^\ad$ into the product of its $\QQ$-simple factors. Let $X^+$ be a connected component of $X$ and let $K$ be a neat compact open subgroup of $\mathbf G(\AAA_f)$ equal to a product of compact open subgroups $K_p\subset \mathbf G(\QQ_p)$. 

Let $V$ be a positive-dimensional special subvariety of $S_K(\mathbf G,X^+)$ defined by $(\mathbf H,X_\mathbf H)$ and $X^+_{\mathbf H}$ and contained in a Hodge generic subvariety $Z$ of $S_K(\mathbf G,X^+)$. Let $I\subseteq\{1,\ldots,n\}$ denote the set of those $i$ for which the natural projection $\pi_i:\mathbf G\rightarrow \mathbf G_i$ restricts non-trivially to $\mathbf H^\der$. Suppose that there exists a prime $p$ and an $\alpha\in \mathbf G(\QQ_p)$ such that
\begin{itemize}
\item $Z\subseteq T_\alpha(Z)$ and
\item $\pi_i(k_1\alpha k_2)$ generates an unbounded subgroup of $\mathbf G_i(\QQ_p)$ for every $i\in I$ and for all $k_1,k_2\in K_p$.
\end{itemize}
Then, either
\begin{itemize}
\item[(1)] $Z$ contains a special subvariety $V'$ of $S_K(\mathbf G,X^+)$ such that $V\subsetneq V'$, or
\item[(2)] the image of $\mathbf H^\der$ in $\mathbf G^\ad$ is equal to $\prod_{i\in I}\mathbf G_i$.
\end{itemize}
\end{teo}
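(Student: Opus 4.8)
The plan is to mimic the proof of \cite{daw:degrees}, Theorem 8.1, carrying the semisimple group $\mathbf H^\der$ in place of $\mathbf H$ throughout (which is legitimate, since that argument only uses semisimplicity of the group attached to $V$) and restricting attention to the $\QQ$-simple factors $\mathbf G_i$ with $i\in I$. One first reduces to the case $\mathbf G=\mathbf G^\ad$: the variety $S_K(\mathbf G,X^+)$, the notions of special subvariety and of Hecke correspondence, the factorisation $\mathbf G^\ad=\prod_i\mathbf G_i$, the hypotheses on $\alpha$, and both conclusions depend only on $(\mathbf G^\ad,X^\ad)$ together with the image of $K$ in $\mathbf G^\ad(\AAA_f)$.

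I would begin by extracting a correspondence on $Z$ from the hypothesis $Z\subseteq T_\alpha(Z)$. Put $K''=K\cap\alpha K\alpha^{-1}$ and $K'=\alpha^{-1}K\alpha\cap K$; since $K$ is neat, so are $K'$ and $K''$, and the morphisms $\pi_{K'',K}$, $\pi_{K',K}$ are finite \'etale. The inclusion $Z\subseteq T_\alpha(Z)$ then furnishes an irreducible component $\widetilde Z$ of $\pi_{K'',K}^{-1}(Z)$ together with two finite \'etale surjections $q_1:=\pi_{K'',K}|_{\widetilde Z}\colon\widetilde Z\to Z$ and $q_2:=\bigl(\pi_{K',K}\circ[\alpha]\bigr)|_{\widetilde Z}\colon\widetilde Z\to Z$, and, because Hecke correspondences are compatible with the canonical variation of Hodge structure $\mathbb V$ attached to $\rho$ on $\Sh_K(\mathbf G,X)$, we have $q_1^\ast(\mathbb V|_Z)\cong q_2^\ast(\mathbb V|_Z)$. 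Pulling $V$ back along $q_1$, pushing forward along $q_2$, and iterating, I obtain a sequence $V=V_0,V_1,V_2,\dots$ of special subvarieties of $S_K(\mathbf G,X^+)$, each contained in $Z$, each of dimension $d:=\dim V$, with $V_{n+1}$ an irreducible component of $T_\alpha(V_n)$. Since a Hecke correspondence alters only the adelic component of a point and not its image in $X^+$, a Hodge generic point of every $V_n$ lifts to a Hodge generic point of $X_\mathbf H^+$, so each $V_n$ is associated with the \emph{same} Shimura subdatum $(\mathbf H,X_\mathbf H)$; in particular, using Section \ref{derived} and Borel density, the connected algebraic monodromy group of $\mathbb V|_{V_n}$ equals $\mathbf H^\der$ up to conjugacy, and the image of $\mathbf H^\der$ in $\mathbf G_i$ is non-trivial exactly for $i\in I$ (Lemma \ref{nonfacnontriv} gives $I=\{1,\dots,n\}$ when $V$ is non-facteur).

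Next I would pass to the Zariski closure. Following the argument of \cite{daw:degrees}, Theorem 8.1 (which rests on the Edixhoven--Yafaev and Klingler--Yafaev method), one produces from the $V_n$ a special subvariety $Y$ with $V\subseteq Y\subseteq Z$ and $Y\subseteq T_\alpha(Y)$ whose connected algebraic monodromy group $\mathbf M_Y$ surjects onto $\prod_{i\in I}\mathbf G_i$ under $\mathbf G\to\mathbf G^\ad$: the \'etale correspondence $(q_1,q_2)$ together with the isomorphism $q_1^\ast\mathbb V\cong q_2^\ast\mathbb V$ forces $\mathbf M_Y$ to contain $\mathbf H^\der$ and a supply of its conjugates governed by the double coset $K_p\alpha K_p$, and the hypothesis that $\pi_i(k_1\alpha k_2)$ generates an unbounded subgroup of $\mathbf G_i(\QQ_p)$ for all $i\in I$ and all $k_1,k_2\in K_p$ makes these conjugates generate, after projection, all of $\mathbf G_i$; conversely, Andr\'e's theorem constrains $\mathbf M_Y$ to be a normal subgroup of the derived group of the generic Mumford--Tate group of $Y$, and the two constraints together pin $Y$ down as a special subvariety with the stated property. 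Granting this, the conclusion is a dichotomy: if $Y\supsetneq V$ then $Y$ is a special subvariety of $Z$ strictly containing $V$, which is $(1)$; if $Y=V$ then $\mathbf H^\der=\mathbf M_Y$ surjects onto $\prod_{i\in I}\mathbf G_i$, and since $I$ is precisely the set of indices on which $\mathbf H^\der$ has non-trivial image in $\mathbf G_i$, the image of $\mathbf H^\der$ in $\mathbf G^\ad$ equals $\prod_{i\in I}\mathbf G_i$, which is $(2)$.

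The hard part will be the decisive step of the previous paragraph: converting the $p$-adic unboundedness hypothesis on $K_p\alpha K_p$ into lower bounds on the $\QQ$-algebraic monodromy group $\mathbf M_Y$, and then invoking Andr\'e's normality theorem to conclude that $Y$ is special. This is exactly the mechanism we import from \cite{daw:degrees}, Theorem 8.1; the extension needed there is purely organisational — replacing $\mathbf H$ by the semisimple group $\mathbf H^\der$ throughout and keeping track of the index set $I$ — so that, as claimed, Theorem \ref{criterion} follows from \cite{daw:degrees}, Theorem 8.1 with no genuinely new Hodge-theoretic input.
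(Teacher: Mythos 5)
Your proposal follows the same route as the paper: it invokes the proof of \cite{daw:degrees}, Theorem 8.1 with $\mathbf H^\der$ in place of $\mathbf H$ and with the projections restricted to the index set $I$, and it concludes with the same dichotomy. The paper's own proof is precisely this one-line adaptation — it observes that the group $\pi_i(U_p)$ from \cite{daw:degrees}, Theorem 8.1 is only unbounded for $i\in I$, deduces that either conclusion (1) holds or the image of $\mathbf H^\der$ in $\mathbf G^\ad$ decomposes as $\mathbf G_i\times\mathbf H^\der_{\neq i}$ for all $i\in I$, and notes that the latter immediately gives conclusion (2). Your elaboration of the internals of the cited argument (the iterated Hecke orbit, the algebraic monodromy group, Andr\'e's normality theorem) is consistent in spirit, though one small imprecision is worth flagging: the step imported from \cite{daw:degrees}, Theorem 8.1 yields, for each $i\in I$ separately, that the image of $\mathbf H^\der$ factors as $\mathbf G_i\times\mathbf H^\der_{\neq i}$; it is only by combining these factorizations over all $i\in I$ (a short induction) that one passes from surjection onto each $\mathbf G_i$ to equality with the full product $\prod_{i\in I}\mathbf G_i$, ruling out diagonal embeddings. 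Your write-up jumps directly to the surjection onto the product, which is the correct conclusion but elides this last combinatorial step.
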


\begin{rem}
Note that, by Lemma \ref{nonfacnontriv}, if $V$ is non-facteur, then the second conclusion can only hold if $V=Z=S_K(\mathbf G,X^+)$.
\end{rem}

\begin{proof}[Proof of Theorem \ref{criterion}]
The proof is almost identical to that of \cite{daw:degrees}, Theorem 8.1. We must simply account for the fact that the group $\pi_i(U_p)$ defined in the proof of \cite{daw:degrees}, Theorem 8.1 is only unbounded for $i\in I$. Therefore, restricting to those $i\in I$, we obtain the first conclusion of the theorem, unless, for all $i\in I$, the image of $\mathbf H^\der$ in $\mathbf G^\ad$ is equal to $\mathbf G_i\times \mathbf H^\der_{\neq i}$, where $\mathbf H^\der_{\neq i}$ denotes the projection of $\mathbf H^\der$ to $\prod_{j\in I\setminus\{i\}}\mathbf G_i$. From this, the second conclusion follows immediately. 
\end{proof}

\section{Assumptions and reductions for Theorem \ref{maintheorem}}\label{statement}

The assumptions alluded to in Theorem \ref{maintheorem} are as follows:
\begin{itemize}
	\item[(1)] $\mathbf G=\mathbf G^\ad$;
	\item[(2)] $K$ is equal to a product of compact open subgroups $K_p\subset \mathbf G(\QQ_p)$.
\end{itemize}

In this section, we will show that it suffices to prove Theorem \ref{maintheorem} under the following additional assumptions:
\begin{itemize}
	\item[(3)] $Z$ is contained in a connected component $S_K(\mathbf G, X^+)$ of $\Sh_K(\mathbf G, X)$ for some fixed connected component $X^+$ of $X$;
	\item[(4)] the compact open subgroup $K\subset \mathbf G(\AAA_f)$ is neat;
\end{itemize}

\subsection{Working in a connected component of $S$}

Let $X^+$ denote a connected component of $X$. Since $Z$ is irreducible, it belongs to an irreducible component of $\Sh_K(\mathbf G,X)$. That is, there exists $\alpha\in \mathbf G(\AAA_f)$ such that $Z$ is contained in the image of $X^+\times\{\alpha\}$ in $\Sh_K(\mathbf G,X)$ (by the proof of \cite{milne:intro}, Lemma 5.11, every element of $X$ is of the form $qx$, where $q\in \mathbf G(\QQ)$ and $x\in X^+$). Therefore, since the isomorphism
\begin{align*}
[\alpha^{-1}]:\Sh_K(\mathbf G,X)\rightarrow\Sh_{\alpha K\alpha^{-1}}(\mathbf G,X)
\end{align*}
preserves degrees and the property of being a non-facteur maximal special subvariety, we can and do assume that $Z$ is contained in $S_K(\mathbf G,X^+)$. This justifies our assumption (3).

\subsection{Assuming that $K$ is neat}

By the paragraphs preceding \cite{uy:andre-oort}, Lemma 2.11, $K$ contains a neat compact open subgroup $K'$ of $G(\AAA_f)$ whose index in $K$ is at most $|\GL_n(\FF_3)|$. We obtain a finite morphism
\begin{align*}
\pi_{K',K}:\Sh_{K'}(\mathbf G,X)\rightarrow\Sh_K(\mathbf G,X),
\end{align*}
of degree $[K:K']$ and, by \cite{KY:AO}, Proposition 5.3.2 (1), we have $\pi_{K',K}^*L_K=L_{K'}$. Therefore, if we let $Z'$ denote an irreducible component of $\pi_{K',K}^{-1}(Z)$, then, by the projection formula, we have
\begin{align*}
\deg_{L_{K'}}Z'\leq[K:K']\deg_{L_K}Z.
\end{align*}
Furthermore, if $V$ is a non-facteur maximal special subvariety of $Z$, then some irreducible component $V'$ of $\pi_{K',K}^{-1}(V)$ is a non-facteur maximal special subvariety of $Z'$ and
\begin{align*}
\deg_{L_K}V\leq \deg_{L_{K'}}V'.
\end{align*}
We conclude that we lose no generality in Theorem \ref{maintheorem} if we assume that $K$ is neat.

\section{Proof of Main Theorem \ref{maintheorem}}

Finally, we are in a position to combine the various tools already established. First we prove an inductive step.

\begin{prop}\label{induction}
Let $(\mathbf G,X)$ be a Shimura datum such that $\mathbf G=\mathbf G^\ad$ and let $X^+$ be a connected component of $X$. Fix a faithful representation $\rho:\mathbf G\rightarrow\GL_n$ and let $K\subset \mathbf G(\AAA_f)$ be a neat compact open subgroup equal to the product of compact open subgroups $K_p\subset \mathbf G(\QQ_p)$. Let $k$ and $f$ be the effectively computable, positive integers afforded to us by Theorem \ref{hecke}.

Let $(\mathbf P,X_\mathbf P)$ be a Shimura subdatum and let $X^+_\mathbf P$ denote a connected component of $X_\mathbf P$ contained in $X^+$. Let $Z$ be a Hodge generic, proper subvariety of $S_{K(\mathbf P)}(\mathbf P,X_\mathbf P^+)$ and let $V$ be a maximal special subvariety of $Z$ of positive dimension defined by $(\mathbf H,X_\mathbf H)$ and $X^+_{\mathbf H}$. Then, either 
\begin{itemize}
\item the image of $\mathbf H^\der$ in $\mathbf P^\ad$ is equal to a product of $\QQ$-simple factors, or
\item for any prime $p\notin\Sigma_{V}$ such that $K_p=\mathbf G_{\ZZ_p}(\ZZ_p)$, there exists an irreducible subvariety $Y\subsetneq Z$ containing $V$ such that
\begin{align*}
\deg_{L_{K(\mathbf P)}}(Y)\leq p^{2f+k}\deg_{L_{K(\mathbf P)}}(Z)^2.
\end{align*}
\end{itemize}
\end{prop}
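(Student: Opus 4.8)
The plan is to use Theorem~\ref{hecke} to produce a Hecke correspondence on a finite \'etale cover of $S_{K(\mathbf P)}(\mathbf P,X_\mathbf P^+)$, and then to split into two cases according to whether this correspondence maps the pullback of $Z$ into itself. Fix a prime $p\notin\Sigma_V$ with $K_p=\mathbf G_{\ZZ_p}(\ZZ_p)$ and apply Theorem~\ref{hecke} to obtain a compact open subgroup $I(\mathbf P)_p\subseteq K(\mathbf P)_p$ and an element $\alpha\in\mathbf P(\QQ_p)$ satisfying (1)--(4). Put $I(\mathbf P):=K(\mathbf P)^pI(\mathbf P)_p$, a neat compact open subgroup of $\mathbf P(\AAA_f)$ of product form, and $\pi:=\pi_{I(\mathbf P),K(\mathbf P)}\colon S_{I(\mathbf P)}(\mathbf P,X_\mathbf P^+)\to S_{K(\mathbf P)}(\mathbf P,X_\mathbf P^+)$, a finite \'etale morphism. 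Choose an irreducible component $\widetilde V$ of $\pi^{-1}(V)$; by (3) we have $\widetilde V\subseteq T_\alpha(\widetilde V)$, and $\widetilde V$ is a positive-dimensional special subvariety of $S_{I(\mathbf P)}(\mathbf P,X_\mathbf P^+)$ again associated with $(\mathbf H,X_\mathbf H)$ (so the index set $I$ is unchanged, conjugating $\mathbf H$ by an element of $\mathbf P(\QQ)$ permuting no $\QQ$-simple factor of $\mathbf P^\ad$), with $\pi(\widetilde V)=V$. Let $\widetilde Z$ be the irreducible component of $\pi^{-1}(Z)$ containing $\widetilde V$; since $Z$ is Hodge generic and a proper subvariety of $S_{K(\mathbf P)}(\mathbf P,X_\mathbf P^+)$, the variety $\widetilde Z$ is Hodge generic and a proper subvariety of $S_{I(\mathbf P)}(\mathbf P,X_\mathbf P^+)$, and $\pi(\widetilde Z)=Z$. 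As $T_\alpha$ preserves inclusions of supports, $\widetilde V\subseteq T_\alpha(\widetilde V)\subseteq T_\alpha(\widetilde Z)$, so $\widetilde V\subseteq\widetilde Z\cap T_\alpha(\widetilde Z)$.

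Suppose first that $\widetilde Z\subseteq T_\alpha(\widetilde Z)$. Then the hypotheses of Theorem~\ref{criterion} hold for the Shimura datum $(\mathbf P,X_\mathbf P)$, the level $I(\mathbf P)$, the connected component $X_\mathbf P^+$, the Hodge generic subvariety $\widetilde Z$, the special subvariety $\widetilde V$, and the prime $p$ and element $\alpha$: indeed property (4) of Theorem~\ref{hecke} is exactly the statement that $\pi_i(k_1\alpha k_2)$ generates an unbounded subgroup of $\mathbf P_i(\QQ_p)$ for each $i\in I$ and all $k_1,k_2\in I(\mathbf P)_p$. If the first conclusion of Theorem~\ref{criterion} held, there would be a special subvariety $V'$ of $S_{I(\mathbf P)}(\mathbf P,X_\mathbf P^+)$ with $\widetilde V\subsetneq V'\subseteq\widetilde Z$; then $\pi(V')$ would be a special subvariety of $S_{K(\mathbf P)}(\mathbf P,X_\mathbf P^+)$ contained in $Z$, and, $\pi$ being finite, $\dim\pi(V')=\dim V'>\dim\widetilde V=\dim V$ together with $V=\pi(\widetilde V)\subseteq\pi(V')$ would give $V\subsetneq\pi(V')$, contradicting the maximality of $V$ in $Z$. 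Hence the second conclusion of Theorem~\ref{criterion} holds: the image of $\mathbf H^\der$ in $\mathbf P^\ad$ equals $\prod_{i\in I}\mathbf P_i$, a product of $\QQ$-simple factors of $\mathbf P^\ad$. This is the first alternative of the proposition.

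Now suppose $\widetilde Z\not\subseteq T_\alpha(\widetilde Z)$, so $\widetilde Z\cap T_\alpha(\widetilde Z)$ is a proper closed subset of the irreducible variety $\widetilde Z$. Let $\widetilde Y$ be an irreducible component of $\widetilde Z\cap T_\alpha(\widetilde Z)$ containing $\widetilde V$ (one exists since $\widetilde V$ is irreducible and contained in the intersection), and set $Y:=\pi(\widetilde Y)$. Then $Y$ is an irreducible subvariety of $Z$ with $V=\pi(\widetilde V)\subseteq Y$ and, $\pi$ being finite, $\dim Y=\dim\widetilde Y<\dim\widetilde Z=\dim Z$, so $Y\subsetneq Z$. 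To bound the degree, write $L':=L_{I(\mathbf P)}$ and recall $\pi^*L_{K(\mathbf P)}=L'$. By the projection formula, $\deg_{L_{K(\mathbf P)}}(Y)\leq\deg_{L'}(\widetilde Y)$, and $\deg_{L'}(\widetilde Z)=\deg(\pi|_{\widetilde Z})\cdot\deg_{L_{K(\mathbf P)}}(Z)\leq[K(\mathbf P)_p:I(\mathbf P)_p]\cdot\deg_{L_{K(\mathbf P)}}(Z)\leq p^{f}\deg_{L_{K(\mathbf P)}}(Z)$ by (1). Moreover, both projections in the correspondence defining $T_\alpha$ pull $L'$ back to the Baily--Borel bundle on $\Sh_{I(\mathbf P)\cap\alpha I(\mathbf P)\alpha^{-1}}(\mathbf P,X_\mathbf P)$, and the first projection is finite \'etale of degree $[I(\mathbf P):I(\mathbf P)\cap\alpha I(\mathbf P)\alpha^{-1}]=[I(\mathbf P)_p:I(\mathbf P)_p\cap\alpha I(\mathbf P)_p\alpha^{-1}]\leq p^{k}$ by (2) (using $\alpha\in\mathbf P(\QQ_p)$, so only the $p$-component is affected); hence, by the projection formula, $\deg_{L'}(T_\alpha(\widetilde Z))\leq p^{k}\deg_{L'}(\widetilde Z)\leq p^{f+k}\deg_{L_{K(\mathbf P)}}(Z)$. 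Finally, a B\'ezout-type inequality on the Baily--Borel compactification (with respect to $L'$) applied to the irreducible component $\widetilde Y$ of $\widetilde Z\cap T_\alpha(\widetilde Z)$ gives $\deg_{L'}(\widetilde Y)\leq\deg_{L'}(\widetilde Z)\cdot\deg_{L'}(T_\alpha(\widetilde Z))\leq p^{2f+k}\deg_{L_{K(\mathbf P)}}(Z)^2$, whence $\deg_{L_{K(\mathbf P)}}(Y)\leq p^{2f+k}\deg_{L_{K(\mathbf P)}}(Z)^2$, which is the second alternative. The structural part of the argument is routine; the point requiring care is this last chain of degree estimates — the behaviour of Baily--Borel degrees under a Hecke correspondence and the B\'ezout inequality on the (singular) Baily--Borel compactification — both of which are carried out exactly as in the corresponding steps of \cite{daw:degrees} (and \cite{KY:AO}, Section~5).
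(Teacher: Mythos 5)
Your proof is correct and follows essentially the same route as the paper: apply Theorem~\ref{hecke} to produce $(I(\mathbf P)_p,\alpha)$, pass to the cover $S_{I(\mathbf P)}(\mathbf P,X_\mathbf P^+)$, split on whether $\widetilde Z\subseteq T_\alpha(\widetilde Z)$, invoke Theorem~\ref{criterion} in one case (with maximality of $V$ ruling out its first conclusion) and B\'ezout plus the projection formula in the other. Your intermediate grouping in the degree chain differs slightly (you bound $\deg_{L'}T_\alpha(\widetilde Z)$ explicitly before B\'ezout, whereas the paper writes it as $(\deg_{L'}\widetilde Z)^2$ times the index), but this is purely cosmetic and yields the identical final bound $p^{2f+k}\deg_{L_{K(\mathbf P)}}(Z)^2$.
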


\begin{proof}
Let $\mathbf P^\ad=\mathbf P_1\times\cdots\times \mathbf P_n$ denote the decomposition of $\mathbf P^\ad$ into the product of its $\QQ$-simple factors and let $I\subseteq\{1,\ldots,n\}$ denote the set of those $i$ for which the natural projection $\pi_i:\mathbf P\rightarrow \mathbf P_i$ restricts non-trivially to $\mathbf H^\der$.

By Theorem \ref{hecke}, for any prime $p$ as in the statement of the theorem, there exists a compact open subgroup $I(\mathbf P)_p\subset \mathbf P(\QQ_p)$ contained in $K(\mathbf P)_p:=K_p\cap \mathbf P(\QQ_p)$ and an $\alpha\in\mathbf P(\QQ_p)$ such that
\begin{itemize}
\item[(1)] $[K(\mathbf P)_p:I(\mathbf P)_p]\leq p^f$,

\medskip

\item[(2)] $[I(\mathbf P)_p:I(\mathbf P)_p\cap\alpha I(\mathbf P)_p\alpha^{-1}]\leq p^k$,

\medskip

\item[(3)] if $I(\mathbf P):=K(\mathbf P)^pI(\mathbf P)_p\subset \mathbf P(\AAA_f)$,
\begin{align*}
\pi_{I(\mathbf P),K(\mathbf P)}:S_{I(\mathbf P)}(\mathbf P,X_\mathbf P^+)\rightarrow S_{K(\mathbf P)}(\mathbf P,X_\mathbf P^+)
\end{align*}

denotes the natural morphism, and $\widetilde{V}$ is an irreducible component of $\pi_{I(\mathbf P),K(\mathbf P)}^{-1}(V)$, then $\widetilde{V}\subseteq T_{\alpha}(\widetilde{V})$, and

\medskip

\item[(4)] $\pi_i(k_1\alpha k_2)$ generates an unbounded subgroup of $\mathbf P_i(\QQ_p)$ for every $i\in I$ and for all $k_1,k_2\in I(\mathbf P)_p$.
\end{itemize}

Let $\widetilde{Z}$ be an irreducible component of $\pi_{I(\mathbf P),K(\mathbf P)}^{-1}(Z)$ containing $\widetilde{V}$. Since $V$ is a maximal special subvariety of $Z$, $\widetilde{V}$ is a maximal special subvariety of $\widetilde{Z}$. Therefore, by Theorem \ref{criterion}, either the second conclusion of the theorem holds, and the proof is finished, or we can eliminate the possibility that $\widetilde{Z}$ is contained in $T_{\alpha}(\widetilde{Z})$. 

In the latter case, the irreducible components of $\widetilde{Z}\cap T_{\alpha}(\widetilde{Z})$ are strictly contained in $\widetilde{Z}$. By (3) above, we also have 
\begin{align*}
\widetilde{V}\subseteq \widetilde{Z}\cap T_{\alpha}(\widetilde{Z}).
\end{align*}
Therefore, let $\widetilde{Y}$ denote an irreducible component of the $\widetilde{Z}\cap T_{\alpha}(\widetilde{Z})$ containing $\widetilde{V}$, and let $Y:=\pi_{I(\mathbf P),K(\mathbf P)}(\widetilde{Y})$. We have
\begin{align*}
\deg_{L_{K(\mathbf P)}}Y&\leq\deg_{L_{I(\mathbf P)}}\widetilde{Y}\\
&\leq(\deg_{L_{I(\mathbf P)}}\widetilde{Z})^2\cdot[I(\mathbf P)_p:I(\mathbf P)_p\cap\alpha I(\mathbf P)_p\alpha^{-1}]\\
&\leq p^{2f+k}\cdot(\deg_{L_{K(\mathbf P)}}Z )^2,
\end{align*}
where the first inequality follows from the projection formula, the second follows from Bezout's theorem, and the third follows from (2) and the projection formula combined with (1). This finishes the proof.
\end{proof}

Finally, we prove the main theorem.

\begin{proof}[Proof of Theorem \ref{maintheorem}] We can and do assume the additional assumptions (3) and (4) from Section \ref{statement} without loss of generality. In particular, we assume that $K$ is neat and that there exists some connected component $X^+$ of $X$ such that $Z$ is contained in the connected component $S_K(\mathbf G,X^+)$ of $\Sh_K(\mathbf G,X)$. 
	
	We fix a faithful representation $\rho:\mathbf G\rightarrow\GL_n$ and we let $c$ and $\delta$ denote the effectively computable, positive constants afforded to us by Theorem \ref{mainbound}. We let $k$ and $f$ denote the effectively computable, positive integers afforded to us by Theorem \ref{hecke} and we let $d$ denote the effectively computable constant afforded to us by Lemma \ref{lowerbound}.

Let $V$ denote a non-facteur maximal special subvariety of $Z$.

Furthermore, let $(\mathbf P,X_\mathbf P)$ be a Shimura subdatum of $(\mathbf G,X)$ and let $X^+_\mathbf P$ be a connected component of $X_\mathbf P$ contained in $X^+$ such that $Z$ is a Hodge generic subvariety of the image of $S_{K(\mathbf P)}(\mathbf P,X_\mathbf P^+)$ in $S_{K}(\mathbf G,X^+)$ under the induced morphism
\begin{align*}
\iota_\mathbf P:\Sh_{K(\mathbf P)}(\mathbf P,X_\mathbf P)\rightarrow\Sh_K(\mathbf G,X).
\end{align*}

Let $V_\mathbf P$ denote an irreducible component of $\iota_\mathbf P^{-1}(V)$ contained in $S_{K(\mathbf P)}(\mathbf P,X_\mathbf P^+)$ and let $Z_\mathbf P$ be an irreducible component of $\iota_\mathbf P^{-1}(Z)$ containing $V_\mathbf P$. We can and do assume that  $Z_P$ is a proper subvariety of $S_{K(\mathbf P)}(\mathbf P,X_\mathbf P^+)$ since, otherwise, there is nothing to prove (by the maximality of $V$). As $V_\mathbf P$ is a non-facteur maximal special subvariety of $Z_\mathbf P$, Proposition \ref{induction} implies that, for any prime $p\notin\Sigma_{V}$ such that $K_p=\mathbf G_{\ZZ_p}(\ZZ_p)$, there exists an irreducible subvariety $Y_\mathbf P\subsetneq Z_\mathbf P$ containing $V_\mathbf P$ such that
\begin{align*}
\deg_{L_{K(\mathbf P)}}Y_\mathbf P\leq p^{(2f+k)}(\deg_{L_K(\mathbf P)}Z_\mathbf P)^2.
\end{align*}
By Lemma \ref{upperbound}, we have
\begin{align*}
\deg_{L_{K(\mathbf P)}}Z_\mathbf P\leq\deg_{L_{K}}Z.
\end{align*}
Setting $Y:=\iota_\mathbf P(Y_\mathbf P)\subsetneq Z$, we obtain a subvariety containing $V$, which satisfies
\begin{align*}
\deg_{L_K}Y\leq d\cdot \deg_{L_{K(\mathbf P)}}Y \leq dp^{2f+k}(\deg_{L_K(\mathbf{P})}Z)^2\leq dp^{(2f+k)}(\deg_{L_{K}}Z)^2
\end{align*}
by Lemma \ref{lowerbound}, used in the first inequality, and Lemma \ref{upperbound}, used in the third inequality.
Iterating this procedure at most $\dim Z-2$ times, we deduce that
\begin{align}\label{ineq}
\deg_{L_{K}}V\leq [dp^{(2f+k)}]^{(\dim Z-2)}(\deg_{L_K}Z)^{2^{\dim Z-2}}.
\end{align}

On the other hand, by Theorem \ref{mainbound},
\begin{align}\label{ineq2}
\deg_{L_{K}}V\geq c_1\Pi_V^{\delta}.
\end{align}
Let $N:=N(K)$ be such that, for all primes $p\geq N$, we have $K_p=\mathbf G_{\ZZ_p}(\ZZ_p)$. Recall that we are allowed to choose any $p$ not dividing $\Pi_V$ such that $p\geq N$. To that end, fix an $\epsilon\in(0,\frac{1}{2})$. Then, by the Prime Number Theorem, there exists an absolute, effectively computable, positive constant $c(\epsilon)\in(0,1)$ such that, for any $x\geq 2$, there are more than $c(\epsilon)x^{1-\epsilon}$ primes less than $x$. On the other hand, the number of primes dividing $\Pi_V$ is at most $(\log 2)^{-1}\log\Pi_V$. Therefore, it follows from (\ref{ineq2}) that there exists a prime
\begin{align*}
p\in[N,c(\epsilon)^{-1/(1-\epsilon)}[(\log (c^{-1}_1\cdot\deg_{L_{K}}V)/\delta\log 2)+N+1]^{1/(1-\epsilon)}]
\end{align*}
not dividing $\Pi_V$. Plugging such a prime $p$ into (\ref{ineq}) concludes the proof.
\end{proof}

\bibliography{DegreeBound}
\bibliographystyle{alpha}

\end{document}